\newcommand{\wwt}{\operatorname{w}}
\newcommand{\vwt}{\operatorname{v}}
\newcommand{\tinyspace}{\vspace*{-2mm}}
\newcommand{\h}{\nu}
\font\Cp = msbm10
\newcommand{\Sym}{{\mathfrak S}}
\newcommand{\Rees}{\operatorname{Rees}}
\newcommand{\Cbasis}{{\mathscr C}}
\newcommand{\hz}{\hat{0}}
\newcommand{\ho}{\hat{1}}
\newcommand{\corank}{\operatorname{corank}}
\newcommand{\Fall}{{\cal F}}
\newcommand{\onethingatopanother}[2]{\genfrac{}{}{0pt}{}{#1}{#2}}
\newcommand{\signed}[1]{\Sym_{#1}^{\pm}}
\newcommand{\barredsigned}[1]{\overline{\Sym_{#1}^{\pm}}}
\newtheorem{theorem}{Theorem}[section]
\newtheorem{proposition}[theorem]{Proposition}
\newtheorem{lemma}[theorem]{Lemma}
\newtheorem{definition}[theorem]{Definition}
\newtheorem{corollary}[theorem]{Corollary}
\newtheorem{example}[theorem]{Example}
\newtheorem{claim}[theorem]{Claim}
\font\Cp = msbm10
\newcommand{\Ccc}{\hbox{\Cp C}}
\newcommand{\Ppp}{\hbox{\Cp P}}
\newcommand{\Qqq}{\hbox{\Cp Q}}
\newcommand{\Zzz}{\hbox{\Cp Z}}
\newcommand{\qed}{\mbox{$\Box$}\vspace{\baselineskip}}
\newenvironment{proof}{\noindent {\bf Proof:}}{{\qed}}
\newenvironment{proof_special}[1]{\noindent {\bf Proof of #1:}}{{\qed}}
\title{The Rees product of the cubical lattice and graded posets
with the chain}
\title{The Rees product of posets}
\author{\sc Patricia Muldoon Brown and 
Margaret A.\ Readdy\thanks{Part of this work 
was completed during the second author's 2006-2007 sabbatical at MIT.}}
\date{}
\begin{document}

\maketitle

\vspace*{-.3in}
\begin{center}
{\em Dedicated to Dennis Stanton
on the occasion of his 60th birthday}
\end{center}

\begin{abstract}
We determine how the flag $f$-vector
of any graded poset changes under the Rees product with
the chain, and more generally, any
$t$-ary tree.  As a corollary, the M\"obius function
of the Rees product of any graded poset with the chain,
and more generally, the $t$-ary tree,
is exactly the same as the Rees product of its dual with the chain,
respectively, $t$-ary chain.
We then
study enumerative and homological properties of the
Rees product of the cubical lattice with the
chain.
We give a bijective proof that the M\"obius function 
of this poset can
be expressed as $n$ times 
a signed derangement number.
From this we derive a new bijective proof of Jonsson's result that the
M\"obius function of the Rees product of the Boolean algebra
with the chain is given by a derangement number.
Using poset homology techniques
we find an explicit basis for the reduced
homology
and determine a representation
for the reduced homology of the order complex
of the Rees product of the cubical lattice
with the chain
over the symmetric group.

\vspace{.1in}
\noindent 
2010 Mathematics Subject Classification:
06A07,
05E10,
05A05.
\end{abstract}


\section{Introduction}
\label{section_introduction}
\setcounter{equation}{0}

Bj\"orner and Welker~\cite{Bjorner_Welker}
initiated a study to generalize
concepts from commutative algebra to
the area of poset 
topology.
Motivated by the ring-theoretic Rees algebra,
one of the new poset operations they define is the Rees product.

\begin{definition}
For two graded posets
$P$ and $Q$ 
with rank function $\rho$
the
{\em Rees product},
denoted
$P * Q$,
is the set of ordered pairs $(p,q)$ in the Cartesian
product $P \times Q$ with $\rho(p) \geq \rho(q)$.  
These pairs are
partially ordered by $(p,q) \leq (p',q')$ if 
$p \leq_{P} p'$, 
$q \leq_{Q} q'$,
and 
$\rho(p') - \rho(p) \geq \rho(q') - \rho(q)$.
\end{definition}
The rank of the resulting poset is 
$\rho(P * Q) = \rho(P)$.
For more details concerning the Rees product and
other poset products, see~\cite{Bjorner_Welker}.

From the perspective of
topological combinatorics,
one of the most important
results that 
Bj\"orner and Welker
show in their paper
is that
the poset theoretic Rees product 
preserves the 
Cohen-Macaulay property; see~\cite{Bjorner_Welker}.

\begin{theorem}
[Bj\"orner-Welker]
\label{theorem_Bjorner_Welker}
If $P$ and $Q$ are two Cohen-Macaulay posets then
so is the Rees product $P*Q$.
\end{theorem}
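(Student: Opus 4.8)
The plan is to prove that the Rees product preserves the Cohen-Macaulay property by exploiting the characterization of Cohen-Macaulayness in terms of the topology of the order complex, together with the structural relationship between the order complex of $P * Q$ and those of $P$ and $Q$. Recall that a graded poset is Cohen-Macaulay (over a field or $\Zzz$) precisely when, for every interval $[x,y]$, the order complex of the open interval $(x,y)$ has vanishing reduced homology below its top dimension. So I would reduce the global statement to a local one: it suffices to understand the homotopy type, or at least the homology, of the open intervals of $P * Q$.

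\medskip

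First I would analyze the intervals of the Rees product. Given $(p,q) < (p',q')$ in $P*Q$, I would show that the open interval factors in a controlled way, built from the open interval $(p,p')$ in $P$, the open interval $(q,q')$ in $Q$, and the combinatorial constraint $\rho(p')-\rho(p) \geq \rho(q')-\rho(q)$ that governs which pairs are comparable. The rank condition is what makes this subtle: unlike an ordinary or diamond product, the Rees product is not simply a product of the two factors, so the order complex is not a join and one cannot just invoke the homotopy-type formula for products. The key step is to identify a poset map or a sequence of closure/fiber operations (for instance, a projection $(p,q) \mapsto p$ onto the first coordinate) and to study its fibers. I would then apply a fiber theorem in the spirit of Quillen's fiber lemma, or Bj\"orner--Welker's own discrete-Morse/poset-fiber machinery, to glue together the Cohen-Macaulay fibers over a Cohen-Macaulay base.

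\medskip

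Concretely, the main engine would be a poset-fiber theorem: if $f \colon E \to B$ is a rank-preserving surjection of graded posets such that $B$ is Cohen-Macaulay and every fiber $f^{-1}(B_{\leq b})$ (the preimage of a principal order ideal, or the appropriate open-interval fiber) is Cohen-Macaulay of the correct dimension, then $E$ is Cohen-Macaulay. Taking $f$ to be projection of $P*Q$ onto $P$, the base $P$ is Cohen-Macaulay by hypothesis, and each fiber should, after unwinding the rank inequality, reduce to a shifted or truncated copy of an interval in $Q$ together with a chain-like contribution coming from the allowed rank jumps; these fibers are Cohen-Macaulay because $Q$ is. The homological computation needed to verify that each fiber has reduced homology concentrated in top degree is where the rank condition $\rho(p') - \rho(p) \geq \rho(q')-\rho(q)$ must be handled carefully.

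\medskip

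The hard part will be exactly this fiber analysis: verifying that the pieces cut out by the inequality retain the top-heavy homology required for Cohen-Macaulayness, and confirming that the chosen projection satisfies the dimension and connectivity hypotheses of the fiber theorem being invoked. An alternative, possibly cleaner, route is to bypass homology entirely and show that $P*Q$ is \emph{shellable} whenever $P$ and $Q$ are, by lifting an EL- or CL-labeling of the factors to a labeling of $P*Q$ compatible with the rank constraint; shellability implies Cohen-Macaulay and is often more transparent to verify on maximal chains. I would pursue the fiber-theorem approach as the primary strategy and keep the shelling argument as a fallback, since the former more directly mirrors the commutative-algebra motivation from which the Rees product arises.
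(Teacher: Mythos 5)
There is a genuine gap here, and also a point of context worth knowing: the paper you are reading does not prove this statement at all. Theorem~\ref{theorem_Bjorner_Welker} is imported as background from Bj\"orner and Welker's paper, and their argument is ring-theoretic, not poset-topological: they identify the Stanley--Reisner ring of the order complex of the Rees product with ring constructions (Segre products and Rees algebras) applied to the Stanley--Reisner rings of $\Delta(P)$ and $\Delta(Q)$, exploiting the balanced structure of order complexes of graded posets, and then transfer Cohen--Macaulayness by graded commutative algebra (local cohomology arguments of Goto--Watanabe type). So even if your plan were carried out, it would be a genuinely different route from the source, not a reconstruction of it.

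The gap in your proposal is that its decisive step is both deferred and mischaracterized. The poset fiber theorems you invoke (Quillen; Bj\"orner--Wachs--Welker) require control of the preimages of principal order ideals, $f^{-1}(P_{\leq p})$, not of the pointwise fibers $f^{-1}(p)$. The pointwise fiber of the projection $f\colon P*Q \to P$ is indeed a rank-truncation of $Q$, which is what your description (``a truncated copy of an interval in $Q$ together with a chain-like contribution'') captures; but the ideal preimage is
$f^{-1}(P_{\leq p}) = \{(p',q) : p' \leq p,\ \rho(q) \leq \rho(p')\}$,
which is itself the Rees product of the ideal $P_{\leq p}$ with a rank-truncation of $Q$ --- that is, exactly the kind of object whose Cohen--Macaulayness you are trying to establish. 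As written the argument is circular; at best it could be restructured as an induction, and then you would have to check that the induction actually closes (for instance, if $P$ has a unique maximal element $\hat{1}$, the fiber over $P_{\leq \hat{1}}$ is all of $P*Q$) and that the dimension hypotheses of the Cohen--Macaulay fiber theorem are satisfied --- precisely the content you defer as ``the hard part.'' Finally, the fallback you propose cannot rescue the theorem in its stated generality: showing that $P*Q$ is shellable whenever $P$ and $Q$ are shellable would only prove a strictly weaker statement, since Cohen--Macaulay posets need not be shellable, so the hypothesis of the theorem cannot be converted into an EL- or CL-labeling of the factors.
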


Very little is known about the Rees product of specific 
examples of Cohen-Macaulay
posets.  However, what has been studied 
has yielded rich
combinatorial results.
The first example in this vein
is due to
Jonsson~\cite{Jonsson},
who settled an open question of Bj\"orner and Welker
concerning the Rees product of the Boolean algebra with the chain.
For brevity, throughout we will use the notation
$\Rees(P,Q)$ to denote the Rees product
$$
   \Rees(P,Q) = ((P - \{\hz\}) * Q) \cup \{\hz, \ho\}.
$$
As usual, we will assume that $P$ and $Q$ are graded posets with
$P$ having unique 
minimal element $\hz$
and unique maximal element $\ho$.

\begin{theorem}
[Jonsson]
\label{theorem_Jonsson}
The M\"obius function of the Rees product of the
Boolean algebra $B_n$ on $n$ elements with the $n$ element chain
$C_n$
is given by 
the $n$th derangement number, that is,
$$
  \mu(\Rees(B_n,C_n)) =  (-1)^{n+1} \cdot D_n.
$$
\end{theorem}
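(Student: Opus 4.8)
The plan is to extract $\mu(\Rees(B_n,C_n))$ directly from the recursive definition of the M\"obius function, reducing the computation to the M\"obius values of the lower intervals and then resumming via inclusion--exclusion.

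First I would coordinatize. Normalizing so that the atoms of $B_n\setminus\{\hz\}$ sit at rank $0$, every element of $\Rees(B_n,C_n)$ other than $\hz$ and $\ho$ is a pair $(S,i)$ with $\emptyset\neq S\subseteq[n]$ and $1\le i\le|S|$, and
\[
  (S,i)\le(S',i')\quad\Longleftrightarrow\quad S\subseteq S'\ \text{ and }\ i\le i'\le i+|S'|-|S|.
\]
In particular $(S,i)$ and $(S,i')$ are incomparable for $i\neq i'$, so every element strictly below $(S,i)$ has the form $(T,k)$ with $T\subsetneq S$. By $\Sym_n$-symmetry the value $\mu(\hz,(S,i))$ depends only on $r=|S|$ and $i$; I will not need the individual values (they turn out to be signed Eulerian numbers) but only the row sums $\Sigma_r:=\sum_{i=1}^{r}\mu(\hz,(S,i))$ for a fixed $S$ of size $r$. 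Expanding $\mu(\hz,\ho)=-\sum_{x<\ho}\mu(\hz,x)$ and grouping the pairs $(S,i)$ by $|S|$ gives
\[
  \mu(\Rees(B_n,C_n))=-1-\sum_{r=1}^{n}\binom{n}{r}\,\Sigma_r .
\]

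The heart of the argument is a recursion for $\Sigma_r$. Applying $\mu(\hz,(S,i))=-1-\sum_{(T,k)<(S,i)}\mu(\hz,(T,k))$ and summing over $i=1,\dots,r$, I would count, for each fixed size $t=|T|<r$ and each admissible level $k$, the number of $i$ for which $(T,k)<(S,i)$. The order relation forces $k\le i\le k+(r-t)$, an interval of length $r-t+1$ lying inside $\{1,\dots,r\}$ for every $k$, so each value $\mu(\hz,(T,k))$ is counted exactly $r-t+1$ times. Since there are $\binom{r}{t}$ choices of $T\subsetneq S$ of size $t$, this yields
\[
  \Sigma_r=-r-\sum_{t=1}^{r-1}\binom{r}{t}\,(r-t+1)\,\Sigma_t .
\]
I expect this bookkeeping of the rank--difference condition --- verifying that the admissible band of levels $k$ always has the clean width $r-t+1$ independently of $k$ --- to be the main obstacle, since it is exactly where the Rees order (as opposed to the ordinary product order) enters.

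Granting the recursion, an easy induction shows $\Sigma_r=(-1)^{r}\,r!$: the base case $\Sigma_1=-1$ is immediate, and the inductive step reduces to the identity $\sum_{t=0}^{r}(-1)^{t}\binom{r}{t}(r-t+1)\,t!=1$, which telescopes after writing $\frac{s+1}{s!}=\frac{1}{(s-1)!}+\frac{1}{s!}$ with $s=r-t$. Substituting $\Sigma_r=(-1)^r r!$ and using $\binom{n}{r}r!=n!/(n-r)!$ then collapses the sum to
\[
  \mu(\Rees(B_n,C_n))=-\sum_{r=0}^{n}(-1)^{r}\frac{n!}{(n-r)!}
  =-(-1)^{n}\sum_{k=0}^{n}(-1)^{k}\frac{n!}{k!}
  =(-1)^{n+1}D_n,
\]
the middle step being the reindexing $k=n-r$ and the last the classical inclusion--exclusion formula $D_n=\sum_{k=0}^{n}(-1)^k n!/k!$ for the derangement number. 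This completes the proposed proof.
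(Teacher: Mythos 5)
Your proof is correct, but it takes a genuinely different route from the paper's. The paper derives Jonsson's theorem as a corollary of its cubical-lattice machinery: it observes that $\Rees(B_n,C_n)$ is isomorphic to the upper order ideal generated by an atom of $\Rees(\mathscr{C}_n,C_{n+1})$, so it inherits that poset's $R$-labeling; the falling chains are then encoded by unsigned barred permutations, equivalently by standard skew diagrams whose hooks have size at least two, and the explicit bijection of Theorem~\ref{theorem_bijection_fixed_point} between such diagrams and fixed-point-free permutations finishes the count. Your argument instead computes $\mu$ directly from its defining recursion: you coordinatize $\Rees(B_n,C_n)$, use $\Sym_n$-symmetry to reduce to the rank-level sums $\Sigma_r$, verify the key bookkeeping point that the band of admissible levels always has width $r-t+1$ (which holds since $1\le k\le t$ forces $[k,k+r-t]\subseteq[1,r]$), and solve the resulting recursion by induction. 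It is worth noting that your inductive identity $\sum_{t=0}^{r}(-1)^{t}\binom{r}{t}(r-t+1)\,t!=1$ is precisely the paper's Lemma~\ref{lemma_convolution}, and your overall strategy closely parallels the paper's Proposition~\ref{proposition_recursion} for the cubical lattice --- except that you work with lower intervals $\mu(\hz,\cdot)$ rather than upper intervals $\mu(\cdot,\ho)$, and you prove the identity by telescoping rather than by exponential generating functions. What each approach buys: yours is elementary and entirely self-contained, needing no $R$-labeling theory; the paper's is longer but explains the answer bijectively (the falling chains literally are counted by derangements), and that extra structure is what the paper later exploits to build homology bases and the $\Sym_n$-representation.
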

Recall the $n$th derangement number $D_n$ is the number of
permutations in the symmetric group $\Sym_n$ on $n$ elements having no
fixed points.
Classically
$D_n = \lfloor \frac{n!}{e} \rceil$ for $n \geq 1$ where
$\lfloor \cdot \rceil$ denotes the nearest integer function.
Jonsson's original proof uses
an non-acyclic element matching
to show the Euler characteristic vanishes appropriately.

The paper is organized as follows.
In the next section we begin by expressing the
flag $f$-vector of the Rees product of any graded
poset with a $t$-ary tree in terms of the flag $f$-vector of
the original poset.  We obtain the surprising conclusion that the
M\"obius function of the poset with the tree coincides
with the M\"obius function of its dual with the tree.
We then
study the signed version of Jonsson's results,
that is,
the Rees product of the 
rank $n+1$ cubical lattice $\mathscr{C}_n$,
(i.e., the face lattice of the $n$-dimensional cube)
with the $n+1$ element chain $C_{n+1}$.
Using poset techniques, we give explicit formulas for the
for the M\"obius function
of $\Rees({\mathscr C}_n, C_{n+1})$
and show its
M\"obius function equals $(-1)^n \cdot n \cdot D_{n-1}^{\pm}$.
Here 
$D_n^{\pm}$ is the signed derangement number with
$D_n^{\pm} = \lceil \frac{2^{n-1} (n-1)!}{\sqrt{e}}\rfloor$
for $n \geq 1$.
As a corollary to our enumerative results, we give an
explicit bijective 
proof of Jonsson's theorem.
We then find an explicit basis for the reduced
homology of the order complex 
of $\Rees(\mathscr{C}_n,C_{n+1})$
and determine a representation
of the reduced homology of
this order complex 
over the symmetric group. 
In the last section we end with further questions.

\section{Rees product of graded posets with a tree}
\setcounter{equation}{0}

In this section we determine the flag vector of 
the Rees product of any graded
poset~$P$ with a $t$-ary tree.
As a consequence we show the M\"obius function of the Rees products
$\Rees(P,T_{t,n+1})$ and $\Rees(P^*,T_{t,n+1})$ coincide,
although the posets are not isomorphic in general.

For nonnegative integers $n$ and $t$, let
$T_{t,n+1}$ be the poset corresponding to
a  $t$-ary tree of rank $n$,
that is,
the poset consisting of $t^k$ elements of rank $k$
for $0 \leq k \leq n$ with each nonleaf element
covered by exactly $t$ children.
Observe that the $1$-ary tree
$T_{1,n+1}$ is precisely the $(n+1)$-chain $C_{n+1}$.
Recall for a graded poset $P$ of rank $n+1$
and $S = \{s_1, \ldots, s_k\} \subseteq \{1, \ldots, n\}$
with $s_1 < \cdots < s_k$,
the {\em flag $f$-vector}
$f_S = f_S(P)$ is the
number of chains
$\hz < x_1 < \cdots < x_k < \ho$
with $\rho(x_i) = s_i$.

We now define two weight functions.
Here we use the notation
$[k]$ to denote the $t$-analogue of the
nonnegative integer $k$, i.e.,
$[k] = 1 + t + \cdots + t^{k-1}$.

\begin{definition}
For a nonempty subset $S = \{s_1 < \cdots < s_k\} \subseteq \Ppp$
define 
$$ 
    \wwt(S) = [s_1] \cdot [s_2 - s_1 +1] \cdots [s_k - s_{k-1} +1]
$$
with $\wwt(\emptyset) = 1$.
For a nonempty subset
$S = \{s_1 < \cdots < s_k\} \subseteq \{1, \ldots, n\}$
define 
$$ 
    \vwt(S) = \wwt(S \cup \{n+1\}) - \wwt(S)
            = t \cdot \wwt(S) \cdot [(n+1) - s_k] 
$$
with $\vwt(\emptyset) = t \cdot [n]$.
\end{definition}

\begin{lemma}
\label{lemma_flag_weights}
For a graded poset $P$ of rank $n+1$, let
$R = \Rees(P,T_{t,n+1})$.
Then the flag $f$-vector of the poset
$R$ is given by
\begin{eqnarray}
    f_S(R) &=& \wwt(S) \cdot f_S(P),  \label{equation_usual_ranks}\\
    f_{S \cup \{n+1\}}(R) &=& \wwt(S \cup \{n+1\}) \cdot f_S(P),
                            \label{equation_extra_rank}
\end{eqnarray}
for $S \subseteq \{1, \ldots, n\}$.
\end{lemma}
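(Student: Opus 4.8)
The plan is to enumerate the chains of $R = \Rees(P, T_{t,n+1})$ directly and to show the count splits multiplicatively into a $P$-factor and a tree-factor. Fix a rank set $\{s_1 < \cdots < s_k\} \subseteq \{1, \ldots, n+1\}$ and consider a chain $\hz < z_1 < \cdots < z_k < \ho$ in $R$ with $\rho(z_i) = s_i$. Writing $z_i = (p_i, q_i)$ with $p_i \in P - \{\hz\}$, $q_i \in T_{t,n+1}$, and setting $r_i = \rho(q_i)$, the fact that the rank of $(p,q)$ in $R$ equals the $P$-rank $\rho(p)$ forces $\rho(p_i) = s_i$; thus the first coordinates form a chain $\hz <_P p_1 <_P \cdots <_P p_k$ with prescribed ranks and the second coordinates a multichain $q_1 \leq_{T} \cdots \leq_{T} q_k$. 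The number of admissible $p$-chains equals $f_{\{s_1,\ldots,s_k\}}(P)$ when $s_k \leq n$, while if $s_k = n+1$ then $p_k$ is forced to be the top element of $P$ and the count is $f_{\{s_1,\ldots,s_{k-1}\}}(P)$; this is exactly why \eqref{equation_extra_rank} carries the factor $f_S(P)$ rather than $f_{S \cup \{n+1\}}(P)$. Since the conditions relating $q_i$ to $p_i$ involve $p_i$ only through its rank $s_i$, the set of admissible tree-multichains is the same for every $p$-chain with these ranks, so $f_S(R)$ factors as (number of $p$-chains) $\times\, N$, where $N$ is the number of admissible multichains; it remains to prove $N = \wwt(\{s_1,\ldots,s_k\})$.

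The key structural fact is that in a $t$-ary tree the principal order ideal below any element is a chain. Hence a multichain $q_1 \leq \cdots \leq q_k$ with prescribed ranks $r_1 \leq \cdots \leq r_k$ is determined by its top element $q_k$ alone — each $q_i$ is the unique ancestor of $q_k$ at level $r_i$ — and since there are $t^{r_k}$ elements at level $r_k$, the number of such multichains is $t^{r_k}$. Therefore
$$ N = \sum_{(r_1, \ldots, r_k)} t^{r_k}, $$
summed over the admissible rank sequences. I would then translate the Rees relations into admissibility conditions on $(r_1,\ldots,r_k)$. Regarding the atoms of $P$ as sitting at rank $0$ in $P - \{\hz\}$, so that its rank function is $\rho - 1$, the membership condition $\rho(p_i) - 1 \geq \rho(q_i)$ becomes $r_i \leq s_i - 1$; the order relation in the Rees product gives $0 \leq r_{i+1} - r_i \leq s_{i+1} - s_i$; and the bound $r_k \leq n$ forced by $\rho(T_{t,n+1}) = n$ is non-binding, since $r_k \leq s_k - 1 \leq n$ in every case.

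The last step factors the weighted sum. Passing to gap variables $d_1 = r_1$ and $d_i = r_i - r_{i-1}$ for $i \geq 2$, the admissibility conditions become the independent ranges $0 \leq d_1 \leq s_1 - 1$ and $0 \leq d_i \leq s_i - s_{i-1}$; one checks the pointwise bounds $r_i \leq s_i - 1$ are then automatic, so nothing is lost. Because $r_k = d_1 + \cdots + d_k$, the sum factors as
$$ N = \Bigl( \sum_{d_1 = 0}^{s_1 - 1} t^{d_1} \Bigr) \prod_{i=2}^{k} \Bigl( \sum_{d_i=0}^{s_i - s_{i-1}} t^{d_i} \Bigr) = [s_1] \cdot \prod_{i=2}^{k} [s_i - s_{i-1} + 1] = \wwt(\{s_1,\ldots,s_k\}), $$
which yields \eqref{equation_usual_ranks}; the identical computation with $s_k = n+1$ produces the last factor $[(n+1) - s_{k-1} + 1]$ and hence \eqref{equation_extra_rank}. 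I expect the main obstacle to be pinning down the rank normalization — in particular the $-1$ shift giving $r_i \leq s_i - 1$, which is precisely what makes the leading factor $[s_1]$ rather than $[s_1+1]$ — together with recognizing that the ideal-is-a-chain property of trees is what collapses the multichain count to the single monomial $t^{r_k}$ and drives the whole factorization.
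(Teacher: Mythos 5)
Your proposal is correct and takes essentially the same approach as the paper: both decompose a chain of $R$ into its $P$-chain and tree-multichain coordinates and show that the tree-side count, which depends only on the rank set, factors as $[s_1]\cdot\prod_{i=2}^{k}[s_i-s_{i-1}+1]$. Your global parametrization (summing $t^{r_k}$ over admissible rank sequences and factoring via gap variables) is simply a more explicit rendering of the paper's step-by-step count, which observes that there are $[s_1]$ copies of the bottom element and that each rank-$s_i$ element of $R$ admits exactly $[s_{i+1}-s_i+1]$ extensions at the next rank.
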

\begin{proof}
Consider first $S = \{s_1 < \cdots < s_k\} \subseteq \{1, \ldots, n\}$.
Given an element $x_1$ of rank $\rho(x_1) = s_1$
from the poset $P$,
there are $[s_1]$ copies of it
in the Rees poset~$R$.
Each of these copies has $[s_2 - s_1 + 1]$
elements in $R$ of rank $s_2$ which are greater than it
with respect to the partial order of the Rees poset~$R$.
In general, each rank $s_i$ element
in $R$ has
$[s_{i+1} - s_i +1]$ elements greater than it in the
Rees poset~$R$.
Hence relation~(\ref{equation_usual_ranks}) holds.

To show~(\ref{equation_extra_rank}), note the maximal
element $\ho$ of $P$ gets mapped to the $[n+1]$ coatoms
of the Rees poset $R$. 
In particular $[(n+1) - s_{k} + 1]$ of these elements
will cover a given element  of rank $s_k$ in~$R$.
Hence the result follows.
\end{proof}

\begin{lemma}
\label{lemma_mu_Rees_poset}
For a graded poset $P$ of rank $n+1$, let
$R = \Rees(P,T_{t,n+1})$.
Then
$$
     \mu(R) = \sum_{S \subseteq \{1, \ldots, n\}} 
                    (-1)^{|S|} \cdot \vwt(S) \cdot f_S(P).
$$
\end{lemma}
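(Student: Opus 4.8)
The plan is to recover $\mu(R)$ from its flag $f$-vector via Philip Hall's theorem and then substitute the formulas of Lemma~\ref{lemma_flag_weights}. First I would recall that for any graded poset $R$ whose proper ranks form the set $\{1, \ldots, N-1\}$, Philip Hall's theorem gives $\mu(R) = \sum_{k \geq 0} (-1)^k c_k$, where $c_k$ counts the chains $\hz = y_0 < y_1 < \cdots < y_k = \ho$ of length $k$. Grouping such chains by the rank set $T = \{\rho(y_1), \ldots, \rho(y_{k-1})\}$ of their interior, which has size $k-1$, turns this into
$$\mu(R) = \sum_{T \subseteq \{1, \ldots, N-1\}} (-1)^{|T|+1} f_T(R).$$
Lemma~\ref{lemma_flag_weights} shows that the nonzero flag entries of $R$ are indexed by the subsets of $\{1, \ldots, n+1\}$, so here the interior rank set of $R$ is $\{1, \ldots, n+1\}$ and $T$ ranges over its subsets.

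Next I would split the sum according to whether the top interior rank $n+1$ lies in $T$, writing $T = S$ with $S \subseteq \{1,\ldots,n\}$ when $n+1 \notin T$, and $T = S \cup \{n+1\}$ with $S \subseteq \{1,\ldots,n\}$ when $n+1 \in T$; every subset of $\{1,\ldots,n+1\}$ arises exactly once this way. Feeding in~(\ref{equation_usual_ranks}) and~(\ref{equation_extra_rank}), the first family contributes $(-1)^{|S|+1}\wwt(S)\,f_S(P)$, and the second, since $|S \cup \{n+1\}| = |S|+1$, contributes $(-1)^{|S|}\wwt(S \cup \{n+1\})\,f_S(P)$.

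Finally I would gather the coefficient of each $f_S(P)$:
$$(-1)^{|S|+1}\wwt(S) + (-1)^{|S|}\wwt(S\cup\{n+1\}) = (-1)^{|S|}\bigl(\wwt(S\cup\{n+1\}) - \wwt(S)\bigr) = (-1)^{|S|}\vwt(S),$$
where the last equality is the very definition of $\vwt(S)$. Summing over $S \subseteq \{1,\ldots,n\}$ then delivers the stated identity. The computation is routine bookkeeping; the only places demanding care are nailing down the sign in Hall's formula (I would check it against a small case such as $R = B_2$, where it must return $\mu = 1$) and confirming that adjoining the new top element $\ho$ places the images of $\hat 1 \in P$ at interior rank $n+1$ rather than at the very top---precisely the structural fact already encoded in the indexing of Lemma~\ref{lemma_flag_weights}.
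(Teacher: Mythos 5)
Your proposal is correct and follows essentially the same route as the paper: apply Philip Hall's theorem to $R$ (whose interior ranks are $\{1,\ldots,n+1\}$), split the subsets according to whether they contain $n+1$, substitute the two formulas of Lemma~\ref{lemma_flag_weights}, and combine the two sums using the definition $\vwt(S) = \wwt(S\cup\{n+1\}) - \wwt(S)$. The sign bookkeeping $(-1)^{|T|+1} = (-1)^{|T|-1}$ matches the paper's convention, so nothing is missing.
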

\begin{proof}
By Philip Hall's theorem, we have
\begin{eqnarray*}
     \mu(R) &=& \sum_{S \subseteq \{1, \ldots, n+1\}} (-1)^{|S|-1} f_S(R)\\
            &=& \sum_{S \subseteq \{1, \ldots, n\}} (-1)^{|S|-1} f_S(R)
             +  \sum_{S \subseteq \{1, \ldots, n\} }
                     (-1)^{|S|} f_{S \cup \{n+1\}}(R)\\
            &=& \sum_{S \subseteq \{1, \ldots, n\} }
                     (-1)^{|S|-1} \wwt(S) \cdot f_S(P)
             +  \sum_{S \subseteq \{1, \ldots, n\} }
                     (-1)^{|S|} \wwt(S \cup \{n+1\}) \cdot f_S(P),
\end{eqnarray*}
where we have expanded the flag $f$-vector of 
the poset $R$ 
using Lemma~\ref{lemma_flag_weights}.
Combining the two sums proves the desired identity.
\end{proof}

\begin{theorem}
\label{theorem_poset_and_dual_with_tree}
For a graded poset $P$ of rank $n+1$ we have
$$
   \mu(\Rees(P,T_{t,n+1})) =    \mu(\Rees(P^*,T_{t,n+1})), 
$$
where $P^*$ is the dual of $P$.
In particular, for the chain on $n+1$ elements we have
$$
   \mu(\Rees(P,C_{n+1})) =    \mu(\Rees(P^*,C_{n+1})). 
$$
\end{theorem}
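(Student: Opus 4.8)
The plan is to evaluate both sides with Lemma~\ref{lemma_mu_Rees_poset} and to reduce the asserted equality to a single symmetry of the weight $\vwt$. First I would record the effect of dualization on the flag $f$-vector. Since $P$ has rank $n+1$, the rank function of $P^*$ carries a rank-$i$ element to rank $(n+1)-i$; thus a chain $\hz < x_1 < \cdots < x_k < \ho$ in $P^*$ passing through ranks $s_1 < \cdots < s_k$ corresponds to a chain in $P$ passing through ranks $(n+1)-s_k < \cdots < (n+1)-s_1$. Writing $S' = \{(n+1)-s : s \in S\}$ for the reversal of a subset $S \subseteq \{1,\ldots,n\}$, this gives
$$
   f_S(P^*) = f_{S'}(P),
$$
and $S \mapsto S'$ is a cardinality-preserving involution on the subsets of $\{1,\ldots,n\}$.

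Next, applying Lemma~\ref{lemma_mu_Rees_poset} to $P^*$, substituting $f_S(P^*) = f_{S'}(P)$, and reindexing the sum by the involution $S \mapsto S'$, I would obtain
$$
   \mu(\Rees(P^*,T_{t,n+1}))
      = \sum_{S \subseteq \{1,\ldots,n\}} (-1)^{|S|} \cdot \vwt(S') \cdot f_S(P).
$$
Comparing this with the formula of Lemma~\ref{lemma_mu_Rees_poset} for $\mu(\Rees(P,T_{t,n+1}))$, the theorem reduces to the weight identity $\vwt(S') = \vwt(S)$ for all $S \subseteq \{1,\ldots,n\}$.

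To establish this identity I would encode $S = \{s_1 < \cdots < s_k\}$ by the composition $(s_1,\, s_2-s_1,\, \ldots,\, s_k-s_{k-1},\, (n+1)-s_k)$ of $n+1$ into parts $c_0,\ldots,c_k$, and check directly that the reversal $S'$ is encoded by the reverse composition $(c_k,\ldots,c_0)$. Since $\vwt(S) = t\cdot\wwt(S)\cdot[(n+1)-s_k]$, one computes
$$
   \vwt(S) = t \cdot [c_0] \cdot [c_k] \cdot \prod_{i=1}^{k-1}[c_i+1],
$$
an expression manifestly invariant under reversing the composition, since the outer factors $[c_0]$ and $[c_k]$ are interchanged and the interior product is merely reindexed. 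Hence $\vwt(S')=\vwt(S)$; the empty set is fixed by the involution, with $\vwt(\emptyset)=t\cdot[n]$ trivially self-dual. Specializing to $t=1$, where $T_{1,n+1}=C_{n+1}$, yields the chain statement.

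The argument is short, and the only real obstacle is the weight bookkeeping in the last paragraph: one must verify carefully that reversing the gap sequence induced by $S$ exchanges the two ``end'' factors $[c_0]$ and $[c_k]$ while leaving each interior $t$-analogue $[c_i+1]$ unchanged, so that the asymmetry between $\wwt(S)$ and the extra factor $[(n+1)-s_k]$ appearing in $\vwt$ resolves into a product symmetric under reversal.
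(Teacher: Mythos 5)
Your proposal is correct and follows essentially the same route as the paper: the authors also apply Lemma~\ref{lemma_mu_Rees_poset}, combine it with the duality of flag $f$-vectors, and reduce everything to the symmetry $\vwt(S) = \vwt(S^{\rm rev})$. Your composition encoding $(c_0,\ldots,c_k)$ simply supplies the verification of that weight symmetry which the paper leaves to the reader.
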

\begin{proof}
Let $S = \{s_1 < \cdots < s_k\} \subseteq \{1, \ldots, n\}$.
The result follows by noting
that
$\vwt(S) = \vwt(S^{\rm rev})$,
where the reverse of $S$ is
$S^{\rm rev} = \{n + 1 - s_k, n+1 - s_{k-1},  \ldots,  n+1 - s_1\}$
and applying
Lemma~\ref{lemma_mu_Rees_poset}.
\end{proof}

It is clear from the definition of the weight
$\vwt(S)$ that the M\"obius function $\mu(\Rees(P,T_{t,n+1}))$ 
is divisible by $t$. When the poset has odd rank
we can say more.
\begin{corollary}
For a graded poset $P$ of odd rank $n+1$,
the M\"obius function $\mu(\Rees(P,T_{t,n+1}))$ 
is divisible by
$[2] = 1 + t$.  
In particular,
for a graded poset $P$ of odd rank $n+1$,
the M\"obius function $\mu(\Rees(P,C_{n+1}))$ 
is even.
\end{corollary}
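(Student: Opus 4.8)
The plan is to reduce the claimed divisibility to a single evaluation. By Lemma~\ref{lemma_mu_Rees_poset}, the quantity $\mu(\Rees(P,T_{t,n+1}))$ is a polynomial in $t$ with integer coefficients, since each flag number $f_S(P)$ is an integer and each weight $\vwt(S)$ lies in $\Zzz[t]$. Because $[2]=1+t$ is monic, division with remainder over $\Zzz$ shows that $(1+t)$ divides $\mu(\Rees(P,T_{t,n+1}))$ in $\Zzz[t]$ precisely when this polynomial vanishes at $t=-1$. So the entire task reduces to verifying that $\mu(\Rees(P,T_{t,n+1}))$ evaluates to $0$ at $t=-1$ whenever the rank $n+1$ is odd, equivalently $n$ is even.

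First I would record the elementary fact that at $t=-1$ the $t$-integer $[k]=1+t+\cdots+t^{k-1}$ equals $0$ when $k$ is even and $1$ when $k$ is odd. Applying this to $\wwt(S)=[s_1]\cdot[s_2-s_1+1]\cdots[s_k-s_{k-1}+1]$ shows that $\wwt(S)$ is nonzero at $t=-1$ only if every factor argument is odd; as $s_1$ and each difference $s_i-s_{i-1}+1$ are then odd, a short induction forces all elements of $S$ to be odd. Now fix a nonempty $S=\{s_1<\cdots<s_k\}\subseteq\{1,\dots,n\}$ and use the closed form $\vwt(S)=t\cdot\wwt(S)\cdot[(n+1)-s_k]$. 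I would split on the parity of the top element $s_k$: if $s_k$ is even then $\wwt(S)$ already vanishes at $t=-1$ by the previous remark, while if $s_k$ is odd then $(n+1)-s_k$ is even (since $n+1$ is odd), so the factor $[(n+1)-s_k]$ vanishes at $t=-1$. Either way $\vwt(S)$ vanishes at $t=-1$. The empty set is treated separately via $\vwt(\emptyset)=t\cdot[n]$, which also vanishes at $t=-1$ because $n$ is even.

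Feeding these vanishings into the sum of Lemma~\ref{lemma_mu_Rees_poset} gives $\mu(\Rees(P,T_{t,n+1}))=0$ at $t=-1$, hence $(1+t)\mid\mu(\Rees(P,T_{t,n+1}))$ in $\Zzz[t]$, which is the first assertion. For the ``in particular'' statement I would specialize to the chain $C_{n+1}=T_{1,n+1}$, that is $t=1$, where $[2]=2$: writing $\mu(\Rees(P,T_{t,n+1}))=(1+t)\cdot g(t)$ with $g\in\Zzz[t]$ yields $\mu(\Rees(P,C_{n+1}))=2\cdot g(1)$, an even integer. I expect no serious obstacle here; the only point requiring care is the parity bookkeeping that pins down exactly when $\wwt(S)$ survives the substitution $t=-1$ (namely when all elements of $S$ are odd), together with the correct handling of the empty-set term and the explicit factor of $t$. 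Everything else is the routine monic-divisor reduction.
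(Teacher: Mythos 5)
Your proposal is correct and takes essentially the same route as the paper: the paper's proof also rests on the observation that $1+t$ divides $[k]$ if and only if $k$ is even, and then runs the identical parity analysis on $\vwt(S) = t\cdot \wwt(S)\cdot[(n+1)-s_k]$ to conclude every term of the sum in Lemma~\ref{lemma_mu_Rees_poset} is divisible by $1+t$ when $n$ is even. Your only cosmetic difference is packaging divisibility by the monic factor $1+t$ as vanishing at $t=-1$, which is equivalent to the paper's term-by-term divisibility argument.
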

\begin{proof}
Observe that $1+t$ divides $[k]$ if and only if
$k$ is even.
Hence $1+t$ does not divide
$\vwt(S)$ for a
set $S = \{s_{1} < \cdots < s_{k}\}$
implies that
$s_{1}$ is odd,
$s_{i}$ has the same parity as $s_{i+1}$
and $n+1-s_{k}$ is odd.
This implies that $n$ is odd.
Hence that $n$ is even implies that
the weight $\vwt(S)$ is divisible by $1 + t$  for all
subsets $S$, including the empty set.
Thus by
Lemma~\ref{lemma_mu_Rees_poset}
the M\"obius function of
$\Rees(P,T_{t,n+1})$ is divisible by $1 + t$.
\end{proof}

\section{Rees product of the cubical lattice with the chain}
\setcounter{equation}{0}

In this section we give an explicit formula for the M\"obius function
of the poset $\Rees({\mathscr C}_n, C_{n+1})$.
After finding an $R$-labeling
in Section~\ref{section_R_labeling}, we relate the M\"obius function
with a class of permutations, that is,
the double augmented barred signed permutations.
These are in a one-to-one correspondence with certain
skew diagrams.
We will return to these 
when we consider homological questions
for $\Rees({\mathscr C}_n, C_{n+1})$.
In Section~\ref{section_bijection} we give a bijective
proof of the M\"obius function result expressed
as a permanent of a certain matrix.

We represent an element 
$(x,i) \in \Rees({\mathscr C}_n, C_{n+1}) - \{\hz, \ho\}$ 
as an ordered pair
where
the $n$-tuple
$x = (x_1,x_2,\ldots,x_n)\in \{0,1,*\}^n$
and
$i \in \{1, \ldots, n\}$.  
Observe that such an element $(x,i)$
has rank $k$ if there are
exactly $k-1$ stars appearing in its first coordinate,
$1 \leq i \leq k$.

For a graded poset $P$ with minimal element $\hz$ and
maximal element $\ho$, throughout we will use
the shorthand $\mu(P)$ to denote
the M\"obius function
$\mu_P([\hz,\ho])$.

Proposition~\ref{proposition_recursion}
gives an explicit formula for the M\"obius function
of the poset $\Rees({\mathscr C}_n, C_{n+1})$.
The proof will require
the following lemma.

\begin{lemma}
\label{lemma_convolution}
The following 
identity holds:
$$
     1 + \sum_{k=0}^{n} {n \choose k} (-1)^{k+1} k! (n-k+1) = 0.
$$
\end{lemma}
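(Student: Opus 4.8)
The plan is to collapse the entire expression into a single truncated alternating exponential series and evaluate that by telescoping; conceptually the identity is nothing more than the derangement recurrence $D_n = n D_{n-1} + (-1)^n$ in disguise, since the partial sums of $\sum (-1)^j/j!$ are normalized derangement numbers. I would therefore steer the algebra toward exposing such partial sums.

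First I would clear the binomial coefficient and factorial by writing $\binom{n}{k} k! = n!/(n-k)!$, and denote the sum on the left by $S = \sum_{k=0}^{n} (-1)^{k+1}\frac{n!}{(n-k)!}(n-k+1)$, so that the goal becomes $S = -1$. Reindexing by $j = n-k$ and pulling out the common sign and factorial, this rewrites as $S = (-1)^{n+1}\, n!\sum_{j=0}^{n}\frac{(-1)^j(j+1)}{j!}$, reducing the whole problem to evaluating the single sum $T = \sum_{j=0}^{n}\frac{(-1)^j(j+1)}{j!}$.

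The key step is the evaluation of $T$. Using $(j+1)/j! = 1/(j-1)! + 1/j!$, with the $j=0$ contribution of the first fraction understood to vanish, $T$ splits as $-\sum_{i=0}^{n-1}\frac{(-1)^i}{i!} + \sum_{j=0}^{n}\frac{(-1)^j}{j!}$; every term of the second sum except the last cancels against the first, leaving $T = (-1)^n/n!$. Substituting back yields $S = (-1)^{n+1}\, n!\cdot (-1)^n/n! = -1$, so $1 + S = 0$, as claimed. I would also remark that, since $\sum_{j=0}^{m}(-1)^j/j! = D_m/m!$, the evaluation $T = (-1)^n/n!$ is literally the derangement recurrence $D_n = nD_{n-1} + (-1)^n$ after clearing denominators, tying the lemma back to the combinatorial theme of the paper.

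The computation is elementary, so the only genuine obstacle is bookkeeping: choosing the reindexing $j = n-k$ that converts the falling factorials $n!/(n-k)!$ into the reciprocal factorials $n!/j!$ on which the exponential partial sums are built, and carefully handling the boundary contributions (the $k=n$, equivalently $j=0$, terms) so that the telescoping is clean. If one prefers to avoid the explicit telescoping, a short induction on $n$ using the same two-term split $(j+1)/j! = 1/(j-1)! + 1/j!$ works just as well and removes any lingering indexing concerns.
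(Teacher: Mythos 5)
Your proof is correct, but it takes a genuinely different route from the paper's. The paper treats the sum as a binomial convolution of the sequences $a_k = (-1)^{k+1}k!$ and $b_k = k+1$, computes their exponential generating functions $A(x) = -\frac{1}{1+x}$ and $B(x) = (1+x)e^x$, and reads off the identity from the product $A(x)B(x) = -e^x$, whose coefficient of $x^n/n!$ is $-1$ for every $n$. You instead work entirely at the level of the finite sum: clearing $\binom{n}{k}k! = n!/(n-k)!$, reindexing by $j = n-k$ to reduce everything to $T = \sum_{j=0}^{n}\frac{(-1)^j(j+1)}{j!}$, and then evaluating $T = (-1)^n/n!$ by the split $\frac{j+1}{j!} = \frac{1}{(j-1)!} + \frac{1}{j!}$ and telescoping; your boundary handling at $j=0$ and the final substitution $S = (-1)^{n+1}n!\cdot(-1)^n/n! = -1$ are all in order. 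The trade-off: your argument is more elementary and self-contained (no generating functions), and your closing remark correctly identifies $T = (-1)^n/n!$ as the derangement recurrence $D_n = nD_{n-1} + (-1)^n$ in disguise, which ties the lemma to the derangement numbers pervading the paper more transparently than the paper's own proof does. The paper's EGF computation, on the other hand, exposes the structural reason for the cancellation --- the factor $(1+x)$ in $B(x)$ exactly cancels the pole of $A(x)$ --- and generalizes with no extra effort to other convolutions of the same shape.
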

\begin{proof}
Define sequences $(a_n)_{n \geq 0}$
and
$(b_n)_{n \geq 0}$ by 
$a_n = (-1)^{n+1} n!$ and 
$b_n = n+1$.
These sequences have exponential generating functions
\begin{eqnarray*}
     A(x) = \sum_{n \geq 0} (-1)^{n+1} x^n = - \frac{1}{1+x}
\end{eqnarray*}
and
\begin{eqnarray*}
     B(x) = \sum_{n \geq 0} (1+n) \frac{x^n}{n!}= (1+x)e^x.
\end{eqnarray*}
Thus, $D(x)=A(x)B(x)=-e^x$.  But
\begin{eqnarray*}
     D(x) &=& \sum_{n\geq 0} 
              \sum_{k=0}^{n} {n \choose k}a_k b_{n-k} \frac{x^n}{n!}\\
          &=& \sum_{n\geq 0} 
              \sum_{k=0}^{n} {n \choose k} (-1)^{k+1} k! (n-k+1)
              \frac{x^n}{n!},
\end{eqnarray*}
which proves the claim.
\end{proof}

\begin{proposition}
\label{proposition_recursion}
The M\"obius function of the Rees product of the cubical
lattice with the chain is given by
$$
     \mu(\Rees({\mathscr C}_n, C_{n+1})) = -1 + \sum_{i=0}^{n}
     (-1)^{n-i} \cdot 2^{n-i} {n \choose i} (i+1)(n-i)!.
$$
\end{proposition}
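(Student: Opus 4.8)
The plan is to evaluate the M\"obius function directly from Lemma~\ref{lemma_mu_Rees_poset}, specialized to the chain $C_{n+1}=T_{1,n+1}$, and then to recognize the resulting multiple sum as a convolution that collapses through Lemma~\ref{lemma_convolution}. Setting $t=1$, so that every $t$-analogue becomes $[m]=m$, Lemma~\ref{lemma_mu_Rees_poset} gives
\[
   \mu(\Rees(\mathscr{C}_n,C_{n+1})) = \sum_{S \subseteq \{1,\ldots,n\}} (-1)^{|S|}\,\vwt(S)\,f_S(\mathscr{C}_n).
\]
First I would record the explicit cubical data: a rank-$s$ element of $\mathscr{C}_n$ is an $(s-1)$-dimensional face, i.e.\ an element of $\{0,1,*\}^n$ with $s-1$ stars, so for $S=\{s_1<\cdots<s_k\}$ one has the product formula $f_S(\mathscr{C}_n)=2^{\,n-s_1+1}\binom{n}{s_1-1}\prod_{j=1}^{k-1}\binom{n-s_j+1}{s_{j+1}-s_j}$, while $\wwt(S)=s_1\prod_{j=1}^{k-1}(s_{j+1}-s_j+1)$ and $\vwt(S)=\wwt(S)\,(n+1-s_k)$.

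The key structural observation is that this sum factors according to the bottom face of each flag. Writing $i=s_1-1$ for the dimension of the lowest face, the choice of that face contributes $2^{n-i}\binom{n}{i}$, the factor $[s_1]=s_1=i+1$ counts its copies in the Rees poset, and \emph{everything above it} depends only on the number $M:=n-i$ of coordinates not yet starred, since the number of ways to enlarge a face by converting non-star coordinates to stars depends only on how many non-star coordinates remain. Substituting the gap variables $a_j=s_{j+1}-s_j$, I would isolate the inner quantity
\[
   \psi(M) = \sum_{k \geq 1}(-1)^{k-1}\!\!\sum_{\substack{a_1,\ldots,a_{k-1}\geq 1\\ a_1+\cdots+a_{k-1}\leq M-1}}\!\! \bigl(M-a_1-\cdots-a_{k-1}\bigr)\prod_{j=1}^{k-1}(a_j+1)\binom{M-a_1-\cdots-a_{j-1}}{a_j},
\]
and reduce the whole computation (separating out the empty set, which contributes $n$) to
\[
   \mu(\Rees(\mathscr{C}_n,C_{n+1})) = n - \sum_{i=0}^{n-1}2^{n-i}\binom{n}{i}(i+1)\,\psi(n-i).
\]

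It then remains to prove the clean closed form $\psi(M)=(-1)^{M-1}M!$. Peeling off the first gap $a_1$ yields the recursion $\psi(M)=M-\sum_{a=1}^{M-1}(a+1)\binom{M}{a}\psi(M-a)$, and I would verify the closed form by induction on $M$: substituting $\psi(m)=(-1)^{m-1}m!$ and reindexing $b=M-a$ turns the recursion into $\sum_{b=1}^{M}(M-b+1)\binom{M}{b}(-1)^{b-1}b!=M$, which is precisely Lemma~\ref{lemma_convolution} with $n=M$ once the $b=0$ term (equal to $-(M+1)$) is separated from its total value $-1$. Substituting $\psi(n-i)=(-1)^{n-i-1}(n-i)!$ back gives $\mu=n+\sum_{i=0}^{n-1}(-1)^{n-i}2^{n-i}\binom{n}{i}(i+1)(n-i)!$; adjoining the $i=n$ term, whose value is $n+1$, and using $-1+(n+1)=n$ produces the stated formula. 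I expect the main obstacle to be the grouping and reduction step: seeing precisely that the part of each flag above its bottom face depends on $i$ only through $M=n-i$, and managing the sign and index bookkeeping so that the peeled recursion lines up exactly with the convolution identity of Lemma~\ref{lemma_convolution}.
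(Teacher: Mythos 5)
Your proposal is correct, but it follows a genuinely different route from the paper. The paper never touches the flag-vector machinery of Section~2 for this proposition: instead it computes the M\"obius function of each upper interval $[x,\ho]$ directly, showing by induction on the corank $k$ of $x$ (via the defining recursion $\mu([x,\ho]) = -\sum_{x<y\leq\ho}\mu([y,\ho])$, a count of corank-$i$ elements in $[x,\ho)$ equal to $\binom{k-1}{i-1}(k-i+1)$, and Lemma~\ref{lemma_convolution}) that $\mu([x,\ho]) = (-1)^{k}(k-1)!$, and then sums $\mu(\hz,\ho) = -\sum_{\hz < x \leq \ho}\mu(x,\ho)$ over the $2^{n-i}\binom{n}{i}(i+1)$ elements of each rank $i+1$. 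You instead specialize Lemma~\ref{lemma_mu_Rees_poset} at $t=1$, insert the explicit flag $f$-vector of $\mathscr{C}_n$, group flags by their bottom element, and collapse the inner sum over gap compositions; your recursion $\psi(M) = M - \sum_{a=1}^{M-1}(a+1)\binom{M}{a}\psi(M-a)$ reduces, after the reindexing $b = M-a$ and separating the $b=0$ term, to exactly the same Lemma~\ref{lemma_convolution}. The two arguments are closely related under the hood: your $\psi(n-i) = (-1)^{n-i-1}(n-i)!$ is precisely the paper's $\mu([x,\ho])$ for $x$ of rank $i+1$, computed by an alternating chain count (Philip Hall) rather than by induction on the M\"obius recursion, and both proofs funnel through the same convolution identity. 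What the paper's route buys is brevity and transparency, with no composition bookkeeping; what yours buys is that the proposition becomes a direct specialization of the general tree formula of Section~2 together with explicit cubical flag data, which also makes visible why the answer factors as (choice of bottom face) $\times$ (a quantity depending only on $n-i$). I verified your flag $f$-vector formula for $\mathscr{C}_n$, the grouping step, the peeled recursion, the induction, and the final index adjustment ($i=n$ term equal to $n+1$, combined with $-1$ to give $n$); all are sound.
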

\begin{proof}
Let $x$ be an element of corank $k$ from $\Rees({\mathscr C}_n, C_{n+1}) - \{\hz, \ho\}$.
First note 
that the number of elements of corank $i$  
in the half-open interval
$[x,\ho)$
is 
${k-1 \choose i-1} \cdot (k-i+1)$.
This follows from the fact that
the element
$x = (b,p)$ has $k-1$ non-stars
appearing in $b$,
so a corank $i$ element
$y = (c,q) \in [x,\ho)$
has
$i-1$ more stars appearing in $c$ and
the second coordinate
$q$ satisfying
$p \leq q \leq p+k-i+1$.
Hence there are  
${k-1 \choose i-1} \cdot (k-i+1)$
such elements $y$.
Secondly, we claim that
for a corank $k$ element $x \in \Rees({\mathscr C}_n, C_{n+1}) - \{\hz, \ho\}$,
we have
\begin{equation}
\label{equation_Mobius_x}
     \mu([x,\ho]) = (-1)^k \cdot (k-1)!.
\end{equation}
We induct on the corank $k$.
The case $k=0$ is clear, as then $x$ is a coatom.
For the general case,
we have
\begin{eqnarray*}
     \mu([x,\ho]) & = & - \sum_{x < y \leq \ho} 
                                   \mu([y,\ho])\\
                      & = &  - \left(1 + \sum_{
\onethingatopanother{x < y \leq \ho,}{1 \leq \corank(y) \leq k-1}
                                               }
                                    \mu([y,\ho])
                               \right)\\
                      & = &  - \left(1 + \sum_{i =1}^{k-1}
                                    (-1)^i \cdot (i-1)! \cdot 
                                    \mbox{number of elements of corank $i$
                                         in $[x,\ho)$}
                               \right),
\end{eqnarray*}
where the third equality is applying the induction hypothesis.
The number of corank $i$ elements in the
half-open interval
$[x,\ho)$ is
${k-1 \choose i-1} \cdot (k-i+1)$, 
giving
\begin{eqnarray*}
     \mu([x,\ho]) & = &  - \left(1 + \sum_{i =1}^{k-1}
                                    (-1)^i {k-1 \choose i-1}
                                    \cdot (i-1)! 
                                    \cdot (k-i+1)
                               \right)
                        = (-1)^k \cdot (k-1)!
\end{eqnarray*}
by
Lemma~\ref{lemma_convolution}.

To finish the argument, there are
$2^{n-k} \cdot {n \choose k} \cdot (k+1)$
elements of rank $k+1$, each having M\"obius
value
$\mu(x,\ho) = (-1)^{n-k+1} \cdot (n-k)!$.
Hence the lemma follows
the fact that for a poset $P$ with $\hz$ and~$\ho$, the
identity 
$\mu_{P}(\hz,\ho) = - \sum_{\hz < x \leq \ho} \mu_{P} (x,\ho)$
holds.
\end{proof}

\begin{table}
\begin{center}
\begin{tabular}{crrrl}
$n$ &
$D_n = (-1)^{n+1} \mu(\Rees(B_n, C_n)$
&$(-1)^n \mu(\Rees({\mathscr C}_n, C_{n+1}))$ &$=$ &Factorization\\
\hline
0 & 1 & 0&=& 0\\
1 & 0 & 1 &=& $1\cdot 1$\\
2 & 1 & 2 &=& $2\cdot 1$\\
3 & 2 & 15&=&$3\cdot 5$\\
4 & 9 & 116&=&$4\cdot 29$\\
5 & 44 & 1165&=&$5 \cdot 233$\\
6 & 265 & 13974&=& $6 \cdot 2329$\\
7 & 1854 & 195643&=& $7 \cdot 27949$\\
8 & 14833 & 3130280&=& $8\cdot 391285$\\
9 & 133496 & 56345049&=& $9 \cdot 6260561$\\
10 & 1334961 & 1126900970 &=&$10 \cdot 112690097$\\
\end{tabular}
\end{center}
\caption{Table of M\"obius values for the
Rees product of the Boolean algebra
with the chain and
the Rees product of the
cubical lattice
with the chain.}
\label{mobius_rees_boolean_cubical}
\end{table}

\tinyspace
\section{Edge labeling}
\label{section_R_labeling}
\setcounter{equation}{0}

We begin by recalling some facts
about $R$-labelings.  For a complete overview,
we refer the reader to Section 5 of Bj\"orner and Wachs'
paper~\cite{Bjorner_Wachs}.

Given  a poset $P$ an {\em edge labeling}
is a map $\lambda: E(P) \rightarrow \Lambda$,
where $E(P)$ denotes the edges in the Hasse diagram of
$P$ and the labels are elements from a poset $\Lambda$.
An edge labeling $\lambda$ 
is said to be an {\em $R$-labeling}
if
in every interval $[x,y]$ of $P$
there is a unique saturated chain 
$c: x=x_0 \prec x_1 \prec \cdots \prec x_k = y$
whose labels
are rising, that is,
which satisfies
$\lambda(x_0,x_1) <_{\Lambda}
 \lambda(x_1,x_2) <_{\Lambda}
\cdots
<_{\Lambda}
 \lambda(x_{k-1},x_k).$ 
Given a maximal chain 
$m : \hz = x_0 \prec x_1 \prec \cdots \prec x_n = \ho$ 
in $P$, the 
{\em descent set} of $m$ is the 
set
$D(m) = \{i  :  \lambda(x_{i-1},x_i) \not<_{\Lambda} 
                         \lambda(x_{i},x_{i+1}) \}$.
Alternatively,
when we view the labels
of the maximal chain
as the word
$\lambda(m) = \lambda_1 \cdots \lambda_n$,
where
$\lambda_i = \lambda(x_{i-1},x_i)$
and the rank of $P$ is $n$,
there is a descent 
in the $i$th position of $\lambda(m)$ if
the labels $\lambda_i$ and $\lambda_{i+1}$ are either
incomparable in the label poset $\Lambda$ or
satisfy
$\lambda_i >_{\Lambda} \lambda_{i+1}$.
In particular, a maximal chain $m$ is said to be
{\em rising} 
if its descent set satisfies $D(m) = \emptyset$
and
{\em falling} if 
$D(m) = \{1, \ldots, n\}$.

The usefulness of an $R$-labeling is that it gives
an alternate way to compute the M\"obius function $\mu$ of a poset.
Variations of this result are due to Stanley in the case of 
admissible lattices,
Bj\"orner for $R$-labelings and edge lexicographic labelings,
and Bj\"orner--Wachs for non-pure posets with a
$CR$-labeling.
See~\cite{Bjorner_Wachs} for historical details.

\begin{theorem}
\label{theorem_mobius_chains}
Let $P$ be a graded poset of rank $n$ with an $R$-labeling.
Then with respect to this
$R$-labeling the M\"obius function is given by
$$
    \mu(\hz,\ho) = (-1)^n \cdot \mbox{number of falling maximal chains in $P$}.
$$
\end{theorem}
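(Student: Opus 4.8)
The plan is to route the computation through the flag $f$-vector, its Möbius inversion (the flag $h$-vector), and the identification of the latter with a count of maximal chains by descent set. Since $P$ has rank $n$ its interior ranks are $\{1,\ldots,n-1\}$; write $f_S = f_S(P)$ for $S \subseteq \{1,\ldots,n-1\}$ and define the flag $h$-vector by
$$\beta_S = \sum_{T \subseteq S}(-1)^{|S| - |T|}f_T,$$
so that by Möbius inversion over the Boolean lattice $f_S = \sum_{T\subseteq S}\beta_T$. The crux is the claim that, for a poset with an $R$-labeling, $\beta_S$ equals the number of maximal chains $m$ with descent set exactly $D(m) = S$. Granting this, the theorem follows quickly: Philip Hall's theorem gives $\mu(\hz,\ho) = \sum_{S \subseteq \{1,\ldots,n-1\}}(-1)^{|S|-1}f_S$, and a direct comparison of this with $\beta_{\{1,\ldots,n-1\}} = \sum_{T}(-1)^{(n-1)-|T|}f_T$ yields $\mu(\hz,\ho) = (-1)^n \beta_{\{1,\ldots,n-1\}}$. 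As $\beta_{\{1,\ldots,n-1\}}$ counts exactly the maximal chains whose descent set is the full interior rank set, i.e.\ the falling chains, this is the desired formula.

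To prove the central claim it is cleaner to first establish the companion statement
$$f_S = \#\{\, m \text{ a maximal chain of } P : D(m) \subseteq S \,\},$$
from which the statement about $\beta_S$ follows by Möbius inversion, matching the definition above. First I would set up a bijection between maximal chains $m$ with $D(m)\subseteq S$ and chains $\hz < y_1 < \cdots < y_k < \ho$ of rank set exactly $S = \{s_1 < \cdots < s_k\}$, the latter being counted by $f_S$. In the forward direction, given such a rank-$S$ chain, I would refine it by replacing each interval $[y_{j-1},y_j]$ (including the outer intervals $[\hz, y_1]$ and $[y_k, \ho]$) by its unique rising saturated chain, whose existence and uniqueness is exactly the $R$-labeling hypothesis. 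Concatenating these rising pieces produces a maximal chain $m$; since each piece is rising, every descent of $m$ must occur where two pieces are joined, that is, at a rank in $S$, so $D(m)\subseteq S$. In the reverse direction, given $m$ with $D(m)\subseteq S$, I would simply select the elements of $m$ lying at the ranks in $S$.

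The step requiring the most care, and the main obstacle, is verifying that these two maps are mutually inverse, in particular that re-refining the selected rank-$S$ subchain of an $m$ with $D(m)\subseteq S$ returns $m$ itself. The key observation is that the descent positions strictly between two consecutive selected ranks lie in an open interval $(s_{j-1}, s_j)$ (with $\hz$ and $\ho$ at ranks $0$ and $n$ serving as the outer endpoints), hence none of them belongs to $S$; because $D(m)\subseteq S$, none is a descent, so the portion of $m$ inside $[y_{j-1}, y_j]$ is rising and therefore, by the uniqueness clause in the definition of an $R$-labeling, must coincide with the rising chain used in the refinement. This forces one composition to be the identity, while the other is immediate since selecting the rank-$S$ elements of a refinement recovers the original rank-$S$ chain. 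With the bijection in hand the enumeration $f_S = \#\{m : D(m)\subseteq S\}$ holds, Möbius inversion gives $\beta_S = \#\{m : D(m) = S\}$, and the sign bookkeeping of the first paragraph completes the proof.
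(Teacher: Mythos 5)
Your proof is correct, but note that the paper offers no proof of this theorem to compare against: it is stated as a known result, with attribution to Stanley (admissible lattices), Bj\"orner ($R$-labelings and edge lexicographic labelings), and Bj\"orner--Wachs ($CR$-labelings of non-pure posets), and the reader is referred to the literature for details. What you have written is precisely the classical argument from those references: establish $f_S(P)=\#\{m : D(m)\subseteq S\}$ by refining a chain of rank set $S$ with the unique rising saturated chain of each subinterval, check that refinement and rank-selection are mutually inverse (the only step that genuinely uses the uniqueness clause in the definition of an $R$-labeling), invert over the Boolean lattice to get $\beta_S=\#\{m : D(m)=S\}$, and finish with Philip Hall's theorem together with the sign identity $(-1)^n\beta_{\{1,\ldots,n-1\}}=\sum_{T}(-1)^{|T|-1}f_T$. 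All of these steps, including the sign bookkeeping, check out. The net effect is that your write-up makes self-contained what the authors chose to cite; it also quietly corrects a slip in the paper's exposition, which defines a falling chain by $D(m)=\{1,\ldots,n\}$ even though descents in a rank-$n$ poset can only occur at positions $1,\ldots,n-1$ --- your convention $D(m)=\{1,\ldots,n-1\}$ is the right reading and is the one under which the theorem's statement is true.
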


Let 
$\lambda : E(\Rees({\mathscr C}_n, C_{n+1})) \rightarrow 
 \{0, \pm 1, \pm 2, \ldots, \pm n, n+1\}  \times \{0,1\}$ 
be a labeling of the edges of the Hasse
diagram of $\Rees({\mathscr C}_n, C_{n+1})$ defined by
$$ 
     \begin{array}{lclclc}
     & \mbox{Edge}  & &\mbox{Condition} &\lambda(E) & \mbox{Notation}
     \\
     \hline
     (x,i) & \prec & (y,i) &x_a =1, y_a = * & (a,0) & a
     \\
     (x,i) & \prec & (y,i) &x_a = 0, y_a = * &  (-a,0) & -a
     \\
     (x,i) & \prec & (y,i+1) &x_a =1,  y_a = * & (a,1) & \overline{a}
     \\
     (x,i) & \prec & (y,i+1) &x_a = 0, y_a = * & (-a,1) & \overline{-a}
     \\
     \hz & \prec & (x,1) &                    &(0,0) & 0
     \\
     (x,i) & \prec & \ho &                    &(n+1,0) & n+1
\end{array} 
$$
where 
$x=(x_1, \ldots, x_n)$
and
$y=(y_1, \ldots, y_n)$.
The elements 
$\displaystyle{\{0, \pm 1, \ldots, \pm n, n+1\} \times \{0,1\}}$ 
are partially ordered with the
  product order, that is 
$(x,i)\leq (y,j)$ if 
$x \leq y$ and $i \leq j$.

\begin{proposition}
The labeling $\lambda$ is an $R$-labeling of $\Rees({\mathscr C}_n, C_{n+1})$.
\end{proposition}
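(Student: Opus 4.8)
The plan is to verify the defining property of an $R$-labeling directly: every closed interval of $\Rees({\mathscr C}_n, C_{n+1})$ contains a \emph{unique} saturated chain whose word of labels is strictly increasing in the product order on $\Lambda = \{0, \pm 1, \ldots, \pm n, n+1\} \times \{0,1\}$. I would organize the verification according to the type of interval, namely entirely interior, or containing $\hz$, or containing $\ho$, or equal to $[\hz,\ho]$, and reduce each case to a single combinatorial normalization of the starring order and the chain increments.

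First I would describe the saturated chains of a purely interior interval $[(x,i),(y,j)]$. Writing $A = \{a : x_a \neq *,\ y_a = *\}$ for the set of coordinates that get starred, every saturated chain from $(x,i)$ to $(y,j)$ stars the coordinates of $A$ in some order while incrementing the second coordinate in exactly $j-i$ of the $|A|$ steps. The crucial point is that each $a \in A$ carries a sign fixed by the bottom element: $\epsilon_a = +1$ if $x_a = 1$ and $\epsilon_a = -1$ if $x_a = 0$, and the step starring $a$ receives the label $(\epsilon_a a, t)$ with $t=1$ exactly when it increments. Since distinct indices give distinct signed values $\epsilon_a a$, the first coordinates of the labels along any such chain are pairwise distinct.

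Next I would read off the rising condition. Because the first label coordinates are distinct, a comparison $(\epsilon_a a, t) <_{\Lambda} (\epsilon_{a'} a', t')$ forces $\epsilon_a a < \epsilon_{a'} a'$ together with $t \le t'$. Hence a rising chain must list the coordinates of $A$ in the unique order of increasing signed value, and must have a nondecreasing sequence of $t$'s; the latter forces the $j-i$ increments to occupy precisely the last $j-i$ steps. This pins down a single candidate chain, and I would then check that it is genuinely a chain of the poset (the inequality between second coordinate and rank is preserved at each step) and genuinely rising, which establishes existence and uniqueness for interior intervals.

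Finally I would treat the boundary edges, which I expect to carry the real content. An interval $[\hz,(y,j)]$ begins with an edge labeled $(0,0)$, so rising forces the first interior signed value to be positive; as that value is the smallest of the $\epsilon_a a$, all signs must equal $+1$, which uniquely selects the starting vertex (all free coordinates set to $1$) and reduces the rest to the interior analysis. Dually, an interval $[(x,i),\ho]$ ends with an edge labeled $(n+1,0)$, whose second coordinate $0$ forces the preceding step to be non-incrementing; combined with ``increments last'' this forces no increments at all, so the coatom passed through keeps the second coordinate $i$, again giving a unique rising chain. The interval $[\hz,\ho]$ combines both constraints, yielding the all-ones vertex, no increments, and the label word $0,1,\ldots,n,n+1$. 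The main obstacle throughout is the bookkeeping of the two-coordinate product order, so that the two normalizations --- sorting the signed values for the first coordinate and pushing increments to the end for the second --- are seen to be simultaneously forced and mutually compatible, together with confirming at each stage that the uniquely forced chain actually lies in the poset.
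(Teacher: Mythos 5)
Your proposal is correct and follows essentially the same route as the paper's proof: for interior intervals $[(x,i),(y,j)]$ the distinct signed values force a unique increasing order of starrings while the product order forces the second-coordinate increments to the end, and the special labels $(0,0)$ and $(n+1,0)$ handle the intervals containing $\hz$ or $\ho$ exactly as the paper does. The only (cosmetic) difference is that you identify the forced atom below $(y,j)$ precisely as the vertex whose free coordinates all equal $1$, which is in fact slightly more careful than the paper's shorthand that the increasing chain passes through $((1,\ldots,1),1)$.
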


\begin{proof}
Let 
$I=[(x,i),(y,j)]$
be an interval in $\Rees({\mathscr C}_n, C_{n+1}) - \{\hz,\ho\}$
of length $m$
with
$x = (x_1, \ldots, x_n)$
and
$y = (y_1 \ldots, y_n)$.
We wish to find a
saturated chain 
$c: (x,i)=(z_0,p_0) \prec (z_1,p_1) \prec \cdots \prec (z_m,p_m)=(y,j)$ 
in the interval $I$
with increasing edge labels.

Let 
$S_0 = \{k : x_k=0 \mbox{ and } y_k=*\}$ 
and
$S_1 = \{k : x_k=1 \mbox{ and } y_k=*\}$.
Let $s = j-i$ and $t = |S_0|$.
Without loss of generality, we may
assume
$S_0 = \{i_1, \ldots, i_t\}$
and
$S_1 = \{i_{t+1}, \ldots, i_m\}$
with
$i_1 > \cdots > i_t$
and
$i_{t+1} < \cdots < i_m$.
Set
$(z_0, p_0) = (x, i)$.
For $1 \leq k \leq m$,
let $(z_k,p_k)=((z_{1,k}, \ldots, z_{n,k}),p_k)$ 
where
$$ 
    z_{i,k} = \left\{\begin{array}{ll} 
                          *         & \mbox{ if $i = i_k$}, \\
                          z_{i,k-1} & \hbox{ otherwise}, 
                    \end{array} \right. 
$$
and
$$ 
     p_k = \left\{ \begin{array}{ll}
                 p_{k-1}     &  \hbox{ if $1 \leq k \leq m-s$},\\
                 p_{k-1} + 1 & \hbox{ otherwise.} 
\end{array} \right. 
$$
The first coordinate of the edge labels of the chain $c$
form
the strictly increasing sequence
$-i_1 < \cdots < -i_t < i_{t+1} < \cdots < i_m$
as the $i_j$'s are all positive,
while the second coordinate of the edge labels
form the weakly increasing sequence
$0 \leq \cdots \leq 0 \leq 1 \leq \cdots \leq 1$.
Hence the chain $c$ constructed is increasing.

We also claim that the chain $c$ 
is the unique such chain that is increasing
in the interval $I$.
For any maximal chain in this interval,
each $i \in S_0$ appears as the first
coordinate in an edge label with a negative sign and 
every $i \in S_1$
must appear with a positive sign.
Hence
there is exactly one way to linearly order these $m$ values.
The second coordinate of the
labels of any maximal chain in $I$
is a permutation of the multiset
$\{0^{m-s}, 1^s\}$.
Again, there is exactly one way to order these $m$ values
in a weakly increasing fashion.
Hence the increasing chain $c$ is unique.

For the case when the interval is $[\hz, (y,j)] \in \Rees({\mathscr C}_n, C_{n+1})$ with
$(y,j) \neq \ho$,
the first edge label in any saturated chain is always
$(0,0)$.
Hence the first coordinate of the
labels in any increasing chain in
this interval must all be non-negative, implying an
increasing chain must pass through the
atom $(a,1) = ((1, \ldots, 1), 1)$.
The remainder of the increasing chain
is given by the unique increasing
maximal chain in the interval
$[(a,1), (y,j)]$.

For an interval of the form $[(x,i),\ho]$, 
since the last edge label of any saturated chain
has label
$(n+1, 0)$,
this forces all the elements of such a chain
to be of the form $(y,i)$ with
$x \leq_{\mathscr{C}_n} y$.
In particular, the rank~$n$ element of such a chain is
precisely the element
$(b,i) = ((*, \ldots, *), i)$.
Hence the increasing maximal chain in
$[(x,i),\ho]$ is given by the increasing
maximal chain guaranteed in
$[(x,i),(b,i)]$ concatenated with the element $\ho$.
\end{proof}

\tinyspace
\section{Falling chains}
\setcounter{equation}{0}

Define the set of ({\em double augmented}) {\em barred signed  permutations}
$\barredsigned{n}$
to be
those permutations
$\pi=\pi_0 \pi_1 \cdots \pi_{n+1}$ satisfying
($i$) $\pi_0 = 0$ and
$\pi_{n+1} = n+1$,
($ii$) for $1 \leq i \leq n$,
$\pi_i$ is equal to
one of 
$a_i$, $-a_i$,
$\overline{a_i}$ or
$\overline{-a_i}$ 
for some $a_i \in \{1, \ldots, n\}$,
and
($iii$) $a_1 \cdots a_n$ is a permutation
in the symmetric group~$\Sym_n$
on $n$ elements. 
Given a double augmented barred signed permutation
$\pi=\pi_0 \pi_1 \cdots \pi_{n+1}$,
a {\em descent} at position $i$ occurs
when 
$|\pi_i| > |\pi_{i+1}|$,
where $|\pi_j|$ denotes the element $\pi_j$ with its (possible) bar removed
and sign preserved.

\begin{proposition}
\label{proposition_chains_and_permutations} 
With respect to the $R$-labeling $\lambda$
of the poset $\Rees({\mathscr C}_n, C_{n+1})$,
the falling chains are described
as 
the set of double augmented barred signed permutations
$\pi =\pi_0 \pi_1 \cdots \pi_{n+1} \in \barredsigned{n}$
satisfying
\begin{enumerate}

\item 
if $\pi_i$ is unbarred 
then there must be a descent at the 
$i$th position.

\item if $\pi_i$ is barred, 
then either
($i$) $\pi_{i+1}$ is unbarred or
($ii$) $\pi_{i+1}$ is barred and there is a descent at the $i$th position.
\end{enumerate}
\end{proposition}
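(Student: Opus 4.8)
The plan is to set up a bijection between the maximal chains of $\Rees({\mathscr C}_n, C_{n+1})$ and the permutations in $\barredsigned{n}$, under which the falling chains correspond exactly to the permutations obeying (1) and (2). First I would record the shape of a maximal chain $\hz = z_0 \prec z_1 \prec \cdots \prec z_{n+2} = \ho$: its bottom edge is forced to carry label $0$ and its top edge label $n+1$, while each of the $n$ middle covers adjoins one star to the cubical coordinate and at the same time either keeps the chain coordinate fixed or raises it by one, so it receives a label of one of the four types $a$, $-a$, $\overline{a}$, $\overline{-a}$. Reading these labels in order gives a word $\pi_0 \pi_1 \cdots \pi_{n+1}$ with $\pi_0 = 0$ and $\pi_{n+1} = n+1$; because the $n$ starred positions are pairwise distinct, the underlying absolute values $a_1 \cdots a_n$ form a permutation in $\Sym_n$, so $\pi \in \barredsigned{n}$.

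Next I would build the inverse. Given $\pi \in \barredsigned{n}$, the sign of each $\pi_i$ should record the $0/1$ value of coordinate $a_i$ immediately before it becomes a star; since each coordinate is starred only once, these signs pin down a unique starting vertex, the underlying permutation $a_1 \cdots a_n$ dictates the order in which stars are adjoined, and the bars dictate at which steps the chain coordinate advances. The point that needs checking is legality: the chain coordinate must never exceed the current rank. After $j$ star-adjoining steps it equals $1$ plus the number of bars among the first $j$ letters, which is at most $j+1$, exactly the available rank, so every $\pi$ yields a genuine saturated chain and the correspondence is a bijection (a count of $n! \cdot 4^n$ on each side is a useful sanity check).

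With the bijection in hand, I would translate ``falling'' into the two conditions. A chain is falling exactly when each of the $n+1$ consecutive label pairs is a descent in the product order on $\{0, \pm 1, \ldots, \pm n, n+1\} \times \{0,1\}$. Since consecutive labels always differ in their first coordinate, the pair corresponding to $(\pi_i, \pi_{i+1})$ is a descent if and only if $|\pi_i| > |\pi_{i+1}|$ or $\pi_i$ is barred while $\pi_{i+1}$ is unbarred; the first alternative is precisely a descent at position $i$. Casing on whether $\pi_i$ is barred then recovers the statement: an unbarred $\pi_i$ makes the second alternative vacuous, forcing the descent $|\pi_i| > |\pi_{i+1}|$, which is (1); a barred $\pi_i$ gives a descent automatically when $\pi_{i+1}$ is unbarred and otherwise needs $|\pi_i| > |\pi_{i+1}|$, which is (2). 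Letting $i$ run over $0, \ldots, n$, so that the fixed ends $\pi_0 = 0$ and $\pi_{n+1} = n+1$ participate, covers all $n+1$ gaps.

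The main difficulty I expect is bookkeeping rather than ideas: keeping the product order straight so that incomparable label pairs are correctly counted as descents, and confirming that the two extreme gaps behave as the general conditions demand -- namely that $\pi_0 = 0$ being unbarred forces $\pi_1 < 0$, and that $\pi_{n+1} = n+1$ being unbarred forces $\pi_n$ to be barred in every falling chain. These endpoint checks, together with the rank-bound verification in the bijection, are the places where a slip is most likely to occur.
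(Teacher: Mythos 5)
Your proposal is correct and follows essentially the same route as the paper's own proof: decode each $\pi \in \barredsigned{n}$ into a maximal chain (signs determine the starting vertex, the underlying permutation the order in which coordinates are starred, bars the steps where the chain coordinate advances), then verify the falling condition by the same two-case analysis of consecutive labels in the product order on $\{0,\pm 1,\ldots,\pm n, n+1\} \times \{0,1\}$. The only difference is one of completeness rather than method: you make the chain--permutation bijection and both directions of the equivalence explicit (including the legality bound on the chain coordinate and the endpoint checks), whereas the paper only spells out the construction of a falling chain from a permutation satisfying (1) and (2) and leaves the converse implicit.
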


\begin{example}
The permutation 
$(0,-3,\overline{-4},2,\overline{-1},5)\in \barredsigned{4}$
corresponds to the falling chain 
$$\hz \prec
(0100,1) \prec 
(01*0,1) \prec
(01**,2) \prec
(0***,2) \prec
(****,3) \prec
\ho$$
in the poset $\Rees({\mathscr C}_4, C_5)$.
\end{example}

\begin{proof_special}{Proposition~\ref{proposition_chains_and_permutations}}
Given a barred signed permutation 
satisfying the conditions
of the proposition,
we wish to find a falling
chain 
$c: \hz \prec (x_1,i_1) \prec  \cdots \prec (x_n,i_n) \prec \ho$ 
in $\Rees({\mathscr C}_n, C_{n+1})$.  
For $1 \leq k \leq n$, if $\pi_k < 0$
then set $x_{1,k} = 1$;
otherwise set
$x_{1,k} = 0$.
To find $(x_k,i_k)$
recursively, 
set $i_1 = 0$,
let
$x_{w_k,k}= *$, and set 
$$
     i_k= \left\{\begin{array}{ll}
               i_{k-1}+1 & \mbox{if } \pi_k \mbox{ is barred,}\\
               i_{k-1} & \mbox{if } \pi_k \mbox{ is not barred.}
          \end{array}
          \right.
$$
Observe that  $c$ is a falling chain.  
The labels on the barred signed
permutation correspond to the labels on the  falling chain.
Note that if the unbarred signed permutation 
does not have a descent at some position~$k$,
then $\pi_k$ is barred and $\pi_{k+1}$ is not, implying the second
coordinate in the labeling 
$\lambda ((x_k,i_k), (x_{k+1},i_{k+1}))$ is $1$,
while the second coordinate in the labeling
$\lambda ((x_{k+1},i_{k+1}), (x_{k+2},i_{k+2}))$ is $0$.
Hence,
the chain is not rising in the $k$th position.  Otherwise, the
unbarred permutation has a descent and hence the first coordinate
in the labeling $\lambda ((x_k,i_k), (x_{k+1},i_{k+1}))$
is greater than the first coordinate in the labeling
$\lambda ((x_{k+1},i_{k+1}), (x_{k+2},i_{k+2}))$ and hence the
chain is not rising.
\end{proof_special}

Throughout we will use
$\Fall_n$ to denote the set of all the
falling double augmented barred signed permutations
in $\barredsigned{n}$.

\begin{theorem}
The M\"obius function of the Rees product $\Rees({\mathscr C}_n, C_{n+1})$
is given by
$$
   \mu(\Rees({\mathscr C}_n, C_{n+1}))= (-1)^n \cdot {\sum_{c} 2^{n-c_1} 
                   {n \choose {c_1, \ldots, c_k}} \cdot 
                   c_1 \cdot \prod_{i=2}^{k}(c_i -1)},
$$
where the sum is over all
compositions $c = (c_1, \ldots, c_k)$ 
of $n$ and 
$1 \leq k \leq n$.
\end{theorem}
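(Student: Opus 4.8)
The plan is to combine the edge-labeling machinery with the combinatorial description of falling chains and then carry out a careful enumeration organized by a single composition-valued statistic. First, since $\Rees(\mathscr{C}_n,C_{n+1})$ has rank $n+2$, Theorem~\ref{theorem_mobius_chains} applied to the $R$-labeling $\lambda$ gives $\mu(\Rees(\mathscr{C}_n,C_{n+1})) = (-1)^{n+2}\,|\Fall_n| = (-1)^n\,|\Fall_n|$, so it suffices to count the falling double augmented barred signed permutations. By Proposition~\ref{proposition_chains_and_permutations} these are the $\pi = \pi_0\cdots\pi_{n+1} \in \barredsigned{n}$ obeying the two descent/bar conditions. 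I would encode such a $\pi$ by the triple $(a,\epsilon,b)$ consisting of the underlying permutation $a_1\cdots a_n \in \Sym_n$, the sign vector $\epsilon$, and the bar vector $b \in \{0,1\}^n$, writing $w_i$ for the signed value $|\pi_i|$ with $w_0 = 0$ and $w_{n+1} = n+1$. In this language the falling conditions say exactly that the descent $w_{i} > w_{i+1}$ is \emph{required} at every position $i \in \{0,\dots,n\}$ except those with $b_i = 1,\ b_{i+1} = 0$ (reading $b_{n+1} = 0$); in particular $\epsilon_1 = -$ and $b_n = 1$ are forced.

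The key device is to read off from $\pi$ the set $F = \{\, i : b_i = 1,\ b_{i+1} = 0\,\}$ of ``barred-then-unbarred'' positions; it always contains $n$, and writing $F = \{p_1 < \cdots < p_k = n\}$ with $p_0 = 0$ it determines a composition $c = (c_1,\dots,c_k)$ of $n$ via $c_j = p_j - p_{j-1}$. The crucial, initially counterintuitive, point is that $F$ is a statistic of the bars alone and that a member of $F$ need not be a genuine ascent of the $w$-sequence, so I would \emph{not} take $c$ to be the composition of maximal decreasing runs. Instead I would fix $c$ and count in two independent stages, grouping the sum over all $(a,\epsilon,b)$ according to the value of $F$.

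For the first stage, fix $F$ and count the admissible $(a,\epsilon)$. The forced descents are precisely those at positions outside $F$, so $w$ must strictly decrease along each block $\{p_{j-1}+1,\dots,p_j\}$ while the block boundaries are unconstrained; hence the blocks are independent. Distributing the values $\{1,\dots,n\}$ among the blocks contributes ${n \choose c_1,\dots,c_k}$. Since the first block lies below $w_0 = 0$, all its entries are forced negative and its arrangement is unique; each later block is an arbitrary ``valley'' whose $2^{c_j}$ sign choices each determine a unique decreasing arrangement. This yields $2^{\,n-c_1}{n \choose c_1,\dots,c_k}$ admissible pairs $(a,\epsilon)$ for a given $F$. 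For the second stage I would count the bar vectors $b$ producing a given $F$: the requirement that no interior position of a block be a barred-then-unbarred pair forces the restriction of $b$ to that block to be a weakly increasing $0/1$ string ending in $1$, with its first letter additionally forced to $0$ when $j \ge 2$; these number $c_1$ for the first block and $c_j - 1$ for each subsequent block. Multiplying the two stages and summing over all compositions gives $|\Fall_n| = \sum_c 2^{\,n-c_1}{n \choose c_1,\dots,c_k}\,c_1\prod_{i=2}^k(c_i-1)$, which with the sign $(-1)^n$ is the asserted formula.

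The main obstacle is conceptual rather than computational: recognizing that the correct statistic is the bar-defined set $F$, not the descent-run structure of $\pi$, and then verifying the clean separation of variables — that once $F$ is fixed the value/sign data and the bar data range completely independently, the bars contributing exactly the factor $c_1\prod_{i\ge 2}(c_i-1)$. Establishing this independence, together with pinning down that the first block alone is sign-forced (which explains the asymmetric exponent $n - c_1$ and the single factor $c_1$ versus the $c_i - 1$ for $i\ge 2$), is where the care is required.
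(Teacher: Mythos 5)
Your proof is correct and follows essentially the same route as the paper: your blocks determined by the barred-then-unbarred positions $F$ are exactly the paper's substrings (a run of unbarred elements followed by a run of barred elements), and your counts --- the multinomial for distributing values, the forced negative signs on the first block giving $2^{n-c_1}$, and the bar placements giving $c_1\prod_{i\ge 2}(c_i-1)$ --- coincide with the paper's. The only difference is presentational: you make explicit (via the set $F$ and the independence of bar data from value/sign data) what the paper's substring decomposition carries out implicitly.
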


\begin{proof}
By Theorem~\ref{theorem_mobius_chains}, 
to determine the M\"obius function
of the poset $\Rees({\mathscr C}_n, C_{n+1})$
it is enough to count the number
of falling chains in $\Rees({\mathscr C}_n, C_{n+1})$.  
Proposition~\ref{proposition_chains_and_permutations}
allows one to separate 
the double augmented barred signed permutations
corresponding to falling chains 
into substrings which consist of a sequence of unbarred
elements followed by a sequence of barred elements.

By Proposition~\ref{proposition_chains_and_permutations},
the element
$0$ will alway be part of the 
first substring and
the last substring will consist only of the element
$n+1$.
Determining the size of
each substring
is equivalent to taking a
composition $c=(c_1,c_2,\ldots,c_k)$ of $n$. 
Note that the first
substring will be of size $c_1 + 1$ to account for 
the element $0$ and the
$(k+1)$st substring will consist only of the element  $n+1$.

In each substring
there is a sequence of elements without bars followed by
a sequence of elements with bars.  Given the size of each
substring we determine at what place the barred
elements begin.  In the
first substring we can begin the bars at any place, so 
there are $c_1$ ways.  For
all the other substrings the first element cannot be barred, 
for otherwise  it would
belong to the previous substring.
Thus, we can begin the sequence of barred elements 
in $c_i -1$
ways
for $i = 2, \ldots, k$.  
The total number of ways to place bars over elements is
${\displaystyle c_1 \cdot \Pi_{i=2}^{k}(c_i -1)}$.

Next, we choose the elements that will be in each 
substring.  This is done
in 
${n\choose {c_1,c_2,\ldots,c_k}}$ ways.
Now we must sign these elements.  Note
that the elements in each substring must
be arranged in decreasing order.  Once we have chosen the signs,
this can be done in exactly one way.
Furthermore, all of the elements in the
first block must be negative because 
the falling double augmented signed permutation
begins with the element $0$.
This leaves
$2^{n-c_1}$ ways to sign the remaining elements.
\end{proof}

\section{Signed derangement numbers, skew diagrams and a bijective proof}
\label{section_bijection}
\setcounter{equation}{0}

Recall that the derangement number
$D_n$ can be expressed as the permanent
of an $n \times n$ matrix having~$0$'s on the
diagonal and $1$'s everywhere else.
Motivated by this, define the
{\em signed derangement number}~$D_n^{\pm}$ 
by
$$
D_n^{\pm} =
\hbox{{\rm per}} \begin{bmatrix}
1      & 2      & \cdots  & 2\\
2      & 1      & \cdots  & 2\\
\vdots &\vdots  &\ddots   & \vdots\\
2      & 2      &\cdots   & 1
\end{bmatrix},
$$
that is,
the permanent of an
$n \times n$ matrix having
$1$'s on the diagonal and
$2$'s everywhere else.
It is straightforward to see that this permanent enumerates
signed permutations 
$\pi = \pi_1 \cdots \pi_n \in \signed{n}$ 
having no fixed points, that is,
no index
$i$ satisfying $\pi_i = i$.
See~\cite{Chen_Zhang,Chow} for 
details.

\begin{lemma}
For $n \geq 0$,
$D_n^{\pm}$ is the nearest integer to
$\frac{2^n \cdot n!}{\sqrt{e}}$.
\end{lemma}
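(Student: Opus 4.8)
The plan is to establish the asymptotic/nearest-integer formula for the signed derangement number $D_n^{\pm}$ by first deriving an exact closed-form expression via inclusion-exclusion, and then showing that the tail of the resulting alternating sum is small enough to pin down the nearest integer. Since $D_n^{\pm}$ counts signed permutations $\pi \in \signed{n}$ with no index $i$ satisfying $\pi_i = i$, I would apply inclusion-exclusion over the set of ``bad'' positions where $\pi_i = i$. The total number of signed permutations is $2^n \cdot n!$ (each of the $n$ entries gets a sign and the underlying unsigned permutation is arbitrary), and fixing a prescribed set of $k$ positions to be genuine fixed points $\pi_i = i$ (unsigned, positive) leaves $2^{n-k} \cdot (n-k)!$ ways to complete the signed permutation on the remaining $n-k$ positions.

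First I would write down the inclusion-exclusion identity
$$
    D_n^{\pm} = \sum_{k=0}^{n} (-1)^k \binom{n}{k} 2^{n-k} (n-k)!
              = n! \sum_{k=0}^{n} \frac{(-1)^k}{k!} 2^{n-k},
$$
where the second equality follows from expanding $\binom{n}{k} (n-k)! = n!/k!$. Factoring out $2^n$ gives
$$
    D_n^{\pm} = 2^n \cdot n! \sum_{k=0}^{n} \frac{(-1/2)^k}{k!}.
$$
The key observation is that $\sum_{k=0}^{\infty} \frac{(-1/2)^k}{k!} = e^{-1/2} = 1/\sqrt{e}$, so the finite sum is a truncation of the Taylor series for $1/\sqrt{e}$. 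Hence
$$
    D_n^{\pm} - \frac{2^n \cdot n!}{\sqrt{e}}
       = - 2^n \cdot n! \sum_{k=n+1}^{\infty} \frac{(-1/2)^k}{k!}.
$$

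The remaining step is to bound this error term in absolute value by $1/2$, which is exactly the condition guaranteeing that $\frac{2^n n!}{\sqrt{e}}$ rounds to $D_n^{\pm}$. Since the series is alternating with terms decreasing in magnitude for $k \geq n+1$, I would bound the tail by its first term, giving
$$
    \left| D_n^{\pm} - \frac{2^n \cdot n!}{\sqrt{e}} \right|
        \leq 2^n \cdot n! \cdot \frac{(1/2)^{n+1}}{(n+1)!}
        = \frac{1}{2(n+1)} \leq \frac{1}{2},
$$
with strict inequality for $n \geq 1$ and an easy direct check at $n = 0$. I do not expect any serious obstacle here; the only mild care needed is verifying the alternating-series tail bound applies (the ratio of consecutive terms is $\frac{1}{2(k+1)} < 1$ for all $k \geq 0$, so monotone decrease holds from the start) and confirming the boundary case, after which the nearest-integer claim follows immediately.
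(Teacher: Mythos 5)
Your proof is correct, and it is a more self-contained route than the one the paper takes. The paper disposes of this lemma in a single sentence, citing the exponential generating function $\sum_{n \geq 0} D_n^{\pm} \frac{x^n}{n!} = \frac{e^{-x}}{1-2x}$ and asserting that the claim ``follows directly.'' Your inclusion--exclusion identity $D_n^{\pm} = \sum_{k=0}^{n}(-1)^k\binom{n}{k}2^{n-k}(n-k)!$ is exactly the binomial convolution encoded by that product of generating functions ($e^{-x}$ convolved with $\frac{1}{1-2x}$), so both arguments ultimately rest on the same closed form $D_n^{\pm} = 2^n\, n! \sum_{k=0}^{n} \frac{(-1/2)^k}{k!}$; you reach it combinatorially, using the fixed-point-free interpretation of the permanent that the paper records just before the lemma, while the paper reaches it analytically. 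What your write-up adds is the step the paper leaves entirely implicit: the alternating-series tail estimate $\bigl|D_n^{\pm} - 2^n n!/\sqrt{e}\bigr| \leq \frac{1}{2(n+1)}$, which is what actually justifies the nearest-integer statement. Two minor remarks: your verification that the terms $\frac{(1/2)^k}{k!}$ decrease (ratio $\frac{1}{2(k+1)}<1$) is what legitimizes the tail bound, and in fact the alternating-series estimate is automatically strict here, since the tail is an infinite alternating sum of strictly decreasing positive terms and hence has absolute value strictly less than its first term; so even the case $n=0$ needs no separate check, though your direct verification is of course also fine. In short, your argument is more elementary (no generating functions) and more complete, while the paper's is shorter and consistent with the generating-function machinery it uses elsewhere.
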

\begin{proof}
This follows directly from the generating function
$\sum_{n \geq 0} D_n^{\pm} \frac{x^n}{n!} = \frac{e^{-x}}{1-2x}$.
\end{proof}

In this section we give a bijective
proof of  the following theorem.

\begin{theorem}
\label{theorem_permanent}
The M\"obius function of the Rees product of the cubical
lattice with the chain is given by
$$
 \mu(\Rees({\mathscr C}_n, C_{n+1})) =
(-1)^n \cdot  n \cdot D_{n-1}^{\pm}.
$$
\end{theorem}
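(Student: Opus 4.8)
The plan is to combine the edge-labeling results with a direct bijection. By Theorem~\ref{theorem_mobius_chains} and Proposition~\ref{proposition_chains_and_permutations}, the poset $\Rees({\mathscr C}_n, C_{n+1})$ has rank $n+2$ and its M\"obius function equals $(-1)^{n}$ times the number of falling chains, so that $\mu(\Rees({\mathscr C}_n, C_{n+1})) = (-1)^{n}\,|\Fall_n|$. It therefore suffices to exhibit a bijection proving $|\Fall_n| = n\cdot D_{n-1}^{\pm}$. I read the target as the set $\{1,\dots,n\}\times\{\sigma\in\signed{n-1} : \sigma\text{ has no positive fixed point}\}$, a marked letter $m$ together with a signed permutation of the remaining $n-1$ letters sending no letter to itself with positive sign; the latter is exactly what the permanent defining $D_{n-1}^{\pm}$ enumerates, and expanding that permanent along permutations gives $D_{n-1}^{\pm}=\sum_{\tau\in\Sym_{n-1}}2^{\,(n-1)-\operatorname{fix}(\tau)}$, the $1$'s on the diagonal forcing the negated letters $\sigma_i=-i$ and the $2$'s recording free signs on the non-fixed letters.

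The bijection is read off the block structure of a falling chain. Recall that such a chain breaks into maximal substrings, each an unbarred run followed by a barred run; within a substring the signed labels strictly decrease, so a substring is determined by its set of signed letters, the first substring is forced to be entirely negative (it begins with $0$), and the trailing substring is the singleton $\{n+1\}$. First I would peel off the first substring of size $c_1$: its $c_1$ possible bar-splits should select the marked letter $m$, while its remaining letters, being all negative, should become the letters that $\sigma$ negates in place; the $2^{\,n-c_1}$ free signs on the later substrings then match the free signs on the non-fixed letters of $\sigma$. The later substrings $B_2,\dots,B_k$, each carrying its bar-split (the weight $c_i-1$), should be converted into the nontrivial cyclic structure of the underlying derangement of the non-fixed letters. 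This last conversion is what the section title flags through \emph{skew diagrams}: I would encode a falling chain as a decorated skew (ribbon) diagram, encode a marked signed derangement as a skew diagram in the same family, and verify the two encodings coincide, so that ``no positive fixed point'' corresponds precisely to the falling conditions of Proposition~\ref{proposition_chains_and_permutations}.

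Assuming the composition formula for $\mu$ proved just above, the numerical skeleton this bijection must realize is transparent and guides every choice. Grouping falling chains by the size $c_1=f+1$ of their first substring and writing $m=n-1-f$, the desired equality reduces to the two clean identities
$$
\sum_{c}\binom{m}{c_1,\dots,c_j}\prod_{i=1}^{j}(c_i-1)=D_m
\qquad\text{and}\qquad
\binom{n}{f+1}(f+1)=n\binom{n-1}{f},
$$
where the first sum ranges over compositions $c=(c_1,\dots,c_j)$ of $m$ and $D_m$ is the ordinary derangement number. The second identity is exactly the ``mark a letter'' step that produces the factor $n$ (choose a block of size $f+1$ with a distinguished element, versus choose $m$ and then $f$ negated letters from the rest). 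The first identity, expressing the weighted ordered set partitions by derangement numbers, is the heart of the matter, and one checks it numerically against $D_1=0,D_2=1,D_3=2,D_4=9$.

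I expect the main obstacle to be making this first identity bijective. The weights are $\prod(c_i-1)$ rather than $\prod(c_i-1)!$, so the substrings are \emph{not} literally cycles and one cannot simply send a block to a cyclic arrangement of its letters; the skew-diagram encoding is what repairs this, since the single bar-split in each later substring supplies exactly the one distinguished descent needed to convert an ordered, internally decreasing block into a border strip, after which concatenating strips recovers a derangement with its cycles in canonical order. The delicate bookkeeping will be reconciling the asymmetry of the first substring (all negative, weight $c_1$ rather than $c_1-1$, and absorbing the extra letter $0$) with the marked letter and the negated fixed points, while simultaneously tracking the $2^{\,n-c_1}$ signs; getting these decorations and signs to line up exactly is where the argument must be airtight.
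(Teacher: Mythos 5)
Your proposal is correct and follows essentially the same route as the paper: reduce via the $R$-labeling to counting falling chains $\Fall_n$, group them by the first (all-negative, $0$-containing) substring to extract the factor $n$ and the forced negative fixed points, convert the later bar-split substrings into the cycles of a derangement via the hook/skew-diagram encoding, and match the $2^{n-c_1}$ free signs against the permanent expansion $D_{n-1}^{\pm}=\sum_{\tau\in\Sym_{n-1}}2^{(n-1)-\operatorname{fix}(\tau)}$. The block-to-cycle bijection you defer as ``the heart of the matter'' is precisely what the paper supplies in Theorem~\ref{theorem_bijection_fixed_point} (breaking each cycle at the ends of its descent runs, recording the break point by the bar-split, and reversing the blocks), with your identity $\binom{n}{f+1}(f+1)=n\binom{n-1}{f}$ being the algebraic content of the paper's $(F_i,\tau)$ marking device, so your skeleton together with that construction is the paper's proof.
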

As a corollary
to Theorem~\ref{theorem_permanent}, 
we can slightly modify our proofs to give a bijective
proof of Jonsson's result (Theorem~\ref{theorem_Jonsson}).
\begin{corollary}
There is an explicit bijection 
implying that 
$$
  \mu(\Rees(B_n,C_n)) = (-1)^{n+1} \cdot D_n.
$$
\end{corollary}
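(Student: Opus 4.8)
The plan is to transcribe the whole development of Sections~\ref{section_R_labeling}--\ref{section_bijection} for $\Rees(B_n,C_n)$ in place of $\Rees(\mathscr{C}_n,C_{n+1})$. The only genuine structural change is that an atom of the Boolean algebra is a singleton $\{a\}$, which already singles out its element $a$, whereas an atom of the cubical lattice is a vertex of the cube distinguishing no coordinate. Accordingly I would label the bottom edge $\hz\prec(\{a\},1)$ by $a$ rather than by $0$, label a cover $(S,i)\prec(S\cup\{a\},i)$ by $a$, label $(S,i)\prec(S\cup\{a\},i+1)$ by the barred symbol $\overline{a}$, and label $(S,i)\prec\ho$ by $n+1$, using the same label poset $\{1,\ldots,n,n+1\}\times\{0,1\}$ but now carrying no signs. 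The verification that this is an $R$-labeling is the sign-free specialization of the proposition already proved for $\mathscr{C}_n$: in any interval the unique rising chain adjoins the available elements in increasing order and defers every bar.

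Next I would prove the Boolean analog of Proposition~\ref{proposition_chains_and_permutations}. Since $\Rees(B_n,C_n)$ has rank $n+1$, a maximal chain has $n+1$ edges, and because the bottom edge now carries the element of the chosen singleton, the word of labels is a \emph{barred permutation} $\pi_1\cdots\pi_n$ of $\{1,\ldots,n\}$, augmented by a final symbol $n+1$, whose underlying permutation lies in $\Sym_n$ and whose first entry $\pi_1$ is forced to be unbarred. I expect the falling chains to be exactly those barred permutations obeying the same two descent rules as in Proposition~\ref{proposition_chains_and_permutations}: an unbarred entry forces a descent, and a barred entry forces the next entry to be unbarred or to descend. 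Crucially there is now no augmented $0$ at the front of the word; this single omission is what drives the entire difference between the two theorems.

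I would then run the block decomposition from the proof of Theorem~\ref{theorem_permanent}. Splitting a falling word into maximal substrings of the shape ``unbarred run followed by barred run'' yields a composition $c=(c_1,\ldots,c_k)$ of $n$ together with the trailing block $\{n+1\}$. Choosing the elements of each block contributes $\binom{n}{c_1,\ldots,c_k}$, and locating the first barred entry inside a block contributes $c_i-1$, since the leading entry of every block is forced unbarred; in contrast to the cubical case the first block is no longer exempt from this constraint (there is no $0$ preceding it) and hence contributes $c_1-1$ rather than $c_1$. With no signs to assign there is also no factor $2^{n-c_1}$. Thus I predict
$$
   \mu(\Rees(B_n,C_n))=(-1)^{n+1}\sum_{c}\binom{n}{c_1,\ldots,c_k}\prod_{i=1}^{k}(c_i-1),
$$
the sign being $(-1)^{n+1}$ by Theorem~\ref{theorem_mobius_chains} since the rank is $n+1$.

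It remains to identify the inner sum with $D_n$. Any composition with a part equal to $1$ contributes zero, mirroring the fact that a singleton block would create a fixed point; this vanishing is the combinatorial signature of derangements. To keep the correspondence explicit, as the statement demands, I would mimic the skew-diagram construction of Section~\ref{section_bijection} after discarding the sign column: the effect is to replace the signed-derangement permanent underlying Theorem~\ref{theorem_permanent} by the permanent of the matrix with $0$'s on the diagonal and $1$'s off it, which equals $D_n$, and the skew-diagram reading then furnishes the explicit bijection between falling chains of $\Rees(B_n,C_n)$ and fixed-point-free permutations in $\Sym_n$. The main obstacle is bookkeeping rather than ideas: one must confirm that deleting the augmented $0$ really does turn $c_1$ into $c_1-1$ and eliminate the factors $2^{n-c_1}$, so that the cubical value $(-1)^n\,n\,D_{n-1}^{\pm}$ degenerates exactly to $(-1)^{n+1}D_n$, and that the skew diagrams remain well defined once their first column and sign labels are suppressed.
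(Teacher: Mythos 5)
Your proposal is correct and takes essentially the same route as the paper: falling chains of a sign-free $R$-labeling of $\Rees(B_n,C_n)$ are the barred permutations with $\pi_1$ unbarred, these decompose into blocks (hooks) of size at least two, and the explicit bijection to fixed-point-free permutations is precisely Theorem~\ref{theorem_bijection_fixed_point}. The only difference is that you rebuild the labeling and falling-chain analysis from scratch, whereas the paper gets them for free by identifying $\Rees(B_n,C_n)$ with the upper order ideal generated by an atom of $\Rees(\mathscr{C}_n,C_{n+1})$ and restricting the existing $R$-labeling (which also explains why the signs and the factor $2^{n-c_1}$ disappear).
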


In order to prove Theorem~\ref{theorem_permanent},
we will work with skew diagrams
associated to falling double augmented barred and signed permutations.
In Section~\ref{section_basis} we will use these skew diagrams to describe 
$\Delta(\Rees({\mathscr C}_n, C_{n+1}))$,
the order
complex of the Rees product of the cubical lattice with the chain,
in the spirit of Wachs' work with the 
$d$-divisible partition lattice~\cite{Wachs}.
We will also use these diagrams to construct an explicit basis
for the homology of $\Rees({\mathscr C}_n, C_{n+1})$.

Besides the interest in the bijection itself
to prove Theorem~\ref{theorem_permanent}, these diagrams
allow
us  to find explicit bases for
the integer homology
$\widetilde{H}_n(\Delta(\Rees(\mathscr{C}_n,C_{n+1})),\Zzz)$ 
indexed by the falling augmented signed barred
permutations.

We begin by recalling some 
objects from combinatorial representation theory.
For background material in this area,
we refer to Sagan's book~\cite{Sagan}.
Let $(\lambda_1, \ldots, \lambda_k) \vdash n$ be a partition of the
integer~$n$ with $\lambda_1 \leq \cdots \leq \lambda_k$.
Recall the Ferrers diagram of $\lambda$ consists of~$n$ 
boxes where row $i$ has $\lambda_i$ boxes 
for $i = 1, \ldots, k$ and all the
rows are left-justified.
Given two Ferrers diagrams
$\mu \subseteq \lambda$, the
{\em skew diagram} $\lambda / \mu$
is the set of all boxes
$\lambda/\mu = \{b \:\: : \:\: b \in \lambda \mbox{ and } b \notin \mu\}$.

\setcounter{figure}{0}
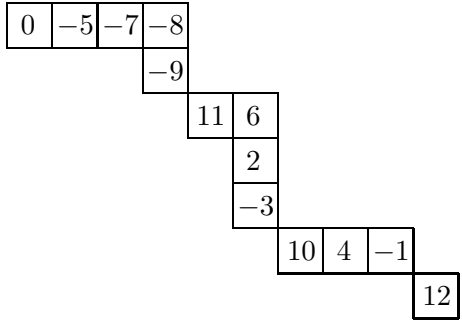
\begin{figure}
\label{figure_diagram}
\setlength{\unitlength}{0.6 mm}
\begin{center}
\begin{picture}(100,70)(0,0)
\put(0,70){\line(1,0){40}}
\put(0,60){\line(1,0){40}}
\put(30,50){\line(1,0){30}}
\put(40,40){\line(1,0){20}}
\put(50,30){\line(1,0){10}}
\put(50,20){\line(1,0){40}}
\put(60,10){\line(1,0){40}}
\put(90,0){\line(1,0){10}}

\put(0,60){\line(0,1){10}}
\put(10,60){\line(0,1){10}}
\put(20,60){\line(0,1){10}}
\put(30,50){\line(0,1){20}}
\put(40,40){\line(0,1){30}}
\put(50,20){\line(0,1){30}}
\put(60,10){\line(0,1){40}}
\put(70,10){\line(0,1){10}}
\put(80,10){\line(0,1){10}}
\put(90,0){\line(0,1){20}}
\put(100,0){\line(0,1){10}}

\put(3,3){\put(0,60){$0$}}
\put(1,3){\put(10,60){$-5$}}
\put(1,3){\put(20,60){$-7$}}
\put(1,3){\put(30,60){$-8$}}
\put(1,3){\put(30,50){$-9$}}
\put(2,3){\put(40,40){$11$}}
\put(3,3){\put(50,40){$6$}}
\put(3,3){\put(50,30){$2$}}
\put(1,3){\put(50,20){$-3$}}
\put(2,3){\put(60,10){$10$}}
\put(3,3){\put(70,10){$4$}}
\put(1,3){\put(80,10){$-1$}}
\put(2,3){\put(90,0){$12$}}

\end{picture}
\end{center}
\caption{The skew diagram corresponding to
the falling double augmented barred signed permutation
$\pi = 0 \,\, -5 \,\,
-7 \,\, \overline{-8} \,\, \overline{-9} \,\, 
11 \,\, \overline{6} \,\, \overline{2} \,\,
\overline{-3} \,\, 10 \,\, 4 \,\, \overline{-1} \,\, 12$
in $\barredsigned{11}$.}
\end{figure}

For us, a {\em hook} is a skew diagram of the form
$\lambda / \mu $ where
$\lambda = ((h+1)^v)$ and
$\mu = (h^{(v-1)})$.
We will be interested in skew diagrams consisting of a disjoint
union of hooks.  More precisely, 
let 
$c = (c_1, \ldots, c_k)$ be a composition of $n$
with
$c_i = u_i + b_i$, for $i = 1, \ldots, k$
where $u_1 \geq 0$, $u_i > 0$ for $i = 2, \ldots, k$,
and $b_i > 0$ for $i= 1, \ldots, k$.
Form the partitions
$\lambda = (\lambda_1, \ldots, \lambda_k)$
and
$\mu = (\mu_1, \ldots, \mu_k)$
where
$\lambda_i =  (u_1 + \cdots + u_i  + i)^{b_i}$
for
$1 \leq i \leq k$,
$\mu_i = ((u_1 + \cdots + u_i + i -1)^{b_i - 1}, u_1 + \cdots + u_i + i)$
for $1 \leq i \leq k-1$,
and
$\mu_k = (u_1 + \cdots + u_k + k -1)^{b_k - 1}$.
The skew diagram $\lambda / \mu$ is then a union of
$k$ hooks where the southeast corner of the last
box of the $i$th hook touches the northwest corner of the first
box of the $(i+1)$st hook.  
We call such a diagram an 
{\em unsigned barred
permutation 
skew diagram}.
We call a filling of the 
$n$ boxes with the elements $\{1, \ldots, n\}$
{\em standard} 
if the rows are decreasing when read from left to right
and the columns are decreasing
when read from top to bottom.
If we insert a box labelled $0$ in front of the first horizontal row
and add a box labelled $n+1$ as the new last hook,
then we call such a filled diagram a
{\em standard double augmented unsigned barred skew diagram}.
Given a double augmented unsigned barred permutation that is falling,
recall that it consists of strings of
unbarred and barred elements concatenated together.
Given such a falling permutation,
one forms the 
standard skew diagram by representing  the 
first string of unbarred elements 
as the first horizontal string of boxes in the first hook
concatenated with 
the same number of vertical boxes
as the number of  barred elements in the first string
of the permutation.  Note that the $i$th hook 
has $u_i + 1$ horizontal boxes, where
$u_i$ is the number of unbarred elements in the first
string of the permutation.
See Figure~1.

\begin{theorem}
\label{theorem_bijection_fixed_point}
There exists an explicit bijection between the set of all fixed point free
permutations in the symmetric group on $n$ elements
and the set of all standard skew diagrams $\lambda/\mu$ having
$n$ boxes 
and hooks  of size greater than $1$.
\end{theorem}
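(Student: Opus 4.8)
The plan is to prove the statement by reducing the diagram side to a compact piece of combinatorial data, checking that this data is equinumerous with derangements, and then upgrading the count into an explicit invertible map. First I would translate a standard skew diagram with $n$ boxes and all hooks of size greater than $1$ into the language already in place: by the $\lambda/\mu$ construction such a diagram is the same as an ordered set partition $(B_1,\ldots,B_k)$ of $\{1,\ldots,n\}$ into blocks of sizes $c_1,\ldots,c_k\geq 2$, together with a choice, for each block of size $c_i$, of one of $c_i-1$ admissible hook shapes. The standard filling is then automatic: inside a single hook the rows and columns decrease and the corner is the smallest entry of its row and the largest of its column, so the entries of a hook are forced once its value set and its bend are fixed.

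As a guide and as an independent check I would first verify equinumerosity. The number of such diagrams is
$$\sum_{c}\binom{n}{c_1,\ldots,c_k}\prod_{i=1}^{k}(c_i-1),$$
summed over all compositions $c=(c_1,\ldots,c_k)$ of $n$ with every part at least $2$. Regarding a block of size $m$ as carrying weight $m-1$, its exponential generating function is $\sum_{m\geq 2}(m-1)\frac{x^m}{m!}=(x-1)e^x+1$, so an ordered sequence of blocks is counted by $\frac{1}{1-((x-1)e^x+1)}=\frac{e^{-x}}{1-x}$. This is exactly the exponential generating function of the derangement numbers $D_n$, so the two families have the same size and a bijection must exist; the weighted-block picture is precisely what the explicit map has to realize.

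The heart of the proof is the bijection itself. I would build a map sending the hooks $B_1,\ldots,B_k$ with their bends to a fixed point free $\sigma\in\Sym_n$, using the horizontal and vertical arm of each hook to prescribe the images of its entries with the corner acting as the hinge between the two arms, chosen so that no entry is sent to itself; the requirement that each hook have size greater than $1$ is exactly what forbids an entry fixed by $\sigma$. For the inverse I would recover the blocks and bends from $\sigma$ by a canonical left-to-right scan, arranging that a fixed point of $\sigma$ would force a forbidden hook of size $1$. A convenient bookkeeping device is the permanent model for $D_n$: it is the permanent of the matrix with $0$ on the diagonal and $1$ off it, and a nonzero term is just a choice of one off-diagonal entry in every row and column, that is, a fixed point free permutation, which the hook data should select.

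The step I expect to be the main obstacle is making the correspondence genuinely bijective rather than merely equinumerous. The diagrams form an ordered sequence of blocks, each contributing only the single factor $c_i-1$, whereas a derangement is most naturally an unordered set of cycles, and a cycle of length $m$ carries $(m-1)!$ internal arrangements rather than $m-1$; the two descriptions agree only through the shared generating function $\frac{e^{-x}}{1-x}$ after a global redistribution of information. The delicate task is to define the map so that the ordering of the hooks together with the single bend per hook encode, and can be decoded from, the entire cyclic structure of $\sigma$, and to check that the hook-size condition transfers exactly to the fixed point free condition. I would pin the precise rule down on the small cases $n=2,3,4$, where the diagrams number $D_2=1$, $D_3=2$, and $D_4=9$, before proving well-definedness and invertibility in general.
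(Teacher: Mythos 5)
Your reduction of the diagram side is correct: a standard skew diagram with hooks of sizes $c_1,\ldots,c_k\geq 2$ is equivalent to an ordered set partition of $\{1,\ldots,n\}$ together with a bend choice ($c_i-1$ options per hook), since the standard filling of a hook is forced once its value set and bend are fixed; and your generating-function computation $\frac{e^{-x}}{1-x}$ does verify equinumerosity with the derangement numbers. But that is where the proposal stops, and equinumerosity is not the content of the theorem: the statement asserts an \emph{explicit} bijection, which is exactly what the paper constructs and then reuses (to give the bijective proof of Jonsson's theorem and to index the homology basis). Moreover, the construction you sketch---each hook prescribing the images of its own entries, with the corner as a ``hinge''---cannot be repaired: if the entries of a hook of size $m$ are permuted among themselves, the hook's $m-1$ possible shapes are far too few to encode the $(m-1)!$ cycles (let alone the $D_m$ derangements) on those entries once $m\geq 4$. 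This is precisely the tension you flag in your final paragraph, but flagging it and deferring to small cases is not a resolution; as written, the plan contains no mechanism for encoding a derangement's cycle structure in the diagrams.

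The missing idea, which is the heart of the paper's proof, is that hooks do not correspond to cycles at all: they correspond to descent runs \emph{inside} cycles, so a single cycle maps to several hooks. The paper writes the fixed-point-free permutation in standard cycle form (each cycle led by its minimum, cycles ordered by increasing minima), breaks each cycle into blocks in front of the last descent of each descent run, rewrites each block in decreasing order, and encodes the rank of the block's leading element among the block's entries by the number of barred elements, i.e., by the length of the hook's vertical arm (for the first block of a cycle, the rank of its second element is used); finally the blocks of each cycle are reversed and the cycles concatenated. Invertibility then rests on a second idea your proposal also lacks: cycle boundaries are recoverable from the diagram because each cycle's minimum sits in a predictable hook, so one cuts after the hook containing $1$, then after the hook containing the smallest element to the right of that cut, and so on. In this way the ordering of the hooks together with the bar counts jointly carry the $(m-1)!$-fold internal data of each cycle, resolving exactly the $m-1$ versus $(m-1)!$ mismatch you identified. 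Without such a concrete rule (this one or an equivalent), your argument establishes only the abstract existence of a bijection, not the theorem.
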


\begin{proof}
We describe an algorithm to move between these two sets.
The idea is to first break
a cycle  
at the 
end of each of its descent runs
to form blocks.
Each of these blocks will  become a hook
in the resulting skew diagram.
The next step is to use the first element of each block
(for the first block, use the second element)
to determine which elements will be barred in a given block.
The third step is to reverse the order of the blocks.
The fact that the original first block contained the smallest element
in the given cycle will enable us to recover the 
complete cycle decomposition
of a permutation from its skew diagram in the general case
when a permutation has more than one cycle.

We first consider the case where $\pi = (\pi_1, \ldots, \pi_n) \in \Sym_n$ 
consists of a single
cycle of length $n$ with $\pi_1 = 1$, that is, the smallest
element of the set $\{\pi_1, \ldots, \pi_n\}$.

\begin{enumerate}

\item 
Identify the descents within the cycle.  
For each run of consecutive
descents,
say $[i,j] = i, i+1, \ldots, j$, break the permutation 
in front of the last descent in the run, 
that is,
the $(j-1)$st position
provided this 
does not create a first block having size one.

\item 
Suppose reading from left to right the first element in
the $i$th block is $m_{i_j}$, where 
the elements in this block have the linear order
$m_{i_1} < m_{i_2} < \cdots$.
(For the case of the first block, let $m_{1_j}$ be
the second element in this block when reading from
left to write
and where the block elements have
linear order
$m_{1,1} < m_{1,2} < \cdots$.)
Rewrite the elements in the block in 
decreasing order and place bars over
each of the last $j-1$ elements.

\item 
Reverse the order of the blocks, that is,
if $B_1 | B_2 | \cdots | B_k$ is the
original block decomposition, reverse this
to
$B_k | B_{k-1} | \cdots | B_1$.
Finally, remove the vertical block separators.

This yields the union of unsigned hooks, where a hook consists of the run
of unbarred elements followed by the run of barred elements.

\end{enumerate}

\begin{example} 
As an example, let $\pi = (135764928) \in \Sym_9$.  We have
\begin{eqnarray*}
\pi &\rightarrow& 1357 | 64 | 928\\
    &\rightarrow& 1357 | 46 | 298\\
    &\rightarrow& 753\bar{1} | 6 \bar{4} | 9 \bar{8}\bar{2}\\ 
    &\rightarrow& 9 \bar{8}\bar{2} 6 \bar{4} 7 5 3 \bar{1}  
\end{eqnarray*}
\end{example}

If a permutation consists of more than one cycle,
without loss of generality we may assume the permutation
is written in standard cycle notation,
that is,
each cycle is written so that it begins with the smallest
element in its cycle and the cycles are then ordered 
in increasing order by the smallest
element in each cycle.
Given such a permutation,
apply the algorithm to each individual cycle.
Concatenate the resulting barred words using the original
order of the cycles.

We can reverse this process beginning with a
standard 
unsigned skew diagram.
\begin{enumerate}

\item 
Given a standard unsigned skew diagram,
we will separate it into cycles based on the
  minimal element.  
Break the diagram after the hook containing the element $1$.
Next, break the diagram after the hook containing the smallest element
occurring to the right of the first break.
Then break after the hook containing the smallest element 
to the right of the second break.
Continue this process until there is a
break at the end of the diagram.
These breaks now correspond to 
individual cycles in the final permutation.

\item 
Within each of these breaks,
put parentheses around the elements of each hook
and reverse the order
of the hooks, that is, if
break $i$ has hooks
$h_{i,1} h_{i,2}\cdots h_{i,j}$ then
reverse these to
$h_{i,j} h_{i,j-1} \cdots h_{i,1}$.

\item 
In each parenthetical piece, remove the bars and reorder the 
elements by the
following rule.  
The now unbarred elements in each parenthesis can
be linearly ordered, say 
$m_{i_1} < m_{i_2} < \cdots < m_{i_k}$.  
If there were bars over $j$ numbers in this
piece,
reorder the elements as
$m_{i_1} m_{i_{j+1}} m_{i_2} \cdots m_{i_k}$ 
if $j \not= k$ 
and $m_{i_1} m_{i_k} m_{i_2} \cdots m_{i_{k-1}}$ if $j=k$.

\item 
Within each cycle, leave the vertical bars fixed for the moment
and switch the first two
numbers of all
the parenthetical pieces except the first piece which begins the cycle.  
Remove the inner parentheses and concatenate the pieces within
each vertically barred piece into one cycle.
\end{enumerate}

These processes we have described
are the inverse of each other.  Thus we 
have a bijection.
\end{proof}

\begin{example}
Let $8\bar{7}\bar{2} 6 \bar{1} 9\bar{5} 4
\bar{3}$ be a falling barred permutation.
The algorithm gives:
\begin{eqnarray*}
8\bar{7}\bar{2} 6 \bar{1} 9\bar{5} 4
\bar{3} &\rightarrow& 8\bar{7}\bar{2} 6 \bar{1}|
9\bar{5} 4\bar{3} |\\
&\rightarrow& (6\bar{1})(8 \bar{7} \bar{2}) | (4
\bar{3}) (9 \bar{5})\\
&\rightarrow& (16)(287)|(34)(59)\\
&\rightarrow& (16827)(3495)
\end{eqnarray*}
\end{example}

Let $F\subseteq [n-1]$ be the set of fixed points for a 
permutation $\pi \in \Sym_{n-1}$.  
We will build $n$ ordered pairs, $(F_i, \tau)$
where $i=1, \ldots, n$ and $\tau$ is a partial permutation on
$n-|F|-1$ elements from the set~$[n]$.
Set 
$$ 
F_i =  \left\{ \begin{array}{ll}
               F \cup \{i\} & \mbox{ if } i \notin F,\\
               F \cup \{n\} & \mbox{ if } i \in F,
               \end{array}
        \right.
$$
where $i = 1, \ldots, n$.
To define $\tau$, consider the partial permutation $\widehat{\pi}$
consisting of  the
cycles of
$\pi$ with sizes greater than $1$.  The
elements in these cycles can be linearly ordered as 
$m_{i_1} < m_{i_2} < \cdots < m_{i_{n-|F|-1}}$.  
The elements of $[n] - F_i$ also 
can be linearly ordered
as 
$l_{i_1} < \cdots < l_{i_{n-|F|-1}}$.  Define a map~$\Psi$ which
sends $m_{i_j} \mapsto l_{i_j}$.  Set $\tau=\Psi(\widehat{\pi})$.
Let $F_{\pi} = \{(F_i,\tau) :  i = 1, \ldots, n\}$
 so that $|F_{\pi}|=n$.

\begin{proposition}
There exists a bijection between 
$\{F_{\pi} : \: \pi \in \Sym_n\}$ and the
set of standard unsigned skew diagrams 
where each hook except the first  has size
greater than one.
\end{proposition}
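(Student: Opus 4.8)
The plan is to build the correspondence by grafting the bijection of Theorem~\ref{theorem_bijection_fixed_point} onto one extra hook that absorbs the fixed points of $\pi$. The guiding idea is that a fixed point of $\pi$ would try to become a size-one hook, which is forbidden in the interior of a diagram; so the construction of $F_{\pi}$ sweeps all the fixed points, together with one distinguished element, into the first hook, whose size is unrestricted, while the nontrivial cycles of $\pi$ produce the genuine hooks of size greater than one. Throughout I read $\pi$ as ranging over $\Sym_{n-1}$ (matching the construction of $F_\pi$), and I regard $\{F_{\pi}\}$ as the family of the $n$ pairs it produces as $i$ runs over $\{1,\dots,n\}$.

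\emph{Forward map.} Given a pair $(F_i,\tau)\in F_{\pi}$, the partial permutation $\tau=\Psi(\widehat{\pi})$ is fixed point free on the ground set $[n]\setminus F_i$, since $\widehat{\pi}$ collects exactly the cycles of $\pi$ of length at least two. First I would apply Theorem~\ref{theorem_bijection_fixed_point} to $\tau$ (using the order isomorphism $[n]\setminus F_i\cong[\,n-|F|-1\,]$ supplied by $\Psi$), obtaining a standard unsigned skew diagram all of whose hooks have size greater than one; these become the hooks past the first of the output. Then I would prepend a first hook built from the $|F|+1$ elements of $F_i$, written in decreasing order and barred according to a prescribed rule. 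Because the first hook is allowed to have size one, this is legal for every value of $|F|$, in particular for $F=\emptyset$ (where the first hook is a single box and $\tau$ is a derangement of the remaining $n-1$ letters). One checks that the output is standard, has $n$ boxes filled by $[n]$, and has each hook past the first of size greater than one.

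\emph{Inverse map and being mutually inverse.} Given such a diagram, I would strip off the first hook, recording its element set $E$ and its barring, and feed the remaining sub-diagram into the inverse of Theorem~\ref{theorem_bijection_fixed_point} to recover a fixed point free partial permutation on $[n]\setminus E$. Undoing the order-preserving relabeling returns $\widehat{\pi}$ on its original ground set, and adjoining the letters of $F=E\setminus\{\text{distinguished element}\}$ as one-cycles reconstructs $\pi$ and the index $i$. The two maps are mutually inverse once standardness is checked to be preserved in both directions and the relabeling $\Psi$ is checked to invert exactly.

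\emph{Main obstacle.} The crux is recovering the member $(\pi,i)$ of the family, not merely the set $F_i$, because $i\mapsto F_i$ is not injective: the $|F|+1$ indices $i\in F\cup\{n\}$ all yield the same set $F_i=F\cup\{n\}$. The index must therefore be encoded in the internal structure of the first hook. The natural device is that for these colliding indices the first hook on $F\cup\{n\}$ runs through its $|F|+1=c_1$ admissible barrings, one per value of $i$, whereas for $i\in[n-1]\setminus F$ the set $F_i=F\cup\{i\}$ already determines $i$. The heart of the proof is to fix the first-hook barring rule so that it is simultaneously consistent with the forward construction and invertible, and to confirm that across all $\pi$ every admissible pair (first-hook element set, barring) is produced exactly once. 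A count confirms the result: each $\pi$ produces exactly $n$ distinct diagrams, and $\sum_{\pi\in\Sym_{n-1}}n=n!$ equals $\sum_{c}{n\choose {c_1,\dots,c_k}}\,c_1\prod_{i=2}^{k}(c_i-1)$, the number of standard skew diagrams with hooks past the first of size greater than one, so the explicit maps give the desired bijection.
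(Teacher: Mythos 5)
Your architecture is the same as the paper's: push $\tau$ through the bijection of Theorem~\ref{theorem_bijection_fixed_point} (after the order-preserving relabeling) to build the hooks of size greater than one, and encode the remaining data in a first hook on the element set $F_i$. But you have misdiagnosed where the index $i$ must be encoded, and the barring rule you leave unspecified is precisely the content of the proof. Your claim that ``for $i\in[n-1]\setminus F$ the set $F_i=F\cup\{i\}$ already determines $i$'' is false: the set $F\cup\{i\}$ does not remember which of its elements is the adjoined index and which are fixed points of $\pi$, and the remaining hooks cannot disambiguate, since each splitting of the set yields a different but equally valid preimage. Concretely, take $n=5$, $\pi=(1)(2\,3\,4)$ with $i=2$, and $\pi'=(2)(1\,3\,4)$ with $i'=1$. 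Both produce the same first-hook set $F_i=\{1,2\}$, and after relabeling to $[5]\setminus\{1,2\}=\{3,4,5\}$ both produce the same partial permutation $\tau=(3\,4\,5)$, hence identical remaining hooks. If the barring of the first hook does not separate them, your map sends these two pairs to the same diagram, so it is not injective; dually, the diagram whose first hook on $\{1,2\}$ carries the other admissible barring acquires no preimage, so it is not surjective. Your closing cardinality argument cannot repair this: equal counts yield a bijection only once the map is well defined and injective, which is exactly what is missing (and the identity $\sum_{c}\binom{n}{c_1,\dots,c_k}c_1\prod_{i\geq 2}(c_i-1)=n!$ is itself asserted without proof).

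The fix --- and this is the paper's proof --- is a barring rule that encodes $i$ in \emph{every} case, not only for the colliding indices $i\in F\cup\{n\}$: if $i\notin F$, bar $i$ and every element of $F_i$ smaller than $i$ (so the number of vertical boxes equals the rank of $i$ within $F_i$; for $i=n$ the first hook is entirely vertical), while if $i=f_j$ is the $j$th smallest fixed point, bar the smallest $j$ elements of $F\cup\{n\}$. With this rule, the number of barred boxes recovers $i$ whether or not $n$ lies in the first-hook set; from $i$ one recovers $F$, hence the ground set $[n-1]\setminus F$ on which to un-relabel $\tau$ into $\widehat{\pi}$, and one checks that every admissible pair (first-hook set, barring) occurs exactly once. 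In the counterexample above, the rule assigns two vertical boxes to $(\pi,2)$ and only one to $(\pi',1)$, so the diagrams are distinct. You correctly isolated the collision $F_i=F\cup\{n\}$, but the same encoding problem is present for every index, and solving it uniformly is the heart of the proposition.
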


\begin{proof}  
Given a permutation $\pi$
with  fixed point set $F$ and one ordered pair
$(F_i,\tau)$, we will define a map which sends $F_i$ to the first
hook of the diagram and which sends $\tau$ to the rest of the 
diagram.
To create the first part of the map, 
write the elements of $F_i$ in decreasing
order.  To place the bars, consider two cases.
\begin{enumerate}
\item 
If $i \notin F$ place bars over the element
$i$ and every element less than
  $i$.

\item 
If $i \in F$ we use the 
linear total order on $F$, say
$f_1 < \cdots <   f_{|F|}$.  
We have $i = f_j$ for some $j=1,\ldots, |F|$.
Place bars over 
  the smallest $j$ elements.
\end{enumerate}
This map can be reversed given the first piece of some unsigned 
skew diagram.

To determine the rest of the diagram, we use $\tau$, a partial permutation
on an $n-|F|-1$
element subset of $[n]$.  There is a bijection between all such
partial permutations and the set of fixed point free permutations
in $\Sym_{n-|F|-1}$.  Use the linear order on the elements of
$\tau$, that is, these elements can be written 
$m_{i_1} < \cdots < m_{i_k}$.  
Let $\Phi$ be a map between these two sets where
$\Phi(m_{i_j}) =j$.  Note
that because the partial permutation $\tau$
can be written as a product of cycles with no one-cycles, then
$\Phi(\tau)$ is also a fixed point free product of cycles.  Composing
$\Phi$ with the algorithm above, we can go from a partial permutation
$\tau$ to the rest of the diagram having hook sizes greater than 1.
\end{proof}

To prove 
Theorem~\ref{theorem_permanent}, we sign the first 
hook (which consists of the horizontal piece $0$ concatenated
with the vertical piece) in one way, that is, with all negative
signs, and then reorder the elements
in decreasing order.
For the remaining hooks,
we can sign these remaining elements in
$2^{n-|F|-1}$ ways and within each hook reorder 
them in a decreasing manner in one way.

As a corollary, we can slightly modify our proofs to give a bijective
proof of Jonsson's result (Theorem~\ref{theorem_Jonsson})
for the M\"obius function
of the Rees product of the Boolean algebra with the chain.

\vspace{2mm}
\begin{proof_special}{Theorem~\ref{theorem_Jonsson}}
It is enough to observe that Rees$(B_n, C_n)$ is isomorphic to the
upper order ideal generated by any atom of Rees$(\mathscr{C}_n,
C_{n+1})$.  Hence Rees$(B_n,C_n)$ inherits the $R$-labeling of
Rees$(\mathscr{C}_n, C_{n+1})$.  The maximal chains in $\Rees(B_n,C_n)$
are described by augmented barred permutations, that is,
permutations of the form
$\pi= \pi_1 \cdots \pi_n \pi_{n+1}$ with
$\pi_{n+1} = n+1$,
$|\pi| = |\pi_1| \cdots |\pi_n| \in \Sym_n$
(unlike before,
here  $|\pi_j|$ denotes removing any bar and negative
sign occurring in $\pi_j$),
$\pi_1$ not barred and each of the elements
$\pi_2, \ldots, \pi_n$ may be barred.  The falling chains correspond
to unsigned labeled skew diagrams
having hooks of size greater than or equal to $2$ which are augmented
at the end with a block containing the element $n+1$.
Theorem~\ref{theorem_bijection_fixed_point} now applies to prove the result.
\end{proof_special}

Shareshian and Wachs~\cite[Theorem~6.2]{Shareshian_Wachs_poset_homology}
have proved a dual version of 
Proposition~\ref{proposition_recursion}
where they instead work with 
a doubly-truncated  face lattice of the crosspolytope
$\mathscr{O}_n$.
To state their results we use 
$\Rees^{-}(P,Q)$ to indicate
the maximal element is removed from $P$ before taking the
Rees product of two graded posets $P$ and $Q$, that is, 
$
   \Rees^{-}(P,Q) = \Rees(P - \{\ho\},Q).
$

\begin{theorem}
[Shareshian--Wachs]
\label{theorem_crosspolytope}
For all $n$,
$$
   \dim \widetilde{H}_{n-1}(\Delta(\Rees^{-}(\mathscr{O}_n, C_n))) 
    = D_n^{\pm}.
$$
\end{theorem}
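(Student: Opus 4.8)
The plan is to reduce the computation of $\dim \widetilde{H}_{n-1}$ to a count of falling maximal chains, using Cohen--Macaulayness to concentrate the homology in top degree, and then to perform that count by adapting, to the truncated dual setting, the edge-labeling and skew-diagram machinery developed for the cube in Sections 4 and 5.

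First I would record the relevant combinatorics. Since $\mathscr{O}_n$ is the dual of the cubical lattice, a rank $r$ face of $\mathscr{O}_n$ is a signed subset of $[n]$ of cardinality $r$: the empty face is $\hz$, the rank one faces are the $2n$ signed singletons, the rank $n$ faces are the $2^n$ facets (full sign vectors), and the whole polytope is $\ho$. Deleting $\ho$ and forming $\Rees^-(\mathscr{O}_n, C_n)=\Rees(\mathscr{O}_n-\{\ho\},C_n)$ gives a graded poset of rank $n+1$ whose proper elements are pairs $(\sigma,i)$, with $\sigma$ a proper signed face and $i$ a chain coordinate; a maximal chain encodes a complete signed flag, i.e.\ a signed permutation of $[n]$, together with the record of where the chain coordinate increases. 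I would then define the signed analogue of the labeling $\lambda$: a covering that stars a coordinate $\pm a$ receives the label $(\pm a,0)$ or $(\pm a,1)$ according to whether $i$ is fixed or increased, with extremal labels on the edges at $\hz$ and $\ho$. The proof that this is an $R$-labeling is the same interval-by-interval argument as for $\Rees(\mathscr{C}_n,C_{n+1})$: the signs force a unique linear order on the first coordinates and the bar bits a unique weakly increasing order on the second. By Theorem~\ref{theorem_mobius_chains} the number of falling chains equals $(-1)^{n+1}\mu(\hz,\ho)$, and these falling chains are characterized by the descent conditions exactly as in Proposition~\ref{proposition_chains_and_permutations}.

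The heart of the argument, and the step I expect to be the main obstacle, is to show that the number of falling chains is $D_n^{\pm}$. I would rerun the block-and-bar enumeration of Theorem~\ref{theorem_permanent}, but note the two simplifications produced by the truncation. Because we removed $\ho$ \emph{before} taking the Rees product and used the shorter chain $C_n$, there is no family of top coatoms to augment with, so the spurious factor $n$ and the outer block $\{n+1\}$ of the cube's double-augmented permutations disappear; and because the bottom covering now carries the signed label of a genuine vertex rather than the uniform label $0$, the first block is no longer forced to be all negative, so every coordinate may be signed freely. Running the skew-diagram bijection of Theorem~\ref{theorem_bijection_fixed_point} with these signs in place, I would identify the falling chains with the fixed-point-free signed permutations in $\signed{n}$. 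The delicate point is the treatment of underlying fixed points: the falling condition forces any coordinate with $|\pi_i|=i$ to be negative while leaving the others free, which is precisely the weighting recorded by the permanent with $1$'s on the diagonal and $2$'s off the diagonal, so the count is $D_n^{\pm}$. (As a check, for $n=2$ this records the identity signed negatively in both positions together with the freely signed transposition, giving $1+4=5=D_2^{\pm}$.)

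Finally I would invoke the Bj\"orner--Welker theorem (Theorem~\ref{theorem_Bjorner_Welker}). The order complex of the faces of the crosspolytope of dimensions $0$ through $n-1$ is the barycentric subdivision of its boundary sphere $S^{n-1}$ and is therefore Cohen--Macaulay, so $\mathscr{O}_n-\{\ho\}$ is a Cohen--Macaulay poset; since $C_n$ is also Cohen--Macaulay, $\Rees^-(\mathscr{O}_n,C_n)$ is Cohen--Macaulay of rank $n+1$. Hence the reduced homology of its order complex is concentrated in degree $n-1$, and its dimension equals $|\mu(\hz,\ho)|$, which by the $R$-labeling is the number of falling chains. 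Combining this with the previous paragraph gives $\dim \widetilde{H}_{n-1}(\Delta(\Rees^-(\mathscr{O}_n,C_n)))=D_n^{\pm}$.
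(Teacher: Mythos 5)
Your overall architecture---Bj\"orner--Welker Cohen--Macaulayness to concentrate homology in the top degree, then an $R$-labeling/falling-chain count in the spirit of Theorem~\ref{theorem_bijection_fixed_point}---is exactly the route the paper indicates for this theorem, so the strategy is the intended one. The gap is in the key step: the labeling you describe is \emph{not} an $R$-labeling of $\Rees^{-}(\mathscr{O}_n,C_n)$, and the interval-by-interval argument does not transfer ``the same'' from $\Rees(\mathscr{C}_n,C_{n+1})$. In the cube, all labels above a fixed element $(x,i)$ have rigid signs (starring coordinate $a$ is labeled $+a$ or $-a$ according to $x_a$), so the only end with sign freedom is $\hz$, and the uniform bottom label $(0,0)$ kills that freedom by forcing any rising chain out of $\hz$ to use only positive labels. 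In $\Rees^{-}(\mathscr{O}_n,C_n)$ the freedom sits at the top: going up means \emph{adding} signed elements whose signs are not determined by the bottom of the interval. Hence, with an extremal label on the edges into $\ho$ as you propose, every interval $[(\sigma,i),\ho]$ with $|\sigma|<n$ has $2^{n-|\sigma|}$ rising chains (choose arbitrary signs for the missing values, arrange them increasingly, take no bars), and $[\hz,\ho]$ likewise has exponentially many; ``the signs force a unique linear order'' is exactly backwards at these intervals. This is not a cosmetic issue: for $n=2$ your labeling admits only $4$ falling maximal chains (the conditions degenerate to $\pi_1>\pi_2$ with $\pi_2$ barred), whereas a direct computation in $\Rees^{-}(\mathscr{O}_2,C_2)$ gives $\mu(\hz,\ho)=-(1-4+8)=-5$ and $D_2^{\pm}=5$. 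So Theorem~\ref{theorem_mobius_chains} applied to your labeling would output the wrong number; your $n=2$ sanity check verifies the signed-derangement count, not your chain count.

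The repair is the dual of the cube's trick, applied at the top rather than the bottom: label the bottom edges $\hz\prec(\{\epsilon a\},1)$ by $(\epsilon a,0)$, the middle covers as you do, and give the edges $(F,i)\prec\ho$ the label $(0,0)$, where the first coordinates are totally ordered $-n<\cdots<-1<0<1<\cdots<n$; the point is that the top label must lie \emph{between} the negative and positive cover labels, not above them all. Then a rising chain into $\ho$ is forced to add the missing values all with negative signs, in increasing order and with no bars, which restores uniqueness in every interval (the intervals $[\hz,(\tau,j)]$ and $[(\sigma,i),(\tau,j)]$ are fine as in your argument, since there the signs really are dictated by $\tau$). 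With this corrected labeling the falling-chain description changes in one essential way---the last letter need not be barred provided its sign is positive---and redoing your block/skew-diagram enumeration with that condition does produce $D_n^{\pm}$: for $n=2$ the five falling chains are $2\,1$, $2\,\bar{1}$, $2\,\overline{-1}$, $1\,\overline{-2}$, and $-1\,\overline{-2}$. So the theorem is true and your plan can be carried through, but only after the label order and the resulting descent conditions are set up to break the sign symmetry at $\ho$, not by verbatim transfer from the cube.
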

Shareshian and Wachs' original proof follows from the
Bj\"orner--Welker Theorem~\ref{theorem_Bjorner_Welker} and from the
fact that the reduced homology of a Cohen-Macaulay poset vanishes everywhere
except the top dimension, where the dimension is given by the
M\"obius function of the poset.
One can also give a bijective proof along the lines
of Theorem~\ref{theorem_bijection_fixed_point}
using the standard $R$-labeling of the cross-polytope.
For $q$-analogues of 
Theorems~\ref{theorem_Jonsson} and~\ref{theorem_crosspolytope},
see~\cite[Theorem 2.1.6 and Theorem 2.4.5]{Shareshian_Wachs_poset_homology}.

\section{A basis for the homology}
\label{section_basis}
\setcounter{equation}{0}

Let $P$ be a graded poset of rank $n$
with minimal element $\hz$ and
maximal element $\ho$. 
The {\em order complex} (or
{\em chain complex}) of $P$,
denoted $\Delta(P)$, is the simplicial complex 
with vertices given by the elements of
$P - \{\hz, \ho\}$
and $(i-1)$-dimensional faces are given by
chains of $i$ elements
$x_1 < x_2 < \cdots < x_i$
in the subposet
$P - \{\hz, \ho\}$.
See~\cite{Wachs_poset_topology} for further details.
In this section we consider  homological questions for the
order complex of the poset $\Rees({\mathscr C}_n, C_{n+1})$.
A similar analysis for 
the $d$-divisible partition lattice
was done by Wachs~\cite{Wachs}.

\begin{proposition}
The order complex $\Delta(\Rees({\mathscr C}_n, C_{n+1}))$ is
a Cohen-Macaulay complex and
has vanishing homology groups in every dimension except 
for the top dimension.
This is given by
$$
     \dim \widetilde{H}_n(\Delta(\Rees({\mathscr C}_n, C_{n+1})) 
     = n \cdot D_{n-1}^{\pm}.
$$
\end{proposition}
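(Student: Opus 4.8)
The plan is to establish the two claims separately and then combine them. The Cohen-Macaulay assertion is immediate from the general theory: the cubical lattice $\mathscr{C}_n$ is a geometric lattice (hence Cohen-Macaulay), the chain $C_{n+1}$ is trivially Cohen-Macaulay, and so by the Bj\"orner--Welker Theorem~\ref{theorem_Bjorner_Welker} the Rees product preserves this property. One must be slightly careful, since $\Rees(\mathscr{C}_n, C_{n+1})$ as we have defined it involves deleting $\hz$, taking the Rees product, and then adjoining $\hz$ and $\ho$; but this bounded extension does not affect Cohen-Macaulayness of the open interval, so the conclusion stands.

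Once Cohen-Macaulayness is in hand, the homological consequence is standard: for a Cohen-Macaulay poset of rank $n+1$, the order complex $\Delta(\Rees(\mathscr{C}_n, C_{n+1}))$ has reduced homology concentrated in the top dimension, and that top reduced Betti number equals the absolute value of the M\"obius function $|\mu(\Rees(\mathscr{C}_n, C_{n+1}))|$. I would cite this principle from the poset topology literature (e.g.~\cite{Wachs_poset_topology}), exactly as Shareshian and Wachs do in their discussion following Theorem~\ref{theorem_crosspolytope}. The only bookkeeping point is the dimension index: the order complex of a rank $n+1$ poset, after removing $\hz$ and $\ho$, has faces that are chains in the open interval, and its top-dimensional faces correspond to maximal chains, giving top homology $\widetilde{H}_n$. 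This is why the subscript is $n$ rather than $n+1$.

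With these two facts, the numerical value follows directly from Theorem~\ref{theorem_permanent}, which gives $\mu(\Rees(\mathscr{C}_n, C_{n+1})) = (-1)^n \cdot n \cdot D_{n-1}^{\pm}$. Taking absolute values yields
$$
\dim \widetilde{H}_n(\Delta(\Rees(\mathscr{C}_n, C_{n+1}))) = |\mu(\Rees(\mathscr{C}_n, C_{n+1}))| = n \cdot D_{n-1}^{\pm},
$$
using that $n \cdot D_{n-1}^{\pm} \geq 0$.

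I do not expect a genuine obstacle here, since every ingredient has already been assembled earlier in the paper. The one place to be attentive is the indexing convention (dimension $n$ versus rank $n+1$ versus the removal of $\hz$ and $\ho$), because the statement concerns the \emph{open} interval and an off-by-one error in the homological degree would be easy to introduce. The real content of the section lies not in this proposition but in producing an \emph{explicit} basis for $\widetilde{H}_n$, which is presumably the next result; this proposition is best viewed as recording the dimension count that the explicit basis must then match.
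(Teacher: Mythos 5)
Your proposal follows essentially the same route as the paper: invoke the Bj\"orner--Welker theorem for Cohen-Macaulayness, use the standard fact that a Cohen-Macaulay poset has reduced homology concentrated in the top dimension with dimension equal to $|\mu|$, and then plug in Theorem~\ref{theorem_permanent}; your attention to the degree bookkeeping (dimension $n$ for the proper part of a rank $n+2$ bounded poset) is also correct. One correction to your justification, though: the cubical lattice $\mathscr{C}_n$ is \emph{not} a geometric lattice --- it is atomic but fails semimodularity (in the face lattice of a square, two antipodal vertices $a,b$ have $\rho(a)+\rho(b)=2$, yet $a\vee b=\ho$ has rank $3$ and $a\wedge b=\hz$ has rank $0$). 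Its Cohen-Macaulayness should instead be justified by the fact that it is the face lattice of a convex polytope, hence shellable (indeed EL-shellable), and shellable implies Cohen-Macaulay; with that repair your argument matches the paper's proof.
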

This follows by 
a result of Bj\"orner and Welker~\cite{Bjorner_Welker}
that 
the Rees product of any two Cohen-Macaulay posets is also
Cohen-Macaulay.
Furthermore, the absolute value of the
M\"obius function of the poset 
$\Rees({\mathscr C}_n, C_{n+1})$ gives the dimension
of the top homology group
of
$\Delta(\Rees({\mathscr C}_n, C_{n+1}))$.

We next give an
explicit basis for the homology
$\widetilde{H}_n(\Delta(\Rees({\mathscr C}_n, C_{n+1})),\Zzz)$ indexed 
by the falling augmented signed barred
permutations.
Recall that
 $\Fall_n$ denotes the set of falling augmented signed
barred permutations from $\barredsigned{n}$.
For each $\sigma \in \Fall_n$ we  define
a subposet $\Cbasis_{\sigma}$ of 
$\Rees({\mathscr C}_n, C_{n+1})$ as follows.
Let
$m_\sigma =  m_{\sigma,0} \prec m_{\sigma,1}
                          \prec \cdots \prec m_{\sigma,n}$
be the chain in
$\Rees({\mathscr C}_n, C_{n+1}) - \{\hz,\ho \}$ labeled by $\sigma \in \Fall_n$.
For example, for the
double augmented barred signed permutation
$\sigma = \sigma_0 \cdots \sigma_6
        = 0 \:\: -1 \:\: \overline{-3} \:\: 5
            \:\: \overline{2} \:\: \overline{-4}
            \:\: 6$,
we have
$     m_\sigma = (01001,1) \prec (*1001,1)
                          \prec (*1*01,2)
                          \prec (*1*0*,2)
                          \prec (***0*,3)
                          \prec (*****,4)$.

We define the elements of $\Cbasis_{\sigma}$ recursively. 
The rank $0$
elements of $\Cbasis_{\sigma}$ are of the form $(x,1)$,
where $x$ is a $0$-dimensional face of
the $n$-cube.
For $1 \leq i \leq n-1$, the
rank $i$ elements of $\Cbasis_{\sigma}$ are
of the form $(x,j)$,
where $x$ is an $i$-dimensional face of
the $n$-cube
and the second coordinate $j$ is determined according to the
following rules:
\begin{enumerate}
\item[$i$.]
If $\sigma_{i-1}$ is not barred, $\sigma_{i}$ is not barred,
and
$\sigma_{i+1}$ is either barred or unbarred,
then
$j = k$ where $(y,k)$ is any rank $i-1$ element of $\Cbasis_{\sigma}$.

\item[$ii$.]
If $\sigma_{i-1}$ is either barred or unbarred,
and both $\sigma_{i}$ and $\sigma_{i+1}$ are  barred, then
$j = k + 1$ where $(y,k)$ is any rank $i-1$ element of $\Cbasis_{\sigma}$.

\item[$iii$.]
If $\sigma_{i-1}$ is either barred or unbarred,
$\sigma_{i}$ is barred and
$\sigma_{i+1}$ is not barred, then
$j = k$ where $(y,k)$ is a rank $i-1$ element of $\Cbasis_{\sigma}$.
The exception to this rule
is for the $i$-dimensional
element
$x$ occurring in the chain $m_{\sigma}$,
that is,
$m_{\sigma,i} = (x,r)$.
In this case, $m_{\sigma,i}$ becomes  the element $(x,k+1)$ in $\Cbasis_{\sigma}$.

\item[$iv$.]
If $\sigma_{i-1}$ is barred,
$\sigma_{i}$ is not barred,
and $\sigma_{i+1}$ is either barred or unbarred,
then
$j = k + 1$ where $(y,k)$ is any rank $i-1$ element of $\Cbasis_{\sigma}$
different from
$m_{\sigma, i-1}$.
Notice that both
$m_{\sigma,i-1}$ and
$m_{\sigma,i}$ have the same second coordinate, namely $k+1$.

\end{enumerate}
Finally, there are two rank $n$ elements
$(* \cdots *, k)$ and $(* \cdots *, k+1)$,
where $k$ is the second coordinate of any
rank $n-1$ element of $\Cbasis_{\sigma}$.

Define $\widetilde{\mathscr{C}_{n}}$ to be the poset
$\mathscr{C}_{n}- \{\hz\} \cup \{\hat{1'}\}$,
that is,
the face lattice of the $n$-dimensional
cube with its minimal element removed
and adjoined with a second maximal
element $\hat{1'}$ which also covers all the coatoms in
$\mathscr{C}_n - \{\hz\}$.

\begin{theorem}
\label{theorem_suspension_barycentric_cube}
For $\sigma \in \Fall_n$,
the order complex
$\Delta(\Cbasis_{\sigma})$ is isomorphic to the 
suspension of the barycentric subdivison of the
boundary of the $n$-cube.
\end{theorem}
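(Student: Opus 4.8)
The plan is to study the first-coordinate projection $\phi:\Cbasis_{\sigma}\to\mathscr{C}_n$, $\phi(x,j)=x$, and to prove that as an abstract poset $\Cbasis_{\sigma}$ is the poset of nonempty proper faces of the $n$-cube (that is, $\mathscr{C}_n-\{\hz,\ho\}$) together with two mutually incomparable maximal elements, each lying above every face. Granting this, the order-complex assertion is formal: adjoining to a poset $Q$ two incomparable elements that are above everything in $Q$ replaces $\Delta(Q)$ by the join $\Delta(Q)*S^{0}$, since a chain can use at most one of the two new elements; and $\Delta(Q)*S^{0}$ is exactly the suspension $\Sigma\Delta(Q)$. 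As $\Delta$ of the poset of nonempty proper faces of a polytope is by definition its barycentric subdivision, $\Delta(Q)=\mathrm{sd}(\partial(\text{$n$-cube}))$, and the theorem follows.

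First I would handle the ranks $0$ through $n-1$. By construction, at each such rank $i$ the poset $\Cbasis_{\sigma}$ contains exactly one element $(x,j)$ for every $i$-dimensional face $x$, so $\phi$ restricts to a bijection from the rank-$i$ elements onto the $i$-faces. Because a relation in the Rees product forces the first coordinates to be comparable, $\phi$ is automatically order-preserving; the substance is the reverse implication, that each cube covering $x\lessdot x'$ lifts to a strict relation $\phi^{-1}(x)<\phi^{-1}(x')$. Writing the assigned second coordinates as $j,j'$, the Rees condition to check is $j\le j'$ together with $j'-j\le\dim x'-\dim x=1$. Inspecting rules $i$.--$iv$. shows that the second coordinate is either constant from rank $i$ to rank $i+1$ or increases by one, and that the lone exceptional face produced in rule $iii$. already carries the larger value $k+1$; hence $j'-j\in\{0,1\}$ in every case. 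Transitivity of the order then lifts arbitrary face comparabilities, yielding the poset isomorphism onto the nonempty-proper-face poset.

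The second step is the top rank. Here I would first note that for every $\sigma\in\Fall_n$ the entry $\sigma_n$ is barred: since $|\sigma_{n+1}|=n+1>|\sigma_n|$ there is no descent in position $n$, so by the falling condition of Proposition~\ref{proposition_chains_and_permutations} an unbarred $\sigma_n$ is impossible. Consequently rule $iii$.\ cannot apply at rank $n-1$, so the facets all carry one common second coordinate $k$. The two rank-$n$ elements are $(*\cdots*,k)$ and $(*\cdots*,k+1)$; they share a first coordinate and hence are incomparable, and a direct check of the Rees condition (rank gap $1\ge k+1-k$) shows each lies above every facet, and therefore above all of $\Cbasis_{\sigma}$. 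These are the two suspension points.

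Combining the two steps identifies $\Cbasis_{\sigma}$ with $\mathscr{C}_n-\{\hz,\ho\}$ with two incomparable elements above everything, and the join description completes the argument. I expect the main obstacle to be the first step, namely verifying that $\phi$ reflects the order: this rests entirely on the bookkeeping of the four recursive rules and on the fact that the exceptional second coordinates in rules $iii$.\ and $iv$.\ never create a second-coordinate jump larger than the corresponding rank jump. The decisive structural input is that $\sigma_n$ is always barred, which simultaneously pins down the two top elements and guarantees that the facets sit at a single common level.
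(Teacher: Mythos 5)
Your proposal is correct and takes essentially the same route as the paper: its proof defines exactly your first-coordinate ``forgetful'' map, identifying $\Cbasis_{\sigma}$ with $\widetilde{\mathscr{C}_n}$ (the proper part of the cubical lattice with two incomparable maximal elements adjoined), and then concludes by the same suspension observation. If anything, your write-up is slightly more careful than the paper's, since you verify the full Rees gap condition $j'-j\le \rho(x')-\rho(x)$ on covers rather than just $j\le j'$, and your observation that $\sigma_n$ is always barred supplies the justification (implicit in the paper) that all rank $n-1$ elements of $\Cbasis_{\sigma}$ carry a single common second coordinate, so that the two top elements are well defined.
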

\begin{proof}
It is enough to show the posets 
$\Cbasis_{\sigma}$ and $\widetilde{\mathscr{C}_{n}}$ are isomorphic.
Define the ``forgetful'' map
$f: \Cbasis_{\sigma} \rightarrow \widetilde{\mathscr{C}_n}$
which sends an element
$(x,k) \in \Cbasis_{\sigma}$ to the element $x$
for elements of ranks $1$ through $n-1$ in $\Cbasis_{n-1}$.
For the two rank $n$ elements,
let
$f(* \cdots *, j_n) = \ho$
and
$f(* \cdots *, j_n + 1) = \hat{1'}$.
Clearly the map $f$ is a bijection from
the elements of $\Cbasis_{\sigma}$ to those of $\widetilde{\mathscr{C}_n}$.
Additionally, $f$ is order-preserving since for
$(y,k) \prec (x,j)$ in $\Cbasis_{\sigma}$, one has
$y \prec x$ in the cubical lattice $\mathscr{C}_n$

To define the inverse map $f^{-1}$,
one follows the described scheme
to determine the second coordinate as
above.  Note that for elements
$x$ and $y$
with $y \prec x$ in $\widetilde{\mathscr{C}_n}$ and
$\rho(y) < n$,
the inverse map
satisfies $f^{-1}(y) = (y,k) \prec f^{-1}(x) = (x,j)$
since $k \leq j$ by construction.  The two maximal elements of
$\widetilde{\mathscr{C}_n}$ are easily seen to be mapped to the two maximal
elements of $\Cbasis_{\sigma}$, so the bijection is order-preserving
as desired.
\end{proof}

\begin{corollary}
For $\sigma \in \Fall_n$,  the order complex $\Delta(\Cbasis_{\sigma})$
is homotopy equivalent
to the suspension of the
$(n-1)$-dimensional sphere $S^{n-1}$.
\end{corollary}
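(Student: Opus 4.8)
The plan is to read the corollary directly off Theorem~\ref{theorem_suspension_barycentric_cube}, which already identifies the order complex $\Delta(\Cbasis_{\sigma})$ with the suspension of the barycentric subdivision of the boundary of the $n$-cube. Since suspension is a homotopy functor, all that remains is to recognize the topological type of the space being suspended and then to transport homotopy types across the suspension.

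First I would record the standard fact that the boundary of the $n$-cube is homeomorphic to the $(n-1)$-sphere $S^{n-1}$: the $n$-cube is an $n$-dimensional convex polytope, so its boundary is a topological $(n-1)$-sphere. Next I would note that barycentric subdivision does not alter the underlying topological space --- it is merely a finer triangulation of the same polyhedron --- so the geometric realization of the barycentric subdivision of the boundary of the $n$-cube is again homeomorphic to $S^{n-1}$. (Equivalently, this realization is precisely the order complex of the proper part $\mathscr{C}_n - \{\hz,\ho\}$ of the cubical face lattice.)

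Finally I would invoke the fact that the suspension functor respects homotopy equivalences: whenever $X$ and $Y$ are homotopy equivalent, so are their suspensions. Applying this with $X$ the barycentric subdivision of the boundary of the $n$-cube and $Y = S^{n-1}$ --- the two being even homeomorphic, hence certainly homotopy equivalent --- yields that $\Delta(\Cbasis_{\sigma})$ is homeomorphic to the suspension of $X$ and therefore homotopy equivalent to the suspension of $S^{n-1}$, as claimed.

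As every step cites a standard fact, there is no genuine obstacle here; the corollary is essentially a restatement of the theorem in topological language. The only point requiring any care is to confirm that one is suspending a space of the correct topological type, which is exactly the observation that the boundary of a convex polytope is a sphere.
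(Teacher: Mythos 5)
Your proposal is correct and follows essentially the same route as the paper: both identify the barycentric subdivision of the boundary of the $n$-cube with $S^{n-1}$ and then use that suspension respects homotopy equivalence, via Theorem~\ref{theorem_suspension_barycentric_cube}. The only cosmetic difference is that the paper re-derives the suspension structure of $\Delta(\widetilde{\mathscr{C}_{n}})$ explicitly (since the theorem's proof only established the poset isomorphism $\Cbasis_{\sigma} \cong \widetilde{\mathscr{C}_{n}}$), whereas you take the theorem's statement at face value, which is legitimate.
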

\begin{proof}
The order complex of
$\mathscr{C}_{n}$ is the barycentric
subdivision of the boundary of the $n$-cube.  The boundary of the
$n$-cube is homotopic to $S^{n-1}$.
The poset $\widetilde{\mathscr{C}_{n}}$ differs 
from $\mathscr{C}_n -\{ \hz, \ho \}$ by the addition
of two maximal elements $\ho$ and $\hat{1'}$.  
Therefore, the order complex of
$\widetilde{\mathscr{C}_{n}}$
is found from $\Delta(\mathscr{C}_n)$ by forming
two $(k+1)$-dimensional faces 
on the vertices
$V(\psi) \cup \{\ho\}$
and
$V(\psi) \cup \{\hat{1'}\}$,
where
$V(\psi)$ are the vertices of
a $k$-face $\psi$ in
$\Delta(\mathscr{C}_n)$.
This is a suspension
over the barycentric subdivision of the boundary of the $n$-cube which
is homotopic to the suspension of $S^{n-1}$.
Thus, by
Theorem~\ref{theorem_suspension_barycentric_cube}
we then have
$\Delta(\Cbasis_{\sigma}) \cong \Delta(\widetilde{\mathscr{C}_{n}})$ and we 
have proven the
corollary.
\end{proof}

The suspension of $S^{n-1}$ is homotopic to $S^n$, and as a result
$\Delta(\Cbasis_{\sigma})$ is a triangulation of the
$n$-sphere.
Let $\rho_{\sigma}$ denote a fundamental cycle of the spherical
complex $\Delta(\Cbasis_{\sigma})$.  
To show that the set 
$\{\rho_{\sigma} : \sigma \in \Fall_{n}\}$ forms a basis for 
$\widetilde{H}_n(\Delta(\Rees({\mathscr C}_n, C_{n+1})))$,
we first place a total order on $\Fall_n$.
Let $\sigma = 0 \sigma_1 \cdots \sigma_n \:n+1$
and
$\tau = 0 \tau_1 \cdots \tau_n \:n+1$
be two permutations from $\Fall_n$.
If the entries $\sigma_1, \ldots, \sigma_{i-1}$ and
$\tau_1, \ldots, \tau_{i-1}$ are unbarred,
$\sigma_i$ is
barred and $\tau_i$ is unbarred, then
we say $\sigma > \tau$.
Otherwise, if $\sigma$ and $\tau$ are barred and unbarred
at exactly the same
places and the permutation $\sigma$ without the bars is
lexicographically greater than
the permutation $\tau$ without its bars,
then we say $\sigma > \tau$.
We then have

\begin{lemma}
\label{lemma_chainorder}
If $m_\tau$ is a maximal chain in $\Cbasis_{\sigma}$
then $\tau \leq \sigma$.
\end{lemma}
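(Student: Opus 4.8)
The plan is to show that any maximal chain $m_\tau$ living inside the subposet $\Cbasis_\sigma$ must have its indexing permutation $\tau$ no larger than $\sigma$ under the total order just defined. Since both $\sigma$ and $\tau$ are falling double augmented barred signed permutations, and since the total order first compares the barring patterns and then breaks ties lexicographically on the unbarred underlying permutations, I would attack the two comparisons separately.

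First I would analyze the barring pattern of any maximal chain in $\Cbasis_\sigma$. The key observation is that the four recursive rules ($i$)--($iv$) defining the second coordinates of elements of $\Cbasis_\sigma$ are governed entirely by the barring pattern of $\sigma$, namely by whether consecutive entries $\sigma_{i-1},\sigma_i,\sigma_{i+1}$ are barred. A step $(y,k)\prec(x,j)$ in a chain of $\Cbasis_\sigma$ either keeps the chain-coordinate $j=k$ (no bar) or advances it $j=k+1$ (bar); so the barring pattern of $\tau$ is read off from exactly where the second coordinate increases along $m_\tau$. I would argue that the construction forces the second-coordinate increments of \emph{any} maximal chain to occur weakly later (in the sense needed) than those dictated by $\sigma$: rule ($i$) and rule ($iii$) both assign $j=k$, while rules ($ii$) and ($iv$) assign $j=k+1$, and the exceptional clauses in ($iii$) and ($iv$) affect only the distinguished chain $m_\sigma$ itself. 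The upshot I want is that if $\tau$ and $\sigma$ first differ in their barring at position $i$, then it must be $\sigma_i$ that is barred and $\tau_i$ that is unbarred, which is precisely the first clause of the definition of $\sigma>\tau$.

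Next, in the case where $\tau$ and $\sigma$ have \emph{identical} barring patterns, I would show the underlying unbarred permutation of $\tau$ is lexicographically at most that of $\sigma$. Here I would use the standardness of the skew-diagram/hook structure: within each hook the entries are forced into decreasing order, and the recursive definition of $\Cbasis_\sigma$ places at each rank the faces $x$ of the $n$-cube compatible with the star pattern already determined, with $m_\sigma$ itself labeled by the lexicographically largest admissible choice of signed values. Concretely I would track, rank by rank, which $0/1/*$-vectors $x$ can appear as first coordinates in a maximal chain of $\Cbasis_\sigma$ and compare the induced values $|\tau_i|$ against $|\sigma_i|$, invoking the decreasing-within-hook condition to conclude $\tau\le\sigma$ lexicographically.

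The main obstacle I anticipate is bookkeeping the interaction between the two coordinates cleanly: the second-coordinate rules ($i$)--($iv$) are stated case-by-case and the exceptional clauses single out $m_\sigma$, so verifying that no maximal chain of $\Cbasis_\sigma$ can ``advance a bar too early'' (and thereby produce $\tau>\sigma$) requires a careful induction on the rank that simultaneously controls the admissible first coordinates $x$ and the forced second coordinate $j$. I expect this rank induction, together with the verification that the exceptional clauses really do pick out the lexicographically maximal chain $m_\sigma$ and no competitor, to be the delicate heart of the argument; the lexicographic tie-breaking step should then follow routinely from the decreasing-order constraint built into the standard skew diagrams.
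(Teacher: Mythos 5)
Your overall decomposition---first compare the barring patterns, then break ties lexicographically on the underlying unbarred permutations---is the same one the paper uses (run contrapositively there, in two cases matching the two clauses of the total order), and your first part is essentially sound: if the first barring discrepancy had $\tau_i$ barred while $\sigma_i$ is unbarred, then $m_{\tau,i}$ would carry second coordinate $r_{\tau,i-1}+2$, whereas rules ($i$) and ($iv$) give every rank-$i$ element of $\Cbasis_{\sigma}$ the second coordinate $r_{\sigma,i-1}+1=r_{\tau,i-1}+1$ when $\sigma_i$ is unbarred; so $m_\tau$ cannot lie in $\Cbasis_{\sigma}$. This is exactly the paper's second case.

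The genuine gap is in your second part, the case of identical barring patterns. You propose to track ``which $0/1/*$-vectors $x$ can appear as first coordinates in a maximal chain of $\Cbasis_{\sigma}$'' and assert that $m_\sigma$ is ``labeled by the lexicographically largest admissible choice.'' But the first coordinates are never restricted: by Theorem~\ref{theorem_suspension_barycentric_cube}, $\Cbasis_{\sigma}\cong\widetilde{\mathscr{C}_{n}}$, so \emph{every} $i$-dimensional face of the cube occurs at rank $i$ of $\Cbasis_{\sigma}$, and $m_\sigma$ enjoys no lexicographic extremality among the maximal chains of $\Cbasis_{\sigma}$. All of the discrimination lives in the second coordinate, and when $\tau$ and $\sigma$ have the same barring pattern the second coordinates of $m_\tau$ match the generic values in $\Cbasis_{\sigma}$ at every rank \emph{except} a rank $i$ ending a run of bars ($\sigma_i$ barred, $\sigma_{i+1}$ unbarred). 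At such a rank the exceptional clause of rule ($iii$) makes $m_{\sigma,i}$ the \emph{unique} element of $\Cbasis_{\sigma}$ whose second coordinate exceeds the generic value by one, while $m_{\tau,i}$ must carry exactly that advanced value; hence $m_\tau$ is forced to pass through $m_{\sigma,i}$ itself. Equality of the cube faces at every bar-run end pins down the unsigned content of each hook, the $0/1$ entries at the non-starred positions pin down the signs of all later entries, and the decreasing-within-hook (falling) condition then forces $\tau=\sigma$, which is stronger than the lexicographic inequality you aim for. This uniqueness statement at the rule-($iii$) ranks is precisely what you set aside as ``the delicate heart'' to be checked later---but it \emph{is} the proof of this case (the paper runs it contrapositively at the first bar-run end), and the mechanism you sketch in its place, restricted faces and lex-maximality of $m_\sigma$, is not available in $\Cbasis_{\sigma}$. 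As written, your second case is an expectation rather than an argument.
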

\begin{proof}
Let $\sigma$ and $\tau$ be permutations in $\Fall_n$
with $\tau > \sigma$. 
We want to show $m_\tau$ is not a chain in $\Cbasis_\sigma$.
There are two cases to consider.

First suppose
$\sigma$ and $\tau$ are barred at precisely the same locations
and that
the unbarred permutation $\tau$ is
lexicographically greater than the unbarred permutation $\sigma$.
Let $i$ be the least index where
$\sigma_i$ is barred
and $\sigma_{i+1}$ is not barred. 
If such an $i$ does not
exist, then each permutation corresponds to a diagram
consisting of one hook
and as such has a labeling $-1 \cdots -n$,
implying $\sigma = \tau$, a contradiction.
So
we may assume such an $i$ satisfying
$1 \leq i < n$ exists.  We see the first $i$ elements
in the chain $m_\tau$ are elements in $\Cbasis_\sigma$.  However, the $i$th 
element
$m_{\tau, i}=(x,r_{\sigma,i}+1)$ where $x$ is the unique rank $i$ element in
$\mathscr{C}_n - \{\hz\}$
given by the unbarred permutation $\tau$ and
$r_{\sigma,i}$ is the number of bars over elements $\sigma_1, \sigma_2, 
\ldots,
\sigma_i$, will not be an element in
$\Cbasis_\sigma$.  (Note, the second coordinate $r_{\sigma,i} + 1$ in 
$m_{\sigma,i}$
is the same as the $i$th second coordinate $r_{\tau,i}+1$ in 
$m_{\tau,i}$ for all
$i$.)  We can see this by observing that the only element in
$\Cbasis_\sigma$ with first coordinate a rank $i$ element in
$\mathscr{C}_n - \{\hz\}$ and
with second coordinate $r_{\sigma,i} +1 = r_{\tau,i}+1$ corresponds to
the unique element given by the unbarred $\sigma$.  Thus, since the
unbarred $\sigma$ is not equal to the unbarred $\tau$, $m_\tau$ is not
a chain in $\Cbasis_\sigma$.

For the second case, suppose that
$\sigma_j$ is barred if and only if  $\tau_j$ is barred
for $j=1, \ldots, i-1$ and $\tau_i$ is barred while $\sigma_i$ is
not barred.
We claim the $i$th element
$m_{\tau, i}$ in $m_\tau$ is not an element of $\Cbasis_\sigma$.  Note the
second coordinate $r_{\tau,j} +1$ in $m_{\tau,j}$ is the same as the second
coordinate $r_{\sigma,j} +1$ in  $m_{\sigma,j}$
where $j = 1, \ldots, i-1$ because
the pattern of bars coincide
for the first $i-1$ terms in the
permutations. 
However, in $\Cbasis_\sigma$ all rank $i$ elements
$\mathscr{C}_n - \{\hz\}$
have second coordinate $r_{i-1}$. 
As there is no bar
over $\sigma_i$, the second coordinate does not increase. 
Since the element $\tau_i$
is barred, the element $(x,r_{\sigma,i-1}+1)$ is
an element in the chain $m_\tau$ but
not in the poset $\Cbasis_\sigma$.
\end{proof}

\begin{theorem}
The set $\{\rho_{\sigma} : \sigma \in \Fall_{n}\}$ forms a basis for
$\widetilde{H}_n(\Delta(\Rees({\mathscr C}_n, C_{n+1})))$ over $\Zzz$.
\end{theorem}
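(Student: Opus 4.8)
The plan is to establish that the fundamental cycles $\{\rho_\sigma : \sigma \in \Fall_n\}$ are linearly independent, which suffices because we already know from the preceding Proposition that $\dim \widetilde{H}_n(\Delta(\Rees({\mathscr C}_n, C_{n+1}))) = n \cdot D_{n-1}^{\pm} = |\Fall_n|$, so any independent set of the right cardinality is automatically a basis. The key structural tool is Lemma~\ref{lemma_chainorder}, which says that the maximal chains appearing in $\Cbasis_\sigma$ (equivalently, the maximal simplices in the support of $\rho_\sigma$) are indexed by falling permutations $\tau$ with $\tau \leq \sigma$ in the total order placed on $\Fall_n$.

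First I would make precise the triangular structure this gives. Each fundamental cycle $\rho_\sigma$ is a $\Zzz$-linear combination of the top-dimensional simplices (maximal chains) of $\Delta(\Cbasis_\sigma)$. By the corollary, $\Delta(\Cbasis_\sigma)$ is a triangulated $n$-sphere, so $\rho_\sigma$ is the sum over its maximal chains with coefficients $\pm 1$, and in particular the coefficient of the simplex $m_\sigma$ itself is nonzero (it is $\pm 1$). Lemma~\ref{lemma_chainorder} guarantees that every maximal chain $m_\tau$ occurring with nonzero coefficient in $\rho_\sigma$ satisfies $\tau \leq \sigma$. Thus, if we order the falling permutations $\sigma^{(1)} < \sigma^{(2)} < \cdots < \sigma^{(N)}$ and expand each $\rho_{\sigma^{(j)}}$ in the basis of oriented top simplices of $\Delta(\Rees({\mathscr C}_n, C_{n+1}))$, the ``leading term'' $m_{\sigma^{(j)}}$ appears in $\rho_{\sigma^{(j)}}$ but in no $\rho_{\sigma^{(\ell)}}$ with $\ell < j$. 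This is exactly the unitriangularity needed for linear independence.

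The argument then concludes as follows. Suppose $\sum_\sigma a_\sigma \rho_\sigma = 0$ for integers $a_\sigma$ not all zero, and let $\sigma^*$ be the maximal index (in the total order on $\Fall_n$) with $a_{\sigma^*} \neq 0$. The coefficient of the simplex $m_{\sigma^*}$ in the sum comes only from $\rho_{\sigma^*}$, since by Lemma~\ref{lemma_chainorder} the chain $m_{\sigma^*}$ cannot appear in any $\rho_\sigma$ with $\sigma < \sigma^*$, and by maximality of $\sigma^*$ no strictly larger $\sigma$ contributes. Hence the coefficient of $m_{\sigma^*}$ equals $\pm a_{\sigma^*} \neq 0$, contradicting that the sum is zero. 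Therefore the $\rho_\sigma$ are linearly independent over $\Zzz$.

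The main obstacle I anticipate is verifying the crucial claim that $m_\sigma$ genuinely appears in $\rho_\sigma$ and that it is the unique maximal (leading) chain of its own cycle, so that the triangular system has nonzero diagonal entries. Lemma~\ref{lemma_chainorder} provides the upper-triangularity ($\tau \leq \sigma$), but one must also confirm the diagonal entry is a unit: since $\Delta(\Cbasis_\sigma)$ is a triangulated $n$-sphere, its fundamental cycle assigns coefficient $\pm 1$ to each of its facets, and $m_\sigma$ is by construction one of these facets, so the coefficient is indeed $\pm 1$. One should also confirm these cycles actually lie in $\widetilde{H}_n(\Delta(\Rees({\mathscr C}_n, C_{n+1})))$, i.e.\ that each $\rho_\sigma$ is a genuine cycle in the ambient order complex; this follows because $\Cbasis_\sigma$ is a subposet of $\Rees({\mathscr C}_n, C_{n+1})$ so $\Delta(\Cbasis_\sigma)$ embeds in $\Delta(\Rees({\mathscr C}_n, C_{n+1}))$, and a fundamental cycle of a subcomplex is automatically a cycle of the larger complex.
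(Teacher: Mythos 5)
Your independence argument is sound and coincides with the paper's: Lemma~\ref{lemma_chainorder} gives the triangularity, and the $\pm 1$ coefficient of $m_{\sigma}$ in $\rho_{\sigma}$ (it is a facet of the triangulated sphere $\Delta(\Cbasis_{\sigma})$) gives the unit diagonal. The genuine gap is in your very first reduction: the claim that a linearly independent set whose cardinality equals the rank is automatically a basis is true over a field but false over $\Zzz$, and the theorem asserts a basis over $\Zzz$. The group $\widetilde{H}_n(\Delta(\Rees({\mathscr C}_n, C_{n+1})),\Zzz)$ is a free abelian group of rank $N = n \cdot D_{n-1}^{\pm}$, and in $\Zzz^N$ the set $\{2e_1, e_2, \ldots, e_N\}$ is linearly independent of cardinality $N$ yet generates a proper subgroup. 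So what you have proved is that the $\rho_{\sigma}$ form a basis of the homology with $\Qqq$-coefficients, equivalently that they generate a finite-index subgroup of the integral homology; you have not shown that they span it.

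The missing step can be carried out with exactly the triangularity you already set up, and it is what the paper does after its independence argument. Given $\rho \in \widetilde{H}_n(\Delta(\Rees({\mathscr C}_n, C_{n+1})),\Zzz)$, your field-coefficient statement lets you write $\rho = \sum_{\sigma \in \Fall_n} c_{\sigma} \rho_{\sigma}$ with $c_{\sigma} \in \Qqq$. Let $\tau$ be the greatest element of $\Fall_n$ in the total order with $c_{\tau} \neq 0$. Restricting to the chain $m_{\tau}$, Lemma~\ref{lemma_chainorder} annihilates every term with $\sigma < \tau$, so $\rho|_{m_{\tau}} = c_{\tau}\, \rho_{\tau}|_{m_{\tau}} = \pm c_{\tau}$. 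Since $\rho$ is an integral chain, this forces $c_{\tau} \in \Zzz$, and then $\rho - c_{\tau}\rho_{\tau}$ is again an integral class supported on strictly smaller indices. Iterating over the finite total order shows every $c_{\sigma}$ lies in $\Zzz$, i.e., the $\rho_{\sigma}$ span the integral homology, which together with your independence argument completes the proof over $\Zzz$.
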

\begin{proof}
To show that
$\{\rho_{\sigma} : \sigma \in \Fall_{n}\}$
are linearly independent,
let
$\sum_{\sigma \in \Fall_{n}} a_{\sigma}\rho_{\sigma} =0$
where
$a_\sigma \in {\bf k}$. 
With respect to  the total order
we have described above, suppose $\tau$
is the greatest element of $\Fall_{n}$ for which $a_{\tau}\not= 0$.
We apply Lemma~\ref{lemma_chainorder}
to derive a contradiction.
We have for a maximal chain $m_r$ in $C_{\sigma}$ 
$$
     0 = \sum_{\sigma \in \Fall_{n}} a_{\sigma} \rho_{\sigma} |_{m_{\tau}}
       = \sum_{\sigma \in \Fall_{n}, \sigma \leq \tau} a_{\sigma}
                                   \rho_{\sigma} |_{m_{\tau}}
       = a_{\tau} \rho_{\tau} |_{m_{\tau}}
       = \pm a_{\tau},\\
$$
since
the fundamental cycle
evaluated at a facet has coefficient $\pm 1$.
However, this gives a contradiction.
Since the rank
of $\widetilde{H}_n(\Delta(\Rees({\mathscr C}_n, C_{n+1})))$ 
is equal to $|\Fall_{n}|$ , we have proven
the
basis result when {\bf k} is a field.

When ${\bf k} = \Zzz$,
linear independence of
$\{\rho_{\sigma} : \sigma \in \Fall_{n}\}$
implies this set is also linearly independent over the rationals
$\Qqq$ and hence that it spans
$\widetilde{H}_n(\Delta(\Rees({\mathscr C}_n, C_{n+1})))$ over $\Qqq$.
Let $\rho \in \widetilde{H}_n(\Delta(\Rees({\mathscr C}_n, C_{n+1})))$.  Then
$\rho = \sum_{\sigma \in \Fall_n} c_{\sigma} \rho_{\sigma}$
for $c_{\sigma} \in \Qqq$.
We will show $c_{\sigma} \in \Zzz$ for all
$\sigma \in \Fall_n$.
Suppose $\tau$ is the greatest element of $\Fall_n$ for which
$c_{\tau} \neq 0$.
Then by Lemma~\ref{lemma_chainorder}
$$
     \rho|_{m_{\tau}}
                      = \sum_{\sigma \in \Fall_n, \sigma \leq \tau}
                            c_{\sigma} \rho_{\sigma}|_{m_{\tau}}
                      = c_{\tau} \rho_{\tau}|_{m_{\tau}} 
                      = \pm c_{\tau}.
$$
Since $\rho \in \widetilde{H}_n(\Delta(\Rees({\mathscr C}_n, C_{n+1})))$,
we have $\rho|_{m_{\tau}} \in \Zzz$.
Thus
$c_{\tau} \in \Zzz$ and
$\rho - c_{\tau} \rho_{\tau} \in \widetilde{H}_n(\Delta(\Rees({\mathscr C}_n, C_{n+1})))$.
Repeat this argument for
$\rho - c_{\tau} \rho_{\tau}$
to conclude
$c_{\nu} \in \Zzz$
and
$\rho - c_{\tau} \rho_{\tau} - c_{\nu} \rho_{\nu} 
\in \widetilde{H}_n(\Delta(\Rees({\mathscr C}_n, C_{n+1})))$
for
$\nu$ the next to the last element in the total order
on $\Fall_n$ for which
$c_{\nu} \neq 0$.
Since there are finitely-many elements in $\Fall_n$,
we may conclude that
$c_{\sigma} \in \Zzz$ for all
$\sigma \in \Fall_n$.
Hence
$\{\rho_{\sigma} :  \sigma \in \Fall_{n}\}$
spans $\widetilde{H}_n(\Delta(\Rees({\mathscr C}_n, C_{n+1})))$ 
over $\Zzz$ and thus is a basis for
$\widetilde{H}_n(\Delta(\Rees({\mathscr C}_n, C_{n+1})))$ over $\Zzz$.
\end{proof}

\section{Representation over $\Sym_n$}
\setcounter{equation}{0}

In this section we develop
a  representation of
$\widetilde{H}_n(\Delta(\Rees({\mathscr C}_n, C_{n+1})))$ over the symmetric group.  
This can be done using a
set of skew Specht modules.

The homology of the order complex of $\Rees({\mathscr C}_n, C_{n+1})$
is an $\Sym_n$-module
in the following manner.  
A signed permutation
$\pi \in \barredsigned{n}$
corresponds to a labeled maximal chain of the poset $\Rees({\mathscr C}_n, C_{n+1})$.
A permutation $\tau \in \Sym_n$
acts on the chains of $\Rees({\mathscr C}_n, C_{n+1})$ by sending the maximal chain labeled with $\pi$
to the maximal chain
whose labels are $\tau \pi$.  Note
that under the action of $\tau$
the placement of the bars is fixed and the signs remain attached
to the same numbers.
This action induces an action
on the faces of $\Delta(\Rees({\mathscr C}_n, C_{n+1}))$ and even further on the homology group itself
whose basis is indexed by a subset of chains in 
$\Rees({\mathscr C}_n, C_{n+1})$.  We have
$\tau \rho_{\pi} = \rho_{\tau \pi}$
for any basis element 
$\rho_{\pi} \in \widetilde{H}_n(\Delta(\Rees({\mathscr C}_n, C_{n+1})))$.

\begin{theorem}
\label{theorem_representation}
There exists an $\Sym_n$-module isomorphism between 
$$
  \widetilde{H}_n(\Delta(\Rees({\mathscr C}_n, C_{n+1})))
     \mbox{  and  }
  \bigoplus 2^{n-|\lambda_1|}S^{\lambda},
$$ 
where the direct sum is over all
partitions $\lambda$ with each $\lambda_i$ shaped into hooks as
described in Section~\ref{section_basis} taken with
multiplicity $2^{n-|\lambda_1|}$.
\end{theorem}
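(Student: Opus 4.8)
The plan is to exploit the explicit fundamental-cycle basis $\{\rho_\sigma : \sigma \in \Fall_n\}$ built in Section~\ref{section_basis} and to match it, one skew shape at a time, with the standard polytabloid bases of skew Specht modules. The first observation is that the action $\tau\cdot\rho_\pi = \rho_{\tau\pi}$ permutes only the underlying values in $\Sym_n$ while leaving the bar pattern fixed; since the union-of-hooks shape $\lambda/\mu$ attached to a falling permutation by the construction of Section~\ref{section_basis} is determined entirely by the bar pattern, the shape is an invariant of the action. Hence, grouping $\Fall_n$ by shape, the $\Zzz$-span $W_{\lambda/\mu}$ of the cycles $\rho_\sigma$ of a fixed shape is an $\Sym_n$-submodule and
$$
   \widetilde{H}_n(\Delta(\Rees({\mathscr C}_n, C_{n+1})))
     = \bigoplus_{\lambda/\mu} W_{\lambda/\mu}.
$$
A falling $\sigma$ of shape $\lambda/\mu$ is the data of an admissible sign pattern (first hook forced negative, the other $n-|\lambda_1|$ entries free) together with a standard filling of the skew diagram, so $\dim W_{\lambda/\mu} = 2^{n-|\lambda_1|}\cdot f^{\lambda/\mu}$, where $f^{\lambda/\mu}$ is the number of standard skew tableaux. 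This already matches $\dim\bigl(2^{n-|\lambda_1|}S^{\lambda/\mu}\bigr)$, reducing the theorem to the module isomorphism $W_{\lambda/\mu}\cong 2^{n-|\lambda_1|}S^{\lambda/\mu}$.

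For the Specht identification I would work one sign-class at a time and exhibit an explicit $\Sym_n$-equivariant map to $S^{\lambda/\mu}$. Both sides carry bases indexed by the same combinatorial set: by the bijections of Section~\ref{section_basis} the falling permutations of shape $\lambda/\mu$ correspond to the standard fillings (decreasing rows and columns) of $\lambda/\mu$, which are exactly the standard Young tableaux indexing the polytabloid basis of $S^{\lambda/\mu}$. I would extend the assignment $\sigma\mapsto\rho_\sigma$ to all fillings by $\rho_t := \tau\,\rho_{\sigma_0}$ whenever $t=\tau\sigma_0$ for a fixed standard reference $\sigma_0$; because the $\Sym_n$-action on homology is a genuine group action this is well defined and automatically satisfies $\tau\cdot\rho_t=\rho_{\tau t}$. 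The candidate map $\Phi\colon S^{\lambda/\mu}\to W_{\lambda/\mu}$, $e_t\mapsto\rho_t$, is then manifestly equivariant, and it is surjective because the standard polytabloids hit the basis $\{\rho_\sigma\}$; an isomorphism onto the relevant copy follows from the dimension count. Summing over the $2^{n-|\lambda_1|}$ admissible sign patterns, which the action permutes among themselves without mixing shapes, yields the asserted multiplicity and completes the decomposition.

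The technical heart of the argument, and the step I expect to be the main obstacle, is proving that $\Phi$ is well defined, i.e.\ that the cycles $\rho_t$ satisfy the defining relations of the skew Specht module: antisymmetry under permuting the entries of a column (the barred vertical leg of a hook) and the Garnir straightening relations. Equivalently, when $\tau t$ is non-standard one must show that the expansion of $\rho_{\tau t}$ in the standard basis $\{\rho_\sigma\}$ has precisely the coefficients produced by straightening the polytabloid $e_{\tau t}$. I would obtain the column-alternating relation by tracking the orientation sign induced on the fundamental cycle of the sphere $\Delta(\Cbasis_\sigma)\cong\Delta(\widetilde{\mathscr{C}_n})$ of Theorem~\ref{theorem_suspension_barycentric_cube} under an odd permutation of the values filling a barred run, and derive the Garnir relations by writing the offending non-standard cycle as a boundary-cancelling combination of the overlapping spherical subcomplexes $\Cbasis_\nu$, in the spirit of Wachs' analysis of the $d$-divisible partition lattice~\cite{Wachs}. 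The triangularity recorded in Lemma~\ref{lemma_chainorder}, namely that $m_\tau$ is a chain of $\Cbasis_\sigma$ only when $\tau\leq\sigma$, is the essential tool: it forces these expansions to be unitriangular with respect to the total order on $\Fall_n$, so that the straightening terminates and can be matched term by term against the Specht side. A secondary delicate point, to be handled alongside the relations, is the sign bookkeeping—verifying that moving each sign along with its value decomposes $W_{\lambda/\mu}$ into exactly $2^{n-|\lambda_1|}$ mutually isomorphic copies of $S^{\lambda/\mu}$ rather than some twisted or induced variant.
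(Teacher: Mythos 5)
Your overall strategy---grouping the fundamental cycles $\rho_\sigma$ by shape and matching each block against a Specht module---is the same as the paper's, but two of your steps are genuinely broken, and they are exactly the steps on which the whole theorem rests.

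First, the dimension count is wrong. By Proposition~\ref{proposition_chains_and_permutations}, a falling permutation has every hook equal to a single strictly decreasing run, so for a fixed shape and a fixed admissible sign pattern the falling permutations are in bijection with ordered choices of hook contents alone: there are $\binom{n}{c_1,\ldots,c_k}$ of them, where $c_i=u_i+b_i$ is the size of the $i$th hook. This is the number of ``standard'' fillings in the sense of Section~\ref{section_bijection} (rows and columns decreasing, one filling per content), and it is \emph{not} $f^{\lambda/\mu}$, the number of standard Young tableaux of the skew diagram, which is what indexes the polytabloid basis of $S^{\lambda/\mu}$: a single hook with arm $u_i\geq 1$ and leg $b_i\geq 2$ has $\binom{c_i-1}{u_i}>1$ standard Young tableaux per content set. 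Concretely, take $n=3$ and the shape consisting of one hook with one unbarred and two barred entries: the only falling permutation is $0\,\,{-1}\,\,\overline{-2}\,\,\overline{-3}\,\,4$, so your $W_{\lambda/\mu}$ is one-dimensional, while $S^{(2,2)/(1)}\cong S^{(2,1)}$ is two-dimensional. Hence $\dim W_{\lambda/\mu}\neq 2^{n-|\lambda_1|}\dim S^{\lambda/\mu}$ in general, and your concluding ``isomorphism follows from the dimension count'' cannot be run. The paper's map $\theta$ is indexed by \emph{its} notion of standard tableaux---the decreasing fillings, one per content---which is the smaller set; your identification of that set with the standard Young tableaux of the skew shape is precisely where your proposal diverges from the paper, and it is false.

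Second, the step you flag as the main obstacle---well-definedness of $\Phi$---is not merely unproven; it conflicts with the invariance that the construction actually provides (and note that even your first step, that $W_{\lambda/\mu}$ is a submodule, already needs this same straightening). The subposet $\Cbasis_\sigma$ depends only on the bar pattern, the hook contents, and the signs attached to the values; this is the content of the paper's second Claim in its final section, and it implies that a permutation $\pi$ permuting values within hooks carries $\Delta(\Cbasis_\sigma)$ to itself, hence can only multiply $\rho_\sigma$ by the degree $\pm1$ of that self-map. If that degree is $+1$, as the paper's Claim asserts ($\rho_{\pi\sigma}=\rho_\sigma$ for $\pi\in S_t$), then your map is forced to vanish: for an odd column permutation $\pi$ one has $e_{\pi t}=-e_t$, while your equivariant definition gives $\Phi(e_{\pi t})=\pi\rho_t=\rho_t$, so $\rho_t=-\rho_t=0$, a contradiction. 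Thus for $\Phi$ to be nonzero you need every odd within-hook permutation to act by $-1$, i.e.\ a genuine orientation computation on the suspension of the barycentric subdivision of the boundary of the cube (a transposition of two cube coordinates is indeed orientation-reversing there)---a computation that appears nowhere in your sketch and that flatly contradicts the paper's Claim, so you cannot import that Claim to help you. The paper's route avoids the Garnir relations entirely by defining $\theta$ on standard basis elements and checking equivariance; your route makes the straightening relations unavoidable, and, combined with the dimension mismatch above, they cannot all be satisfied.
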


To prove this result, we
will need some tools from combinatorial
representation theory.  
For more details and background information,
see~\cite{Sagan}.

Recall that two
tableaux $t_1$ and $t_2$
of shape $\lambda$ are {\em row equivalent},
written $t_1 \sim t_2$,
if the entries in each row of $t_1$ are a permutation
of the entries in the corresponding row of
$t_2$.
A {\em tabloid of shape~$\lambda$}
({\em   $\lambda$-tabloid} or {\em tabloid},
for short)
is then an equivalence class
$\{t\}$.
For a fixed partition
$\lambda$ we denote by $M^{\lambda}$ the $k$-vector space having
$\lambda$-tabloids as a basis.
In the usual way a permutation
$\sigma \in \Sym_n$ acts on a $\lambda$-tableau
by replacing each entry by its image under $\sigma$.
Thus
$\sigma$ acts on a $\lambda$-tabloid
$\{t\}$ by
$\sigma \{t\} = \{\sigma t\}$.
For a tableau
$t$ of shape $\lambda$, the {\em polytabloid} corresponding
to $t$ is
$$
  e_t = \sum_{\sigma \in C_t} {\rm sgn}(\sigma) \{ \sigma t \},
$$
where the sum is over all permutations belonging to
the column stabilizer
$C_t$ of $t$.

The {\em Specht module} $S^{\lambda}$ is the submodule
of $M^{\lambda}$ spanned by
the polytabloids $e_t$, where $t$ has shape~$\lambda$.
The Specht module $S^{\lambda}$ is an $\Sym_n$-module
in the following manner.  
A permutation $\tau \in \Sym_n$
acts linearly on the elements of $S^{\lambda}$ by permuting
the entries
of $t$, that is, $\tau e_t = e_{\tau t}$.

Specht modules were developed to
construct all irreducible representations
of the symmetric group over $\Ccc$.  We will use these
modules to give a representation of 
$\widetilde{H}_n(\Delta(\Rees({\mathscr C}_n, C_{n+1})))$ over
$\Sym_n$.

Recall that a tableau $t$ is said to be {\em standard}
if the entries are increasing in each row and column
of~$t$.
The following theorem is originally due to Young, though
not in this form.  It is also due to Specht.

\begin{theorem}
[Specht, Young]
The set
\begin{equation*}
\{e_t : t \hbox{ is a standard $\lambda$-tableau}\}
\end{equation*}
is a basis for $S^{\lambda}$.
\end{theorem}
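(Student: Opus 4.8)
The plan is to establish the two defining properties of a basis separately: that the standard polytabloids are linearly independent, and that they span $S^{\lambda}$. Both rest on equipping the set of $\lambda$-tabloids with the \emph{dominance order} $\trianglerighteq$, under which $\{t_1\} \trianglerighteq \{t_2\}$ when for every $i$ the entries $\{1,\dots,i\}$ sit in a more top-heavy configuration in $t_1$ than in $t_2$. The whole argument is a triangularity principle: each polytabloid $e_t$ has a distinguished dominance-leading tabloid, for standard $t$ these leading tabloids are distinct, and any non-standard polytabloid can be rewritten in terms of strictly more dominant standard ones.

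For linear independence, the key input is the \emph{dominance lemma}: if $t$ is standard and $\pi \in C_t$ is a nontrivial column permutation, then $\{t\} \rhd \{\pi t\}$. Indeed, a column permutation keeps entries within their columns while reshuffling them across rows; since a standard $t$ already has its column entries increasing downward, any nontrivial $\pi$ must raise a larger entry and lower a smaller one, strictly decreasing dominance. Hence $\{t\}$ is the unique $\trianglerighteq$-maximal tabloid occurring in $e_t = \sum_{\pi \in C_t}\operatorname{sgn}(\pi)\{\pi t\}$, appearing with coefficient $+1$. Given a dependence $\sum_t a_t e_t = 0$ over the standard tableaux, I would pick a tabloid $\{s\}$ that is $\trianglerighteq$-maximal among all tabloids appearing with nonzero coefficient; it can arise only from the single term $e_s$, forcing $a_s = 0$, and iterating collapses the relation. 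This triangular structure yields independence over the ground field $k$ (and indeed integrally).

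For spanning, I would invoke the \emph{Garnir relations}. Fixing two adjacent columns of a tableau $t$ and choosing a subset $A$ of the lower entries of the left column together with a subset $B$ of the upper entries of the right column with $|A|+|B|$ exceeding the relevant column length, the associated Garnir element $g_{A,B}$, a signed sum over coset representatives of $\Sym_A \times \Sym_B$ in $\Sym_{A\cup B}$, annihilates $e_t$. Expanding $e_t\,g_{A,B}=0$ then expresses $e_t$ as a $\pm$-combination of polytabloids $e_{t'}$ whose $A\cup B$-entries have been resorted. The straightening algorithm proceeds in two stages: first sort each column increasingly (at the cost of a sign) to reach column-standard tableaux, then repeatedly apply a Garnir relation at the first row-descent to replace a non-standard $e_t$ by polytabloids indexed by tableaux that are strictly larger in dominance order.

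The step I expect to be the main obstacle is verifying that this straightening terminates and genuinely lands in the span of standard polytabloids. The crux is checking that each Garnir substitution trades $e_t$ for polytabloids indexed by \emph{strictly more dominant} tableaux; granting this, termination is automatic, since the dominance order on the finite set of $\lambda$-tabloids is well-founded, so the recursion cannot cycle and must bottom out at standard tableaux. Constructing the Garnir element correctly, namely choosing $A$ and $B$, confirming $e_t\,g_{A,B}=0$, and reading off the dominance increase, is the delicate bookkeeping on which the spanning argument hinges. Once both halves are in hand, the standard polytabloids form a linearly independent spanning set, hence a basis, and as a byproduct the number of standard $\lambda$-tableaux computes $\dim S^{\lambda}$.
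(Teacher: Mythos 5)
The paper does not actually prove this statement: it is quoted as a classical theorem of Young and Specht, with the proof deferred to the cited reference (Sagan's book on the symmetric group). Your proposal is a correct reconstruction of exactly that standard proof: the dominance-order triangularity argument (each $e_t$ for column-strict $t$ has $\{t\}$ as its unique $\trianglerighteq$-maximal tabloid, with coefficient $+1$, so distinct standard polytabloids cannot satisfy a nontrivial linear relation) gives independence, and the Garnir relations together with the straightening algorithm, with termination guaranteed because each Garnir substitution replaces $e_t$ by polytabloids indexed by strictly more dominant tableaux, give spanning. Two cosmetic points if you write this up in full: the Garnir element is usually applied on the left, $g_{A,B}\, e_t = 0$, rather than as a right action $e_t\, g_{A,B}$ (this is purely a convention of whether $\Sym_n$ acts on values or positions, but you should fix one and stay consistent); and the dominance lemma needs only that the columns of $t$ increase downward (column-strictness), which is what makes the two-stage straightening, first sorting columns, then inducting on dominance, go through. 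With those details pinned down, your argument is complete and is the same one the paper's reference gives.
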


With these definitions in
mind, we can begin the proof of Theorem~\ref{theorem_representation}.  
Let $\h = \lambda - \mu$ be a skew diagram
consisting of the union of $k$ hooks, as described in Section 4, where
the $i$th hook has size
$|\h_i|$.  We consider the case where
$\h=\h_1 \cdots \h_k$ is fixed.

Define a new set
$\Fall_{n}^{-} =\{-\sigma : \sigma= \sigma_1 \cdots \sigma_n \in \Fall_n\}$
where $-\sigma= -\sigma_1
-\sigma_2 \cdots -\sigma_n$.  It is easily noted there exists a
bijection between $\Fall_{n}^{-}$ and $\Fall_{n}$.  We use this bijection to
move between basis elements of 
$\widetilde{H}_n(\Delta(\Rees({\mathscr C}_n, C_{n+1})))$ which correspond to
decreasing
labeled skew shapes
and standard tableaux which have increasing labels.

Consider the usual unsigned Specht module $S^{\lambda}$
in the case $\lambda$ is composed of hooks of size at least two
and is augmented at the end with a block containing
the element $n+1$.  
It
is generated by polytabloids which are
indexed by
standard labelings of $\lambda$.  Define an $\Sym_n$-module homomorphism
\begin{equation*}
   \theta : S^{\lambda} \rightarrow \widetilde{H}_n(\Delta(\Rees({\mathscr C}_n, C_{n+1})))
\end{equation*}
where $e_{t} \mapsto \rho_{-\sigma}$
for  $t$
a standard $\lambda$-tableau and $\sigma
\in \Fall_{n}^{-}$ is found by writing the labels on $t$ from left to
right and by placing bars over all elements which
occur in  the rightmost
columns of a hook.

We wish to extend this map over ``signings'' of $S^{\lambda}$.  Given a
standard polytabloid $e_t \in S^{\lambda}$ where~$t$ is a standard
tableau, we sign the elements occurring in the
last $k-1$ hooks of $t$,
that is,
sign the labels on  
$\lambda_2,  \ldots, \lambda_k$.  This can
be written as a subset $A \subset [n]- \{\lambda_1\}$ where $A$
corresponds to the elements in $t$ labeled with a negative sign.  For
each $e_t$ there are
$\sum_{j=0}^{n-|\lambda_1|}{{n-|\lambda_1|}\choose j}$ such signings,
or equivalently, such subsets $A$.
We let $e_t^A$ denote the signing by $A$ of the polytabloid $e_t$.  
Using the binomial theorem, there is an
isomorphism
\begin{equation*}
{\bigoplus_{j=0}^{n-|\lambda_1|}}{{n-|\lambda_1|}\choose
    j} S^{\lambda} \cong
2^{n-|\lambda_1|}S^{\lambda}.
\end{equation*}
This is an $\mathbb{C}\Sym_n$-module
with action $\pi e_{t}^{A} = e_{\pi t}^{A}$ where $\pi \in \Sym_n$
permutes the labels of the tableau $t$.


To show each Specht module $S^{\lambda}$ occurs with multiplicity
$2^{n-|\lambda_1|}$ in the top homology group, we extend
the map $\theta$ to
$\theta:{\sum_{j=0}^{n-|\lambda_1|}}{{n-|\lambda_1|}\choose
    j} S^{\lambda} \rightarrow \widetilde{H}_n(\Delta(\Rees({\mathscr C}_n, C_{n+1})))$ where basis elements are
mapped by $e_{t}^{A}
\mapsto \rho_{-\sigma}$.  
The permutation $\sigma$ is found by attaching
negative signs to the labels in $t$ which are also in $A$.  Then the
labels in each hook
written in increasing order.  As before,
we form the permutation $\sigma$ by writing down the labels
reading from left to right
with bars placed over the rightmost element in every
row.  Note that when
$A$ is empty, we are back in the usual unsigned case.

Set $E^{\lambda} = \{e_{t}^A\}$ where $t$ ranges over all standard Young
tableaux of shape $\lambda$ and $A$ ranges over all subsets of 
$[n]- \{\lambda_1\}$.

\begin{proposition}\label{thetabijsets}
The map
\begin{equation*}
\theta: E^{\lambda} \longrightarrow \{\rho_{\sigma} | \sigma \in \Fall_n
  \hbox{ and } sh(\sigma)=\lambda\}
\end{equation*}
is a bijection.
\end{proposition}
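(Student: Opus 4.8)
The plan is to establish that $\theta$ is a bijection between $E^\lambda$ and the set of fundamental cycles $\{\rho_\sigma : \sigma \in \Fall_n, \operatorname{sh}(\sigma) = \lambda\}$ by exhibiting an explicit two-sided inverse, constructed by reversing the two-stage recipe that defines $\theta$. Recall that $\theta$ sends a signed standard polytabloid $e_t^A$ to $\rho_{-\sigma}$, where $\sigma$ is obtained by (i) attaching negative signs to the entries of $t$ recorded by $A$, (ii) writing the entries of each hook in increasing order, and (iii) reading left to right with a bar over the rightmost element of every row. Since the target cycles are indexed by $\Fall_n$ with shape $\lambda$, and the passage $\sigma \leftrightarrow -\sigma$ is the trivial involution from $\Fall_n^-$ to $\Fall_n$ already fixed in the text, it suffices to show the underlying map from $E^\lambda$ to the falling permutations of shape $\lambda$ is a bijection.

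First I would show injectivity by arguing that the data $(t, A)$ can be recovered from $\sigma$. Given a falling barred signed permutation $\sigma$ of shape $\lambda$, the bar pattern reproduces the hook decomposition of $\lambda$ exactly as in Section~\ref{section_bijection}: each maximal run of unbarred entries followed by barred entries is a hook, so the shape $\lambda$ together with the boundary between the horizontal and vertical arms of each hook is determined. The sign data gives the subset $A$ directly (the negatively signed entries among the last $k-1$ hooks), while the underlying unsigned values, placed into the hook cells in the unique order-increasing manner dictated by the standard condition, reconstruct the tableau $t$. Because both ``increasing within each hook'' (for $t$) and ``decreasing within each hook'' (for $\sigma$) are unique orderings of a fixed set of entries, each step is reversible with no ambiguity.

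For surjectivity I would take an arbitrary $\sigma \in \Fall_n$ with $\operatorname{sh}(\sigma) = \lambda$ and run this reconstruction to produce a candidate pair $(t, A)$, then verify that $t$ is genuinely a \emph{standard} $\lambda$-tableau — rows and columns increasing — which follows because the skew-diagram encoding developed in Section~\ref{section_bijection} already guarantees the rows and columns are monotone once the entries are placed by the hook-filling rule. Finally I would confirm that $\theta(e_t^A) = \rho_\sigma$ by composing the two reconstructions, which cancel by construction. \textbf{The main obstacle} I anticipate is purely bookkeeping: carefully matching the ``increasing order within hooks'' convention used for standard tableaux against the ``decreasing order'' convention used for falling permutations, and confirming that the sign-assignment on the last $k-1$ hooks is consistent with the constraint (already noted in the text) that the first hook is forced to be entirely negative. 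Once these conventions are pinned down, the bijection is the formal inverse of the correspondence already built in Section~\ref{section_bijection}, so the proof reduces to verifying that the forward and backward passes are mutually inverse.
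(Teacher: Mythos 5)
Your proposal is correct and follows essentially the same route as the paper: the paper also proves the proposition by constructing the explicit inverse map $\theta'$ (label $\lambda$ left to right with the entries of $-\sigma$, sort each hook into increasing absolute value, read off $A$ from the negative entries and $t$ from the absolute values) and then checking that $\theta\circ\theta'$ and $\theta'\circ\theta$ are both the identity. Your added care about verifying standardness of $t$ and the sign convention on the first hook is just the bookkeeping the paper compresses into ``one can check.''
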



\begin{proof}
Let $\theta'$ be a map from
$\{\rho_{\sigma} : \sigma \in \Fall_n \mbox{ and }
                              sh(\sigma)=\lambda\}$
to $E^{\lambda}$.  Given $\sigma \in \Fall_n$ with shape
$\lambda$, we will define $\theta'(\rho_{\sigma})=e_{t}^{A}$ such that
$\theta(e_{t}^{A})=\rho_{\sigma}$.

Set
$\theta'(\rho_{\sigma})=e_{t}^{A}$ by labeling $\lambda$ from left to right
with the elements of $-\sigma$.  Then
in each hook, rearrange the labels so the absolute value of these
labels is increasing.  Call this labeling $t'$.  
The subset $A$ is determined by
the negatively-labeled elements in $t'$ and $t$ is given by the
absolute value of $t'$.

One can check $\theta(\theta'(\rho_{\sigma}))=\rho_{\sigma}$ and
$\theta'(\theta(e_{t}^{A} ))=e_{t}^{A}$.
\end{proof}

Proposition~\ref{thetabijsets} can be extended by linearity to a
vector space isomorphism between the two spaces.

We sum over all possible partitions and signings of $\lambda$ to get a
bijection between basis elements of
${\sum_{j=0}^{n-|\lambda_1|}}{{n-|\lambda_1|}\choose     j} S^{\lambda}$ 
and the basis elements of 
$\widetilde{H}_n(\Delta(\Rees({\mathscr C}_n, C_{n+1})))$ to
conclude the following corollary.

\begin{corollary}
The map
\begin{equation*}
\theta:\{E^{\lambda}\}_{\lambda} \longrightarrow \{\rho_{\sigma}
|\sigma \in \Fall_n\}
\end{equation*}
is a bijection where $\lambda$ ranges over all skew diagrams which
are finite unions of hooks of size at least two augmented at the end
by a block containing the element $n+1$.
\end{corollary}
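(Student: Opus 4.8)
The plan is to assemble the shape-by-shape bijections of Proposition~\ref{thetabijsets} into a single global bijection, using the fact that the ``shape'' statistic indexes the domain and the codomain by the very same set.

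First I would observe that the skew-diagram construction of Section~\ref{section_bijection} assigns to each falling permutation $\sigma \in \Fall_n$ a well-defined shape $sh(\sigma)$, namely a finite union of hooks of size at least two augmented at the end by the block containing $n+1$. Writing $\Fall_n = \bigsqcup_{\lambda} \{\sigma \in \Fall_n : sh(\sigma) = \lambda\}$, where $\lambda$ runs over these admissible shapes, the codomain decomposes accordingly as the disjoint union $\{\rho_\sigma : \sigma \in \Fall_n\} = \bigsqcup_\lambda \{\rho_\sigma : \sigma \in \Fall_n,\ sh(\sigma) = \lambda\}$. On the domain side, the set $\{E^\lambda\}_\lambda$ is by definition the disjoint union $\bigsqcup_\lambda E^\lambda$ over the same admissible shapes, since a standard tableau $t$ together with a signing subset $A \subseteq [n] - \{\lambda_1\}$ records the shape it lives on, so that $E^\lambda$ and $E^{\lambda'}$ are disjoint for $\lambda \neq \lambda'$. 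I would also note that no block on either side is spurious: every admissible $\lambda$ occurs as the shape of some $\sigma \in \Fall_n$ (equivalently, each $E^\lambda$ is nonempty), so the two indexing sets coincide exactly.

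Next I would invoke Proposition~\ref{thetabijsets}, which gives for each fixed $\lambda$ a bijection $\theta \colon E^\lambda \to \{\rho_\sigma : \sigma \in \Fall_n,\ sh(\sigma) = \lambda\}$. Because $\theta$ sends $e_t^A$ to $\rho_{-\sigma}$ with $sh(\sigma) = \lambda$, it is shape-preserving and hence carries each block of the domain decomposition onto the corresponding block of the codomain decomposition. A map between two sets partitioned along a common index set, restricting to a bijection on each matched pair of blocks, is a bijection overall; this is exactly the situation here, so $\theta \colon \{E^\lambda\}_\lambda \to \{\rho_\sigma : \sigma \in \Fall_n\}$ is a bijection. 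The one point genuinely needing care, and thus the main obstacle, is the bookkeeping that verifies the shape map is both well-defined on $\Fall_n$ and surjective onto the admissible shapes, so that the fibrewise bijections of Proposition~\ref{thetabijsets} tile the full domain and codomain without gaps or overlaps; the remainder is the formal gluing described above.
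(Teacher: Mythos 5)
Your proposal is correct and follows essentially the same route as the paper: the paper also obtains the corollary by summing the shape-by-shape bijections of Proposition~\ref{thetabijsets} over all admissible skew diagrams $\lambda$, matching the shape decomposition of $\Fall_n$ with the decomposition of the domain into the sets $E^{\lambda}$. Your additional bookkeeping (well-definedness of $sh(\sigma)$, disjointness of the blocks, and nonemptiness of each $E^{\lambda}$) only makes explicit what the paper leaves implicit.
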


Again we extend by linearity to a vector space isomorphism between these two
spaces.    It is left to prove the module isomorphism properties in order to
prove Theorem~\ref{theorem_representation}.
First, we look at which elements of $\Sym_n$ fix
basis elements of $\sum_{j=0}^{n-|\lambda_1|}{{n-|\lambda_1|}\choose
    j} S^{\lambda}$ and 
$\widetilde{H}_n(\Delta(\Rees({\mathscr C}_n, C_{n+1})))$.

For a given tableau $t$, define
$S_t = S_{\lambda_1} \times \cdots \times S_{\lambda_k}$
where the $\lambda_i$ are subsets
of $[n]$ corresponding to the labels of the $i$th hook $t$.

\begin{claim}
The polytabloid
$e_t^A$ satisfies
     $e_{t}^{A}= e_{\pi t}^{A}$ for all $\pi \in S_t$.
\end{claim}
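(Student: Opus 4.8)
The plan is to reformulate the claim as a stabilizer statement: since the action is $\pi\, e_t^A = e_{\pi t}^A$, what must be shown is that the subgroup $S_t$ \emph{stabilizes} $e_t^A$. As a stabilizer is a subgroup of $\Sym_n$, and as the factors of $S_t = S_{\lambda_1}\times\cdots\times S_{\lambda_k}$ commute (they act on disjoint label sets, one per hook), it suffices to fix a single hook, say the $i$th, and prove invariance under a generating set of $S_{\lambda_i}$. The horizontal arm of a hook is a single row and its vertical (rightmost, barred) arm is a single column, and the two meet only in the corner box; hence the transpositions of two entries in a common row, together with the transpositions of two entries in a common column, already generate $S_{\lambda_i}$. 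Because ``$\pi$ fixes $e_t^A$'' is preserved under products, I only have to check these transpositions.

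For a transposition $\pi$ of two entries of the vertical arm (two barred entries in the same column), $\pi$ lies in the column stabilizer $C_t$, so the defining antisymmetry of the polytabloid gives $e_{\pi t}=\pi e_t=\operatorname{sgn}(\pi)\,e_t=-e_t$. The resolution is that the signing in $e_t^A$ is placed exactly on the barred (rightmost-column) entries, so exchanging two of them and restoring the forced decreasing order inside the hook flips the sign carried by $A$ by the same factor $-1$; the two signs cancel and $e_{\pi t}^A=e_t^A$. For a transposition $\pi$ of two \emph{unbarred} entries of the horizontal arm, both lie in singleton columns, so $C_t$ is unchanged and, since a tabloid records only the set of entries in each row, $\{\pi\sigma t\}=\{\sigma t\}$ for every $\sigma\in C_t$; thus $e_{\pi t}=e_t$ term by term and the $A$-signing is untouched, giving $e_{\pi t}^A=e_t^A$ immediately.

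The one remaining generator is the transposition exchanging the barred corner with an unbarred horizontal entry, which crosses between the two blocks of the hook; here neither the naive tabloid argument nor the pure column argument applies directly, and one must combine the sign produced by the column antisymmetry with the sign produced by the $A$-signing when the hook is returned to its standard decreasing order. This sign-matching is the main obstacle, and it is precisely the point at which the definition of the signing $e_t^A$ earns its keep: the bare polytabloid $e_t$ is \emph{not} $S_t$-invariant (a column or corner transposition genuinely changes it), whereas the signed object is invariant because the sign convention on the barred entries is set up to absorb exactly these discrepancies. Once this cancellation is verified, invariance under all of $S_{\lambda_i}$, and hence under $S_t$, follows, completing the proof.
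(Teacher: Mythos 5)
Your reduction to transposition generators inside a single hook is fine, and your treatment of the row case (two unbarred entries sitting in singleton columns) is essentially the paper's argument, done a bit more carefully. But the heart of your proposal --- the claimed cancellation between the column-stabilizer sign and a sign supposedly produced by the signing $A$ --- is not available under the paper's definitions, and this is a genuine gap. In the paper, $e_t^A$ is simply the polytabloid $e_t$ regarded as an element of the copy of $S^{\lambda}$ indexed by the subset $A \subseteq [n]-\{\lambda_1\}$, and the module action is \emph{defined} by $\pi e_t^A = e_{\pi t}^A$ with the same $A$; the signing attaches minus signs to labels, it never multiplies the vector by a scalar. Moreover, $A$ is not ``placed exactly on the barred entries'': the bars are determined by the shape (they mark the vertical arms of the hooks), while $A$ ranges over \emph{all} subsets of the labels outside the first hook --- that is precisely where the multiplicity $2^{n-|\lambda_1|}$ in Theorem~\ref{theorem_representation} comes from, so for a fixed $t$ the same argument must work for every $A$, including $A=\emptyset$. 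Consequently, when you transpose two entries of the vertical arm, nothing in $e_t^A$ compensates the factor $\operatorname{sgn}(\pi)$; combined with your own (correct) observation that $e_{\pi t}=-e_t$ for such a transposition, your argument as written yields $e_{\pi t}^A=-e_t^A$, i.e.\ it refutes the claim rather than proves it. On top of this, the corner-crossing transposition, which you yourself identify as ``the main obstacle,'' is left entirely unverified, so the proposal is incomplete even if the column case were repaired.

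For comparison, the paper's proof never invokes $A$ at all --- correctly so, since the action keeps $A$ fixed, and the claim is therefore equivalent to the unsigned statement $e_t=e_{\pi t}$. The paper argues that row permutations fix each tabloid, that column permutations act as elements of the column stabilizer $C_t$, and that the mixed (corner) case can be handled by rewriting tabloid representatives so that $\pi$ again acts as a column-stabilizer element. Your sign-consciousness does expose a real soft spot: for $\pi \in C_t$ the polytabloid identity is $e_{\pi t}=\operatorname{sgn}(\pi)\,e_t$, not $e_{\pi t}=e_t$ as the paper asserts, so the displayed equality really only holds up to sign --- a discrepancy the paper glosses over (and which is less visible on the homology side, where the fundamental cycles $\rho_{\sigma}$ are themselves only defined up to orientation). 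But the repair you propose, routing that sign through $A$, cannot work; any honest fix has to be made within the unsigned Specht module itself.
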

\begin{proof}
Given such a permutation $\pi \in S_t$,
it acts on $t$ by permuting labels
only within individual hooks of $t$.  If the labels within a row are
permuted, the polytabloid is fixed because each tabloid is a row
equivalence class.  If a labels within a column are permuted, $\pi$
acts as an element of the column stabilizer
$C_t$.  For such an element $\pi$
we have $e_t=e_{\pi t}$.
Lastly, if an element in a column is moved out of its column but
within its 
row because of the equivalence class, we can rewrite the tabloid with
that element occurring at the end of the row, 
leaving $\pi$ to act as an element
of $C_t$.
\end{proof}

\begin{claim}
The fundamental cycle $\rho_{\sigma}$ satisfies
$\rho_{\sigma} =\rho_{\pi \sigma}$ for all $\pi \in S_t$.
\end{claim}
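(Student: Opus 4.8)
The plan is to mirror the preceding (Specht-module) claim geometrically. Recall that $\tau\in\Sym_n$ acts on $\Rees({\mathscr C}_n,C_{n+1})$ by permuting the coordinate directions of the cube, $(x,i)\mapsto(\tau x,i)$, and that under the convention $\rho_{\tau\sigma}=\tau\rho_\sigma$ the assertion $\rho_\sigma=\rho_{\pi\sigma}$ is exactly the statement that each $\pi\in S_t$ fixes the class $[\rho_\sigma]$ in $\widetilde H_n(\Delta(\Rees({\mathscr C}_n,C_{n+1})))$. The natural first move is to try to show that $\pi$ restricts to an automorphism of the subposet $\Cbasis_{\sigma}$, for then it would carry the fundamental cycle of the sphere $\Delta(\Cbasis_{\sigma})$ to $\pm$ itself.

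The key computation supporting this is an inspection of the rules (i)--(iv). For a \emph{generic} rank-$i$ element the second coordinate is a function of the rank $i$ together with the bar pattern of $\sigma$ alone; since $\pi$ preserves the dimension of a face and keeps the bars in their positions, it sends generic elements to generic elements with the same second coordinate, and it fixes the two top elements $(*\cdots*,k)$ and $(*\cdots*,k+1)$. The only elements with an \emph{exceptional} second coordinate are the chain elements $m_{\sigma,i}$ occurring at a block boundary (rule (iii): $\sigma_i$ barred, $\sigma_{i+1}$ unbarred), and each such $m_{\sigma,i}$ has its set of starred coordinates equal to $\{|\sigma_1|,\dots,|\sigma_i|\}$, a union of \emph{complete} blocks of $\sigma$; hence this star-set is fixed setwise by $\pi\in S_t$.

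Here, however, lies the real obstacle: although the star-set of $m_{\sigma,i}$ is preserved, a within-block permutation generally alters the $0/1$-filling of $m_{\sigma,i}$ on the not-yet-starred later blocks, so $\pi$ sends $m_{\sigma,i}$ to a \emph{different} face of the same star-set, one carrying the non-exceptional coordinate. Thus $\pi(\Cbasis_{\sigma})\neq\Cbasis_{\sigma}$ in general and $\pi$ is not literally an automorphism. What survives is that $\Cbasis_{\sigma}$ and $\pi(\Cbasis_{\sigma})$ are triangulated $n$-spheres (by Theorem~\ref{theorem_suspension_barycentric_cube}) that agree everywhere except for the choice of exceptional representative inside a fixed star-set at each block boundary. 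I would finish by showing that replacing one exceptional representative by another within the same star-set changes the fundamental cycle only by a boundary in $\Delta(\Rees({\mathscr C}_n,C_{n+1}))$, an explicit elementary flip localized at that block boundary, so that $[\pi\rho_\sigma]=\pm[\rho_\sigma]$.

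I expect the genuine difficulty to be concentrated in this last step, and to be twofold: first, exhibiting the flip as a boundary; and second, the orientation bookkeeping. A coordinate permutation acts on the boundary sphere of the cube with degree $\mathrm{sgn}(\pi)$, so the automorphic part contributes a factor $\mathrm{sgn}(\pi)$, and the point is that the accumulated flip signs at the block boundaries must cancel it exactly, leaving $[\pi\rho_\sigma]=[\rho_\sigma]$ with no residual sign. This precisely parallels the sign that a column permutation contributes to the polytabloid $e_t^{A}$ in the preceding claim, and it is the matching of these signs that makes the map $\theta$ a genuine $\Sym_n$-isomorphism.
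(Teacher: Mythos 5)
Your proposal goes wrong at the point you yourself flag as the "real obstacle," and the obstacle is an artifact of misreading the group action. The paper does not let $\pi$ act geometrically by permuting coordinate directions via $(x,i)\mapsto(\pi x,i)$; it acts on labeled maximal chains by replacing the label word $\sigma$ with $\pi\sigma$, where \emph{the bars stay in the same positions and the signs stay attached to the same numbers}. Under that action the difficulty you describe does not exist: the $0/1$-entries of the exceptional chain element $m_{\sigma,i}$ at its non-starred positions are recorded precisely by the signs carried by the numbers labeling those positions, and $\pi\in S_t$ changes neither those signs nor (as you correctly note) the starred set, which is a union of complete blocks. Hence $m_{\pi\sigma,i}=m_{\sigma,i}$ as elements of $\Rees({\mathscr C}_n,C_{n+1})$. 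Combined with your (correct) observation that the generic elements of each rank are determined by the bar pattern alone, this gives $\Cbasis_{\pi\sigma}=\Cbasis_{\sigma}$ as subposets, hence $\rho_{\pi\sigma}=\rho_{\sigma}$ outright. That equality of subposets is the entire content of the paper's proof; no flip and no sign cancellation are needed.

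Moreover, even granting your geometric reading, the repair you sketch cannot be carried out, so the gap is genuine rather than a deferred technicality. The complex $\Delta(\Rees({\mathscr C}_n,C_{n+1}))$ is $n$-dimensional, so its group of $n$-boundaries is zero and $\widetilde{H}_n=\ker\partial_n$: two top-dimensional cycles are homologous if and only if they are equal as chains. A fundamental cycle of a triangulated $n$-sphere is supported on \emph{all} of its facets with coefficients $\pm 1$, so if $\pi$ genuinely moved an exceptional face $m_{\sigma,i}$ to a different face, then $\pi\rho_{\sigma}$ and $\rho_{\sigma}$ would have different supports and could never agree in $\widetilde{H}_n$---there is no ``elementary flip that is a boundary'' available in the top dimension. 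In other words, under the coordinate-permutation action the claim would simply be false (a useful internal check that the intended action must be the label action with signs fixed to numbers), and the $\mathrm{sgn}(\pi)$ bookkeeping you anticipate never arises, because $\rho_{\pi\sigma}$ is by definition the fundamental cycle of $\Delta(\Cbasis_{\pi\sigma})$, a complex that coincides with $\Delta(\Cbasis_{\sigma})$.
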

\begin{proof}
It is enough to show 
the posets $\Cbasis_\sigma$ and
$\Cbasis_{\pi   \sigma}$ 
are isomorphic
to prove the equality of the fundamental cycles of their
order complexes $\rho_\sigma$ and $\rho_{\pi \sigma}$.  
The elements of the posets $\Cbasis_\sigma$
and $\Cbasis_{\pi \sigma}$ have bars in the same places and negative signs
with the same numbers, so it is left to consider the ranks in the poset
where one element of rank $i$ for some $i$ has a different second
coordinate from all other elements of that rank.  
If the set of
ranks with this property is the
same in $\Cbasis_\sigma$ and $\Cbasis_{\pi \sigma}$, the two posets are 
isomorphic.
In $\sigma$ or $\pi \sigma$ an element having
rank $i$ must correspond to a label
at the end of a
piece $j$ for some $j$.  In $\Cbasis_\sigma$, this element will
have stars in positions corresponding to labels in the first $j$
places in the permutation.  This is the same in $\Cbasis_{\pi \sigma}$
because $\pi$ only permutes elements within individual pieces.  The
fixed negative signs assure the non-starred elements are the same in
both.  Thus, we have $\Cbasis_\sigma=\Cbasis_{\pi \sigma}$.
\end{proof}

We now prove Theorem~\ref{theorem_representation}.
For $\pi \in S_t$, we have
$\pi \theta (e_{t}^{A})=\theta(\pi e_{t}^{A})$.  That is,
\begin{eqnarray*}
\pi \theta(e_{t}^{A})=\pi \rho_{\sigma}
=\rho_{\pi \sigma} =\rho_{\sigma} = \theta(e_{t}^{A})= \theta (e_{\pi
  t}^{A}) = \theta(\pi e_{t}^{A})
\end{eqnarray*}
It is left to show this relationship holds for $\tau \in S_n -
S_t$.  In fact, it is enough to show $\theta(\tau e_{t}^{A})=\pi \tau
\rho_{\sigma}$
for some $\pi \in S_t$.

Consider $\theta( \tau e_{t}^{A})$ and $\tau \rho_{-\sigma}$ for some
$\tau \in S_n -
S_t$ and some $e_{t}^{A}$ such that
$\theta(e_{t}^{A})=\rho_{\sigma}$.  
The permutation~$\tau$ acts on $t$ by permuting
the labels.  
The polytabloid
$e_{\tau t}^{A}$ is a sum of tabloids under action by
the column stabilizer $C_t$.  Hence, we are only concerned with
cycles of $\tau$ which move labels from one hook of $t$ to another
hook of $t$.  Let $\theta$ take $e_{\tau t}^{A}$ onto
$\rho_{-\widehat{\sigma}}$.
We know $\widehat{\sigma}$ is found by
attaching the signs from $A$ to $t$ and
reordering so each piece is decreasing, and $\tau$ acts
on $\sigma$ also by permuting the labels.  There is no guarantee that
$\tau \sigma$ will have hooks
each of which having  labels in decreasing order.
However, we can find a
permutation
$\pi \in S_t$ such that $\pi \tau \sigma$ will
have hooks whose labels are in decreasing order.  Since there is only
one way to write a set of integers in decreasing order, it is left to
show the labels on each hook of $\widehat{\sigma}$ are the same as the
labels on the corresponding hook of $\tau \sigma$.  (Hooks of
$\sigma \in \barredsigned{n}$ correspond to the hooks in the
$\lambda$ associated with $\sigma$.)  If label $l$ is in a different
hook in $\widehat{\sigma}$ than
in $\tau \sigma$, then  $\tau t$ mapped $l$ to a different hook than
  $\tau \sigma$.  This is a contradiction because labels in $t$ are in
  the same corresponding hooks as labels in $\sigma$.  Hence,
  $\widehat{\sigma} = \pi \tau \sigma = \tau \sigma$ and $\theta(\tau
 e_{t}^{A}) = \rho_{-\tau \sigma}$.

The isomorphism
${\bigoplus_{j=0}^{n-|\lambda_1|}}{{n-|\lambda_1|}\choose
    j} S^{\lambda} \cong
2^{n-|\lambda_1|}S^{\lambda}$ induces the desired  module isomorphism
$2^{n-|\lambda|}  S^{\lambda} \cong \widetilde{H}_n(\Delta(\Rees({\mathscr C}_n, C_{n+1})))$.  Thus, we have
    proved Theorem~$\ref{theorem_representation}$.

\section{Concluding remarks}
\setcounter{equation}{0}

What poset $P$ would have its M\"obius function related to
the permanent
of a matric having $s$'s occur on the diagonal
and $r$'s in the off-diagonal entries?
The case when $s = r-1$ is the
Rees product of the
$r$-cubical lattice with the chain,
the case 
$(r,s) = (1,0)$ corresponds to the Rees product of the Boolean algebra
with the chain, and 
$(r,s) = (2,1)$ to the Rees product of the 
cubical lattice
with the chain.

The derangement numbers occur as the local $h$-vector
of the barycentric subdivision of the
$n$-simplex~\cite{Stanley_local_h}.
Is there a relation between the local $h$-vector
and the Rees product?

\section{Acknowledgements}
The authors would like to thank 
Richard Ehrenborg for
his comments on an earlier version of this paper and
Bruce Sagan for some historical background related to
representation theory.

%
%
%

\newcommand{\journal}[6]{{\sc #1,} #2, {\it #3} {\bf #4} (#5), #6.}
\newcommand{\book}[4]{{\sc #1,} ``#2,'' #3, #4.}
\newcommand{\bookf}[5]{{\sc #1,} ``#2,'' #3, #4, #5.}
\newcommand{\thesis}[4]{{\sc #1,} ``#2,'' Doctoral dissertation, #3, #4.}
\newcommand{\springer}[4]{{\sc #1,} ``#2,'' Lecture Notes in Math.,
                          Vol.\ #3, Springer-Verlag, Berlin, #4.}
\newcommand{\preprint}[3]{{\sc #1,} #2, preprint #3.}
\newcommand{\progress}[2]{{\sc #1,} #2, work in progress.}
\newcommand{\archive}[3]{{\sc #1,} #2, {\bf #3}.}
\newcommand{\unpublished}[1]{{\sc #1,} unpublished.}
\newcommand{\unpublisheddate}[2]{{\sc #1,} unpublished #2.}
\newcommand{\preparation}[2]{{\sc #1,} #2, in preparation.}
\newcommand{\appear}[3]{{\sc #1,} #2, to appear in {\it #3}.}
\newcommand{\submitted}[4]{{\sc #1,} #2, submitted to {\it #3}, #4.}
\newcommand{\AdvancesinMathematics}{Adv.\ Math.}
\newcommand{\DiscreteComputationalGeometry}{Discrete Comput.\ Geom.}
\newcommand{\DiscreteMath}{Discrete Math.}
\newcommand{\EuropeanJournalofCombinatorics}{European J.\ Combin.}
\newcommand{\JCTA}{J.\ Combin.\ Theory Ser.\ A}
\newcommand{\JCTB}{J.\ Combin.\ Theory Ser.\ B}
\newcommand{\JournalofAlgebraicCombinatorics}{J.\ Algebraic Combin.}
\newcommand{\communication}[1]{{\sc #1,} personal communication.}

\newcommand{\collection}[9]{{\sc #1,} #2, 
           in {\it #3} (#4), {\it #5},
           #6, #7, #8, #9.}


\vspace{.2in}

 \noindent
{\it
  Patricia Muldoon Brown,                        
  Department of  of Mathematics,                     
  Armstrong Atlantic State University,\\                        
  Savannah, GA            31419,
  {\tt patricia.brown@armstrong.edu}
\\[2 mm]
  Margaret A.\ Readdy,                          
  Department of Mathematics,                     
  University of Kentucky,                        
  Lexington, KY 40506,                      
  {\tt readdy@ms.uky.edu}
}

\end{document}